\numberwithin{equation}{section}
\numberwithin{figure}{section}
\theoremstyle{plain}
\newtheorem{thm}{Theorem}[section]
\newtheorem{lem}[thm]{Lemma}
\newtheorem{cor}[thm]{Corollary}
\theoremstyle{remark}
\newtheorem{rmk}[thm]{Remark}
\newcommand{\M}{\operatorname{T}}
\newcommand{\MM}{\operatorname{M}}
\newcommand{\wt}{\operatorname{wt}}
\newcommand{\PP}{\operatorname{PP}}
\begin{document}

\title[Ratio of tiling generating functions]{Ratio of tiling generating functions of semi-hexagons and quartered hexagons with dents}

\author{Tri Lai}
\address{Department of Mathematics, University of Nebraska -- Lincoln, Lincoln, NE 68588, U.S.A.}
\email{tlai3@unl.edu}
\thanks{This research was supported in part  by Simons Foundation Collaboration Grant (\# 585923).}

\subjclass[2010]{05A15,  05B45}

\keywords{perfect matchings, plane partitions, lozenge tilings, shuffling phenomenon}

\date{\today}

\dedicatory{}

\begin{abstract}
We consider the tiling generating functions of semi-hexagons and quartered hexagons with dents on their sides. In general, there are no simple product formulas for these generating functions. However, we show that the modification in the regions' width changes the tiling generating functions by only a simple multiplicative factor.  
\end{abstract}

\maketitle

\section{Introduction}\label{Sec:Intro}
 In general, even a small modification of a region would lead to an unpredictable change in its tiling number. However, in some situations, it changes the tiling number by only a simple multiplicative factor. The author and Rohatgi first observed this phenomenon for the doubly--dented hexagons and named it the ``Shuffling Phenomenon" for tilings \cite{shuffling}.

The first example of this phenomenon was recognized earlier, in 2018 when the author attended the JMM, San Diego. After discussing with Dennis Stanton about the tiling number of the `\emph{$S$-cored hexagon}' (a hexagon with a cluster of four triangles removed), the author found a striking pattern in the tiling number of the region when the side-lengths of the $S$-core are changed. This example was later generalized in \cite{HoleDent, HoleDent2, CLR}.
The phenomenon has been found in many different forms and different region families. We refer the reader to, e.g. \cite{shuffling2,shuffling3,Byun,Ful,Con,Ful2,Byun2}, for recent work about the phenomenon.

In this paper, we show several new instances of the shuffling phenomenon. In particular, we are investigating two new region families. The first family is a class of \emph{semi-hexagons}, i.e. upper halves of symmetric hexagons (see Figure \ref{Fig:Semitwodent}), and the second family consists of certain \emph{quartered hexagons}, i.e. halves of a symmetric semi-hexagon (see Figure \ref{Fig:Semitwodent5}). We show that the tiling generating functions of these regions change by only a simple multiplicative factor if we adjust the width while fixing the other parameters. A highlight of the result is that the tiling generating functions of these regions are \emph{not} given by simple product formulas themselves. 

We want to emphasize that most of the results in the field of enumeration of tilings are unweighted enumerations, i.e., `plain' counting. The weighted enumerations are \emph{very rare}. This paper is devoted to such rare enumerations. The unweighted version of Theorem 1.1 is independent found by Condon \cite{Con}. Strictly speaking, Condon investigates a different family of regions, hexagons with dents on two non-adjacent sides. However, his regions have the same tiling number as our regions. 

\begin{figure}\centering
\setlength{\unitlength}{3947sp}%
\begingroup\makeatletter\ifx\SetFigFont\undefined%
\gdef\SetFigFont#1#2#3#4#5{%
  \reset@font\fontsize{#1}{#2pt}%
  \fontfamily{#3}\fontseries{#4}\fontshape{#5}%
  \selectfont}%
\fi\endgroup%
\resizebox{!}{5cm}{
\begin{picture}(0,0)%
\includegraphics{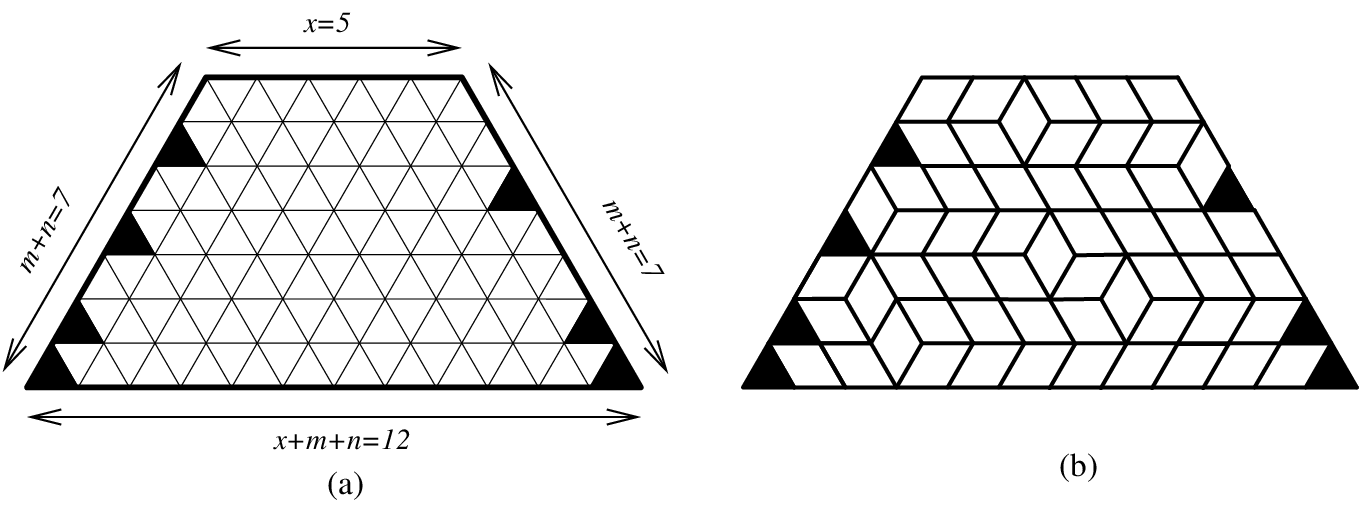}%
\end{picture}%

\begin{picture}(10888,4040)(602,-3652)
\put(1089,-2266){\makebox(0,0)[lb]{\smash{{\SetFigFont{14}{16.8}{\rmdefault}{\mddefault}{\itdefault}{\color[rgb]{1,1,1}$a_3$}%
}}}}
\put(886,-2604){\makebox(0,0)[lb]{\smash{{\SetFigFont{14}{16.8}{\rmdefault}{\mddefault}{\itdefault}{\color[rgb]{1,1,1}$a_4$}%
}}}}
\put(4599,-1186){\makebox(0,0)[lb]{\smash{{\SetFigFont{14}{16.8}{\rmdefault}{\mddefault}{\itdefault}{\color[rgb]{1,1,1}$b_1$}%
}}}}
\put(5206,-2259){\makebox(0,0)[lb]{\smash{{\SetFigFont{14}{16.8}{\rmdefault}{\mddefault}{\itdefault}{\color[rgb]{1,1,1}$b_2$}%
}}}}
\put(5424,-2626){\makebox(0,0)[lb]{\smash{{\SetFigFont{14}{16.8}{\rmdefault}{\mddefault}{\itdefault}{\color[rgb]{1,1,1}$b_3$}%
}}}}
\put(1921,-834){\makebox(0,0)[lb]{\smash{{\SetFigFont{14}{16.8}{\rmdefault}{\mddefault}{\itdefault}{\color[rgb]{1,1,1}$a_1$}%
}}}}
\put(1494,-1561){\makebox(0,0)[lb]{\smash{{\SetFigFont{14}{16.8}{\rmdefault}{\mddefault}{\itdefault}{\color[rgb]{1,1,1}$a_2$}%
}}}}
\end{picture}}
\caption{(a) A semi-hexagon with dents on two sides, and (b) a tiling of its. The black triangles indicate the unit triangles removed.}\label{Fig:Semitwodent}
\end{figure}

\begin{figure}\centering
\includegraphics[width=8cm]{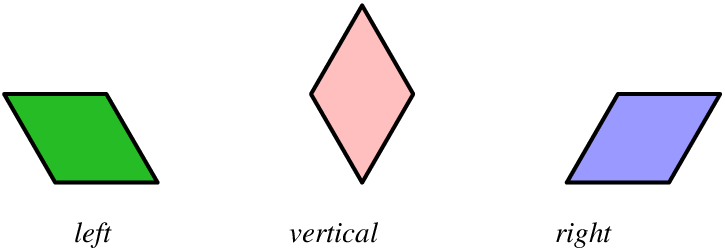}
\caption{Three possible orientations of the lozenges: \emph{left}, \emph{vertical}, and \emph{right} lozenges.}\label{Fig:orientation}
\end{figure}

\begin{figure}\centering
\includegraphics[width=15cm]{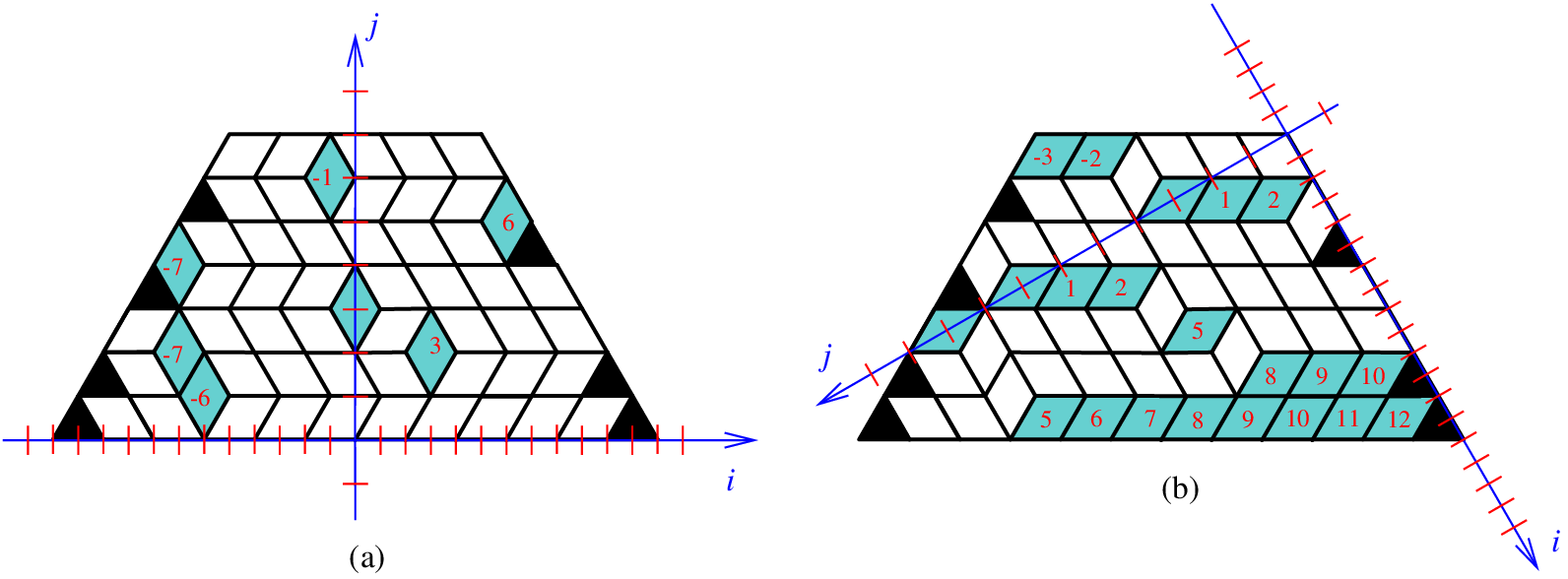}
\caption{Two weight assignments to the lozenges of a semi-hexagon with dents on two sides. The shaded lozenges passed by the $j$-axis are weighted by $\frac{X+Y}{2}$, the ones with label $k$ are weighted by  $\frac{Xq^{k}+Yq^{-k}}{2}$.}\label{Fig:weight}
\end{figure}

We now define in detail the semi-hexagon with dents on two sides. Consider a trapezoidal region of side-lengths $x,m+n,x+m+n,m+n$ (in counter-clockwise order, starting from the top\footnote{From now on, we always list the side-lengths of a region in this order.}) on the triangular lattice, as shown in \ref{Fig:Semitwodent}(a). Next, we remove $m$ up-pointing unit triangles on the left and $n$ unit triangles on the right sides. These removed triangles are called the `\emph{dents}'  and indicated by the black triangles in the figure. A \emph{lozenge} is an union of any two unit triangles that share an edge (see Figure \ref{Fig:orientation} for three possible orientations of the lozenges: \emph{vertical, left and right}). A \emph{lozenge tiling} of a region is a covering of the region by lozenges with no gaps or overlaps. See Figure \ref{Fig:Semitwodent}(b) for a tiling of a semi-hexagon with dents. In general, our dented semi-hexagon may not have any tiling (see Lemma \ref{tileability1} for the tile-ability of this region). Even when it has tilings, the number of tilings is not given by a nice product formula. 

We now consider a weight assignment for the vertical lozenges as follows.  We define a rectangular coordinate system with the horizontal $i$-axis running along the base of the semi-hexagon; the vertical $j$-axis is passing the middle point of the base. A unit on $i$-axis is equal to $1/2$ times the side-length of a lozenge, and a unit on $j$-axis is equal to $\sqrt{3}/2$ times the side-length of a lozenge. The vertical lozenge with center at the point $(i,j)$ is weighted by
 $\frac{Xq^{i}+Yq^{-i}}{2}$, where $X,Y,q$ are three indeterminates. The lozenges of different orientations (left, and right) are all weighted by $1$. We note that our lozenge-weights do \emph{not} depend on the $j$-ordinate. See Figure \ref{Fig:weight}(a) for an example. The \emph{weight} of a tilling is now the product of its lozenge-weights\footnote{From now on, we only define the weight assignment for the lozenges, and the weights of the tilings are obtained implicitly in this way.}.  This weight assignment is a special case of the \emph{elliptic weight} considered in \cite{Borodin}. Denote by $S_{x}(\textbf{a}, \textbf{b})=S_{x}((a_i)_{i=1}^{m}; (b_j)_{j=1}^{n})$ the resulting weighted region, where $\textbf{a}=(a_i)_{i=1}^{m}$ and $\textbf{b}=(b_j)_{j=1}^{n}$ are respectively the sequences of the left dents' positions and the right dents' positions (as they appear from top to bottom). 
 
 All regions considered in this paper are weighted regions. Strictly speaking, the a `\emph{weighted region}' is a pair $(R,\wt)$, where $R$ is an unweighted region on the triangular lattice, called the ``shape" of the region, and $\wt$ is a weight assignment for the tilings of $R$. We will see in the next part of the paper that there exist different weighted regions that have the same shape. Whenever the weight assignment is clearly given, we abuse the notation by viewing $R$ as the weighted region. In the rest of the paper, we use the notation $\M(R)$ for the weighted sum of all tilings of $R$. If $R$ does not have any tiling, then $\M(R)=0$. When $R$ is a degenerated region (i.e., a region with empty interior),
we set $\M(R)=1$ by convention. We call $\M(R)$ the \emph{tiling generating function} of $R$.

In general, the tiling generating function of the semi-hexagon $S_{x}(\textbf{a}, \textbf{b})$ is not given by a simple product formula. However, if we consider the ratio of the tiling generating functions of $S_{x}(\textbf{a}, \textbf{b})$ and its `sibling' $S_{y}(\textbf{a}, \textbf{b})$, then a magical cancelation happens. (Intuitively, $S_{y}(\textbf{a}, \textbf{b})$ is obtained by horizontal stretching or compressing $S_{x}(\textbf{a}, \textbf{b})$.) The ratio reduces to a nice product formula.

We often use the standard  $q$-Pochhammer symbol in our tiling formulas:
\begin{equation}
(x; q)_n:=
\begin{cases}1 &\text{if $n=0$;}\\
(1-x)(1-xq)\cdots(1-xq^{n-1}) &\text{if $n>0$;}\\
\frac{1}{(1-xq^{-1})(1-xq^{-2})\cdots(1-xq^{n})} &\text{if $n<0$.}\\
\end{cases}
\end{equation}
Strictly speaking, the   above $q$-Pochhammer  symbol is not well-defined when $n$ is a negative integer and $a=q^{k}$ for some $1\leq k \leq -n$. However, this is not the case in our paper.

We are now ready to state our first main theorem.



\begin{thm}\label{semithm1}
Assume that $x,y,m,n$ are non-negative integers, and $(a_i)_{i=1}^{m}$ and $(b_j)_{j=1}^{n}$ are two sequences of positive integers between $1$ and $m+n$. If $S_{x}((a_i)_{i=1}^{m}; (b_j)_{j=1}^{n})$ is tile-able, then we always have
\begin{align}\label{maineq1}
\frac{\M(S_{x}((a_i)_{i=1}^{m}; (b_j)_{j=1}^{n}))}{\M(S_{y}((a_i)_{i=1}^{m}; (b_j)_{j=1}^{n}))}&=q^{(y-x)\left(\sum_{i=1}^{m}a_i+\sum_{j=1}^{n}b_j-\frac{(m+n)(m+n+1)}{2}\right)}\frac{\PP_{q^2}(y,m,n)}{\PP_{q^2}(x,m,n)}\notag\\
&\times\prod_{i=1}^{m}\frac{(q^{2(x+i)};q^2)_{a_i-i}}{(q^{2(y+i)};q^2)_{a_i-i}}\prod_{j=1}^{n}\frac{(q^{2(x+j)};q^2)_{b_j-j}}{(q^{2(y+j)};q^2)_{b_j-j}},
\end{align}
where $\PP_q(a,b,c)$ is the generating function of  the plane partitions fitting in an $(a\times b \times c)$-box and given by MacMahon's celebrated formula for boxed plane partitions \cite{Mac}:
\begin{equation}
\PP_q(a,b,c)=\prod_{i=1}^{a}\prod_{j=1}^{b}\prod_{k=1}^{c}\frac{q^{i+j+k-1}-1}{q^{i+j+k-2}-1}.
\end{equation}
\end{thm}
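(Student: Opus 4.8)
The plan is to pass from lozenge tilings to non-intersecting lattice paths and then to a single determinant. Using the bijection between lozenge tilings of a semi-hexagon and families of non-intersecting paths (as in Cohn--Larsen--Propp \cite{CLP}), a tiling of $S_x(\mathbf a,\mathbf b)$ is encoded by a family of non-intersecting paths whose sources and sinks are read off from the base, the top of length $x$, and the dent positions $(a_i)$ and $(b_j)$; the weight of a path is the product of the weights of the vertical lozenges it traverses. The decisive structural feature is that our weight $\frac{Xq^i+Yq^{-i}}2$ depends on the horizontal coordinate $i$ only and not on $j$. Consequently the weighted generating function of a \emph{single} path between a fixed source and a fixed sink is a complete homogeneous symmetric polynomial in the ``elliptic'' values $w(c)=\frac{Xq^{c}+Yq^{-c}}2$ taken over the admissible columns $c$. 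By the Lindstr\"om--Gessel--Viennot lemma, $\M(S_x(\mathbf a,\mathbf b))$ is therefore the determinant of a matrix of such polynomials, i.e.\ a Jacobi--Trudi (bialternant) expression --- a Schur-type function evaluated at the points $w(c)$. Because the $w(c)$ are \emph{not} a geometric progression (they are a sum of two geometric progressions), this Schur-type function is not a simple product, which accounts for the absence of a closed product formula for $\M(S_x(\mathbf a,\mathbf b))$ itself.

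Next I would compute the ratio by writing both determinants in bialternant form, as a quotient of an alternant by a Vandermonde in the variables $w(c)$. The observation driving the cancellation is that enlarging the width from $x$ to $y$ shifts the relevant column indices by $y-x$, and on the weights this shift acts exactly as the substitution $X\mapsto Xq^{\,y-x}$, $Y\mapsto Yq^{-(y-x)}$, since $w(c+\delta)=\frac{(Xq^{\delta})q^{c}+(Yq^{-\delta})q^{-c}}2$; equivalently, writing $w(c)=\frac{Y}{2}q^{-c}\bigl(1+\tfrac{X}{Y}q^{2c}\bigr)$ makes the shift transparent at the level of the two-term factors. I would then perform row and column operations on the alternants to strip off the explicit $x$- and $y$-dependent factors. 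In this step the dent-free ``backbone'' of the region contributes the factor that evaluates, via MacMahon's box formula, to $\PP_{q^2}(x,m,n)$ (respectively $\PP_{q^2}(y,m,n)$), while each dent at position $a_i$ or $b_j$ contributes a telescoping string of two-term factors that collapses into the $q$-Pochhammer symbols $(-q^{2(x+i)};q^2)_{a_i-i}$ and $(-q^{2(x+j)};q^2)_{b_j-j}$; the leftover monomials in $q^{-c}$ assemble into the prefactor $q^{(y-x)(\sum_i a_i+\sum_j b_j-\binom{m+n+1}{2})}$.

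The hard part, and the crux of the argument, will be showing that \emph{all} dependence on $X$ and $Y$ cancels in the ratio --- equivalently, that the non-product (``transcendental'') factor of the bialternant is independent of the width, so that only the explicit $q$-prefactors distinguish $\M(S_x)$ from $\M(S_y)$. A direct check with the one-variable case ($\lambda=(1)$) shows this fails for a naive single shift of the evaluation points, so the cancellation must use that changing $x$ alters the index set \emph{and} the underlying partition simultaneously in a matched way. I would control this either by a Vandermonde-type evaluation of the alternant $\det\!\bigl(w(c_i)^{\,e_j}\bigr)$ at the points $w(c)=\frac{Xq^{c}+Yq^{-c}}2$, exploiting the palindromic symmetry ($c\leftrightarrow -c$, $X\leftrightarrow Y$) that these ``$Rq^{c}+R^{-1}q^{-c}$'' variables inherit from orthogonal/symplectic character theory, or, more cleanly, by reducing to the one-step case $y=x+1$ and telescoping: proving the single identity for $\M(S_{x+1})/\M(S_x)$ and iterating removes most of the bookkeeping and isolates the cancellation to a rank-one modification of the path endpoints, where the $X,Y$-independence is easiest to verify. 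Throughout, the tileability hypothesis (Lemma \ref{tileability1}) guarantees that the source/sink data is admissible, so that the determinants are indeed the genuine tiling generating functions.
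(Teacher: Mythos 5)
Your LGV/Jacobi--Trudi setup is a sensible way to \emph{organize} the generating function, but the proposal never actually proves the theorem: the crux --- that the non-product (``transcendental'') factor of the bialternant is independent of the width, equivalently that all $X,Y$-dependence cancels in the ratio --- is precisely the step you defer, offering two unexecuted strategies in its place. Reducing to $y=x+1$ and telescoping buys nothing, since the one-step identity carries the full difficulty. Worse, the mechanism you propose to drive the cancellation is incorrect. Widening the top from $x$ to $y$ does \emph{not} translate the column set uniformly: the $j$-axis is anchored at the midpoint of the base, which grows from $x+m+n$ to $y+m+n$, so the left dents' columns move by $-(y-x)$ while the right dents' columns move by $+(y-x)$ (in the half-unit $i$-coordinate), and every path connecting a right dent to the left side acquires additional admissible columns in between. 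Hence the substitution $X\mapsto Xq^{\,y-x}$, $Y\mapsto Yq^{-(y-x)}$ does not describe the passage from $S_x$ to $S_y$, and the ``telescoping string of two-term factors'' collapsing into $(-q^{2(x+i)};q^2)_{a_i-i}$ is asserted, not derived. Your own observation that the naive one-variable check fails should be read as a sign that this is where a genuinely new idea is needed, not bookkeeping.

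For contrast, the paper never evaluates either determinant. It proves (\ref{maineq1}) by induction on the statistic $p=m+2n+t$, where $t$ measures how far the $b$-dents sit from the base. In the inductive step it applies Kuo condensation (Lemma \ref{kuothm2}) to the region obtained by filling the dents at $a_l$ and $b_1$, producing the three-term quadratic recurrence (\ref{recur1}) among six dented semi-hexagons of strictly smaller $p$-statistic, and then checks that the conjectured right-hand side satisfies the same recurrence because the three relevant products of $f$-factors coincide ($A=B=C$). A closed formula is needed only in the base case $t=0$, where forced lozenges reduce the region to a semi-hexagon with dents on the base and Lemma \ref{semilem1} (the elliptic product formula of Borodin type) applies --- which is exactly the one-sided-dent situation where an alternant evaluation of the kind you want is actually available. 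To complete your route you would in effect need a two-sided analogue of that evaluation, which is not known to exist in product form; the ratio statement is strictly weaker than a product evaluation, and the condensation--induction scheme is what exploits that weakness.
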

We note that the ratio of generating functions in (\ref{maineq1}) does \emph{not} depend on the indeterminates $X$ and $Y$ in the weight assignment of the region $S_{x}((a_i)_{i=1}^{m}; (b_j)_{j=1}^{n})$.

We now re-assign weights to the right lozenges of the semi-hexagon as in Figure \ref{Fig:weight}(b). In particular, the right lozenge with center at $(i,j)$ is weighted by $\frac{Xq^{i}+Yq^{-i}}{2}$; all the left and vertical lozenges now have weight $1$.  Denote by $S'_{x}((a_i)_{i=1}^{m}; (b_j)_{j=1}^{n})$ the new weighted  region. In other words, $S_x(\textbf{a},\textbf{b})$ and $S'_x(\textbf{a},\textbf{b})$ have the same shape, but different weight assignments. The ratio of the tiling generating functions of $S'_x(\textbf{a},\textbf{b})$ and $S'_y(\textbf{a},\textbf{b})$ is also given  by a simple product formula. It is, in fact, very similar to that in the previous theorem.

\begin{thm}\label{semithm2}
Assume that $x,y,m,n$ are non-negative integers with $y\geq x$, and $(a_i)_{i=1}^{m}$ and $(b_j)_{j=1}^{n}$ are two sequences of positive integers between $1$ and $m+n$. If $S'_{x}((a_i)_{i=1}^{m}; (b_j)_{j=1}^{n})$ is tile-able, then
\begin{align}\label{maineq2}
\frac{\M(S'_{x}((a_i)_{i=1}^{m}; (b_j)_{j=1}^{n}))}{\M(S'_{y}((a_i)_{i=1}^{m}; (b_j)_{j=1}^{n}))}&=q^{\frac{n}{2}(y^2-x^2)+(y-x)\left(\sum_{i=1}^{n}b_j-\frac{1}{2}m^2-\frac{1}{2}n^2-mn+2n\right)}\frac{\PP_{q^2}(y,m,n)}{\PP_{q^2}(x,m,n)}\notag\\
&\times\prod_{j=1}^{n}\prod_{i=1}^{y-x}\left(\frac{X+q^{2(x+i-b_j)}Y}{2}\right)\prod_{i=1}^{m}\frac{(q^{2(x+i)};q^2)_{a_i-i}}{(q^{2(y+i)};q^2)_{a_i-i}}\prod_{j=1}^{n}\frac{(q^{2(x+j)};q^2)_{b_j-j}}{(q^{2(y+j)};q^2)_{b_j-j}}.
\end{align}
\end{thm}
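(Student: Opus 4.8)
The plan is to prove Theorem \ref{semithm2} by comparing the two weightings $S_{x}(\textbf{a},\textbf{b})$ and $S'_{x}(\textbf{a},\textbf{b})$, which sit on the same underlying shape and therefore share the same set of tilings, and then importing as much as possible from Theorem \ref{semithm1}. First I would pass to the non-intersecting lattice-path model of Cohn--Larsen--Propp \cite{CLP}: a tiling of the semi-hexagon is encoded by a family of non-intersecting paths whose endpoints are fixed by the dent positions $\textbf{a},\textbf{b}$ and by the width, where horizontal steps record the vertical lozenges and diagonal steps record the right lozenges. Under this dictionary the weighting (a) puts the factor $\tfrac{Xq^{i}+Yq^{-i}}{2}$ on each horizontal step at abscissa $i$, while the weighting (b) transfers an analogous factor onto the diagonal (right-lozenge) steps. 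The reason for routing through paths is that the weighted sum over all single paths between a fixed source and sink is a genuine product --- a $q$-Pochhammer symbol of the type $(-q^{2(\cdot)};q^{2})_{\cdot}$ --- by the $q$-binomial theorem applied to the geometric progression of step-weights.

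Next I would apply the Lindstr\"om--Gessel--Viennot lemma to write $\M(S'_{x}(\textbf{a},\textbf{b}))=\det M^{(x)}$, where each entry $M^{(x)}_{k,\ell}$ is one of these single-path product evaluations. The crucial step --- the ``magical cancellation'' alluded to in the introduction --- is to factor $M^{(x)}_{k,\ell}=c_k(x)\,N_{k,\ell}$, collecting all of the $x$-dependence into a row factor $c_k(x)$ while the reduced matrix $N=(N_{k,\ell})$ is independent of $x$. Then $\M(S'_{x})=\big(\prod_k c_k(x)\big)\det N$, the $x$-independent determinant $\det N$ cancels in the ratio $\M(S'_{x})/\M(S'_{y})$, and the theorem reduces to the explicit evaluation of $\prod_k c_k(x)/c_k(y)$. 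Since the two dent families enter $S_x$ and $S'_x$ through the same path endpoints, I expect the two $q$-Pochhammer products over the $a_i$ and $b_j$ and the MacMahon box factor $\PP_{q^{2}}(x,m,n)$ \cite{Mac} to be inherited unchanged from Theorem \ref{semithm1}, so that the only genuinely new contribution is the one produced by moving the weight onto the right lozenges.

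Finally I would isolate that new contribution. By telescoping it suffices to treat $y=x+1$, so that widening lengthens each of the $n$ right-hand paths by a single diagonal step forced into the new column adjacent to the dent $b_j$; the forced right lozenges in the new columns, each carrying a weight $\tfrac12(Xq^{\,\cdot}+Yq^{-\,\cdot})$, together with the change of normalization inherited from weighting (a) on the affected cells, combine to produce exactly $\prod_{j}\prod_{i}(X^{2}+q^{2(x+i-b_j)}XY)$, the residual power of $q$ being absorbed into the prefactor $q^{\frac n2(y^{2}-x^{2})+(y-x)(\cdots)}$. I expect the main obstacle to be precisely this bookkeeping: proving that the reduced determinant $\det N$ really is $x$-independent (so that it cancels), and then matching the surviving $X,Y$-dependent factor and the quadratic $q$-exponent term by term --- including the half-integer abscissae of the right lozenges, the $\tfrac12$ normalizations, and the leading $X$ in each factor $X^{2}+q^{2(x+i-b_j)}XY=X\,(X+q^{2(x+i-b_j)}Y)$ --- while keeping track of tileability through Lemma \ref{tileability1} and the degenerate conventions for $\M$.
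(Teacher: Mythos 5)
Your proposal takes a route (Lindstr\"om--Gessel--Viennot determinant plus a row-by-row factorization of the $x$-dependence) that is genuinely different from the paper's, which proves Theorem \ref{semithm2} exactly as it proves Theorem \ref{semithm1}: induction on the statistic $p=m+2n+t$, with Kuo condensation (Lemma \ref{kuothm2}) producing a three-term recurrence satisfied by both sides of the identity, and with the base case $t=0$ supplied by the explicit product formula of Lemma \ref{semilem2} for the semi-hexagon $S'_{a,b}$ with dents on the base. Unfortunately, the step you yourself flag as ``the main obstacle'' is a genuine gap, and I do not believe it can be closed in the form you state. Your premise is that the weighted sum over single paths between a fixed source and sink is a $q$-Pochhammer product ``by the $q$-binomial theorem.'' That is true for one-parameter geometric step-weights $xq^{i}$, but here a vertical lozenge at abscissa $i$ carries the two-parameter weight $\frac{Xq^{i}+Yq^{-i}}{2}$, so the single-path generating function is a complete homogeneous symmetric polynomial in these weights; already $h_{2}\bigl(\tfrac{Xq+Yq^{-1}}{2},\tfrac{Xq^{2}+Yq^{-2}}{2}\bigr)$ fails to split into linear factors in $X,Y$ over $\mathbb{Q}(q)$ (its discriminant as a quadratic form is $3\bigl(4-(q+q^{-1})^{2}\bigr)$, not a square). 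Consequently the matrix entries $M^{(x)}_{k,\ell}$ are not products, and there is no visible way to peel off a scalar $c_{k}(x)$ per row leaving an $x$-independent reduced matrix. The theorem does assert that the \emph{ratio} of the two determinants is a product, but that is the conclusion, not something your mechanism delivers.

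Two further points. First, your telescoping picture for $y=x+1$ is not accurate: enlarging the width adds a column, but the paths are not forced to place a prescribed step in that column --- the vertical lozenges redistribute freely --- so the factor $\prod_{j}\prod_{i}(X^{2}+q^{2(x+i-b_{j})}XY)$ cannot be read off as the weight of forced lozenges in the new strip. Second, the paper itself signals that a direct CLP/Gelfand--Tsetlin or determinant explanation is \emph{not} known for these regions (the introduction notes that the Byun--Fulmek argument covers only the doubly-dented hexagons, and the concluding remark explicitly hopes for a combinatorial explanation of the present theorems); this is consistent with the factorization you need being unavailable. If you want a working proof along the paper's lines, the ingredients are: Lemma \ref{semilem2} for the clustered base case, the tile-ability criterion of Lemma \ref{tileability1}, Kuo condensation to generate the recurrence, and the routine verification that the product side of (\ref{maineq2}) satisfies the same recurrence (the analogue of the identity $A=B=C$), where the extra factor $\prod_{j=1}^{n}\prod_{i=1}^{y-x}(X^{2}+q^{2(x+i-b_{j})}XY)$ and the quadratic exponent $\frac{n}{2}(y^{2}-x^{2})$ enter through Lemma \ref{semilem2} rather than through any forced-lozenge count.
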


\begin{figure}\centering
\setlength{\unitlength}{3947sp}%
\begingroup\makeatletter\ifx\SetFigFont\undefined%
\gdef\SetFigFont#1#2#3#4#5{%
  \reset@font\fontsize{#1}{#2pt}%
  \fontfamily{#3}\fontseries{#4}\fontshape{#5}%
  \selectfont}%
\fi\endgroup%
\resizebox{!}{7cm}{
\begin{picture}(0,0)%
\includegraphics{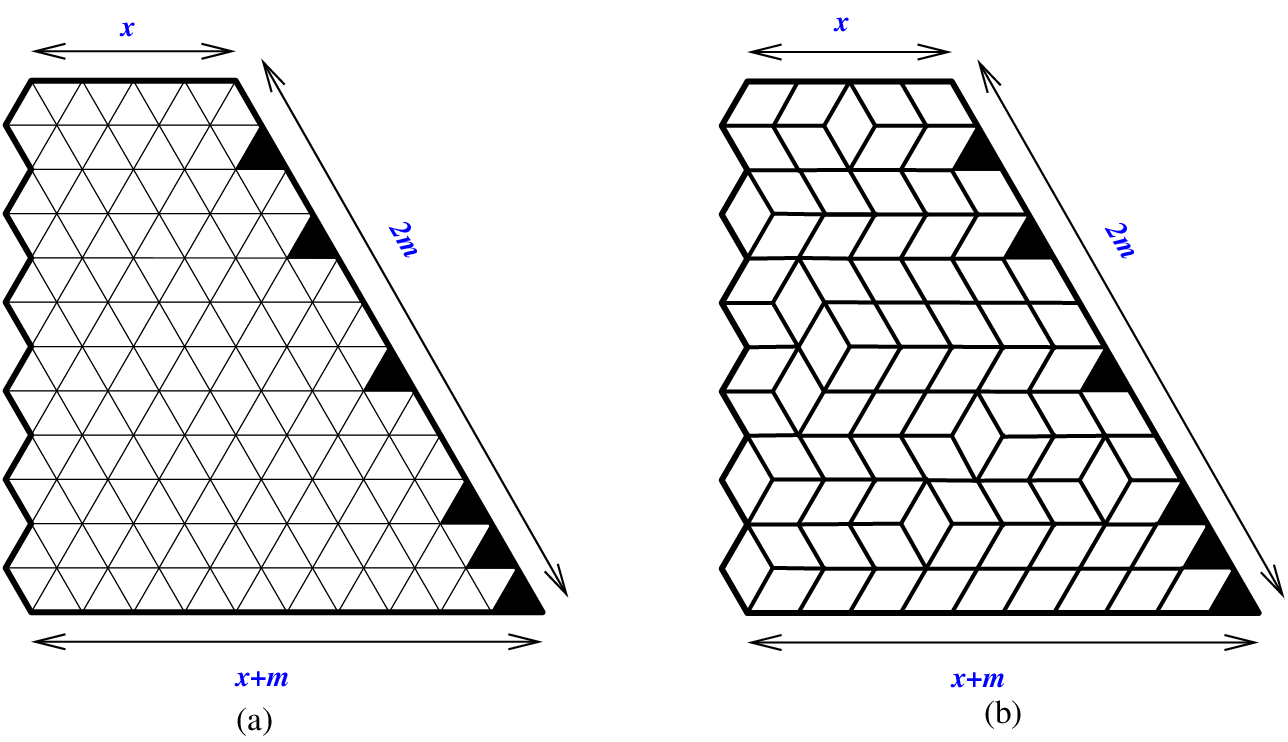}%
\end{picture}%
%
%
\begin{picture}(10296,5933)(775,-5277)
\put(4599,-3781){\makebox(0,0)[lb]{\smash{{\SetFigFont{14}{16.8}{\rmdefault}{\bfdefault}{\updefault}{\color[rgb]{1,1,1}$a_5$}%
}}}}
\put(8903,-1320){\makebox(0,0)[lb]{\smash{{\SetFigFont{14}{16.8}{\rmdefault}{\bfdefault}{\updefault}{\color[rgb]{1,1,1}$a_2$}%
}}}}
\put(9503,-2385){\makebox(0,0)[lb]{\smash{{\SetFigFont{14}{16.8}{\rmdefault}{\bfdefault}{\updefault}{\color[rgb]{1,1,1}$a_3$}%
}}}}
\put(10118,-3435){\makebox(0,0)[lb]{\smash{{\SetFigFont{14}{16.8}{\rmdefault}{\bfdefault}{\updefault}{\color[rgb]{1,1,1}$a_4$}%
}}}}
\put(10328,-3787){\makebox(0,0)[lb]{\smash{{\SetFigFont{14}{16.8}{\rmdefault}{\bfdefault}{\updefault}{\color[rgb]{1,1,1}$a_5$}%
}}}}
\put(10538,-4132){\makebox(0,0)[lb]{\smash{{\SetFigFont{14}{16.8}{\rmdefault}{\bfdefault}{\updefault}{\color[rgb]{1,1,1}$a_6$}%
}}}}
\put(8483,-607){\makebox(0,0)[lb]{\smash{{\SetFigFont{14}{16.8}{\rmdefault}{\bfdefault}{\updefault}{\color[rgb]{1,1,1}$a_1$}%
}}}}
\put(2754,-601){\makebox(0,0)[lb]{\smash{{\SetFigFont{14}{16.8}{\rmdefault}{\bfdefault}{\updefault}{\color[rgb]{1,1,1}$a_1$}%
}}}}
\put(3174,-1314){\makebox(0,0)[lb]{\smash{{\SetFigFont{14}{16.8}{\rmdefault}{\bfdefault}{\updefault}{\color[rgb]{1,1,1}$a_2$}%
}}}}
\put(3774,-2379){\makebox(0,0)[lb]{\smash{{\SetFigFont{14}{16.8}{\rmdefault}{\bfdefault}{\updefault}{\color[rgb]{1,1,1}$a_3$}%
}}}}
\put(4389,-3429){\makebox(0,0)[lb]{\smash{{\SetFigFont{14}{16.8}{\rmdefault}{\bfdefault}{\updefault}{\color[rgb]{1,1,1}$a_4$}%
}}}}
\put(4809,-4126){\makebox(0,0)[lb]{\smash{{\SetFigFont{14}{16.8}{\rmdefault}{\bfdefault}{\updefault}{\color[rgb]{1,1,1}$a_6$}%
}}}}
\end{picture}}
\caption{(a) A quartered with dents on the right side and (b) a tiling of its. The black triangles indicate the unit triangles removed.}\label{Fig:Semitwodent5}
\end{figure}

\begin{figure}\centering
\includegraphics[width=13cm]{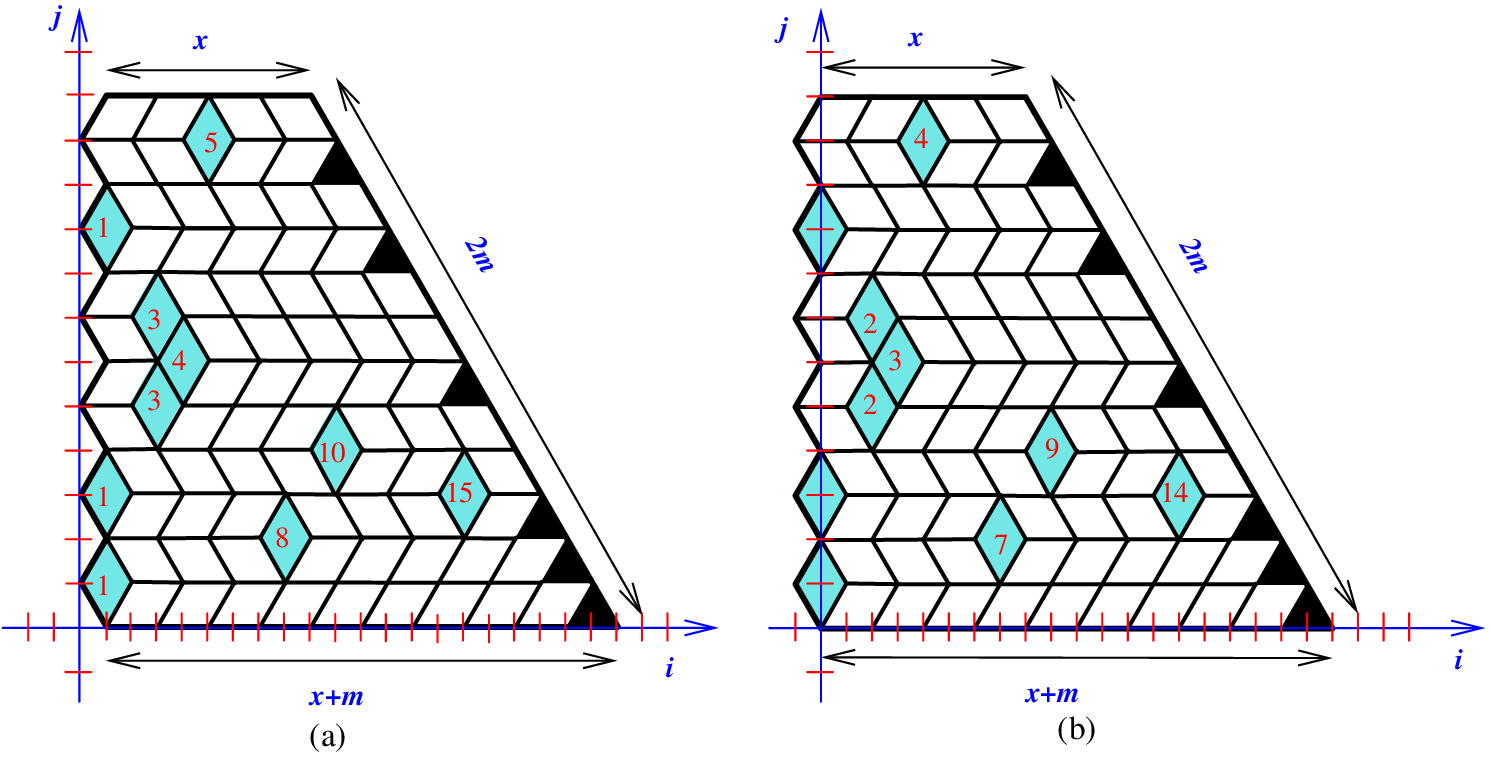}%
\caption{Assigning weights to the lozenges of a quartered hexagon. The shaded lozenges passed by the $j$-axis are weighted by $\frac{1}{2}$; the ones with label $n$ are weighted by  $\frac{q^{n}+q^{-n}}{2}$.}\label{Fig:weight2}
\end{figure}

We consider next a new family of regions called ``\emph{quartered hexagons}" with dents on the right side, as shown in Figure \ref{Fig:Semitwodent5}. In particular, we consider a right trapezoidal region of side-lengths $x,2m,x+m,2m$. The vertical left side runs along a zigzag path with $2m$ steps. We remove $m$ up-pointing unit triangles from the right side of the region at the positions $a_1,a_2,\dots,a_m$, from top to bottom. We also assign the weights to vertical lozenges of the region, as in Figure \ref{Fig:weight2}(a). In particular, the $j$-axis is touching the right side of the region, and the $i$-axis runs along the base. The vertical lozenge with the center at the point $(i,j)$ is weighted by $\frac{q^{i}+q^{-i}}{2}$. (The weight of a tiling is still the product of its lozenge-weights as usual.) Denote by $Q_{x}((a_i)_{i=1}^{m})$ this weighted region. Similar to the case of Theorems \ref{semithm1} and \ref{semithm2}, the ratio of tiling generating functions of 
$Q_{x}((a_i)_{i=1}^{m})$ and its sibling $Q_{y}((a_i)_{i=1}^{m})$ is always given by a simple product formula (even though, each tilling generating function is not a simple product).

\begin{thm}\label{halfthm1}
For non-negative integers $x,y,m$ and a sequence $\textbf{a}=(a_i)_{i=1}^{m}$ of positive integers between $1$ and $2m$, we have
\begin{align}\label{halfeq1}
\frac{\M(Q_{x}((a_i)_{i=1}^{m}))}{\M(Q_{y}((a_i)_{i=1}^{m}))}&=q^{2(y-x)(\sum_{i=1}^{m}a_i- m^2)}\prod_{i=1}^{m}\frac{(q^{2(2y+a_i+1)};q^2)_{2i-a_i-1}}{(q^{2(2x+a_i+1)};q^2)_{2i-a_i-1}}
\end{align}
whenever $Q_{x}((a_i)_{i=1}^{m})$ is tile-able.
\end{thm}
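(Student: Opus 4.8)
The plan is to reduce the weighted enumeration of $Q_{x}(\mathbf{a})$ to a determinant via the Lindström--Gessel--Viennot (LGV) lemma, and then to extract the ratio \eqref{halfeq1} by controlling how that determinant depends on the width. First I would translate tilings of $Q_{x}(\mathbf{a})$ into families of $m$ non-intersecting lattice paths using the correspondence of Cohn, Larsen, and Propp \cite{CLP}: each tiling is encoded by paths whose steps record the non-vertical lozenges, and the weight $\tfrac{q^{i}+q^{-i}}{2}$ of a vertical lozenge at abscissa $i$ becomes the weight of the corresponding unit segment. The crucial geometric input is the zigzag left side together with the axis weighting in Figure \ref{Fig:weight2}: because the $j$-axis runs along the right side and the weight is even in $i$, the region behaves like the half of a region symmetric across the $j$-axis. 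I would first verify that the factor $\tfrac12$ attached to the axis lozenges is exactly the normalization making the path model compatible with reflecting $Q_{x}(\mathbf{a})$ across the $j$-axis, so that a reflection-principle argument applies to the paths. This reflection is what will ultimately produce the ``doubled'' parameter $2x$ and the complementary lengths $2i-a_i-1$ appearing in \eqref{halfeq1}.

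Second, applying the (reflected) LGV lemma gives $\M(Q_{x}(\mathbf{a}))=\det\big(f_{x}(i,j)\big)_{1\le i,j\le m}$, where $f_{x}(i,j)$ is the generating function of a single weighted path constrained by the wall at the $j$-axis, and hence has the ``$-$reflected'' form $g_{x}(a_i,j)-g_{x}(a_i,-j)$ typical of type-$C$ path families. Since a single path accumulates a product of factors $\tfrac{q^{\ell}+q^{-\ell}}{2}=\tfrac12 q^{-\ell}(1+q^{2\ell})$ over the abscissas $\ell$ of its vertical steps, summing over all admissible paths collapses to a $q$-Pochhammer expression of the type $(-q^{2\,\cdot};q^{2})_{\cdot}$, matching the shape of the factors on the right-hand side of \eqref{halfeq1}. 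The width $x$ enters $f_{x}(i,j)$ only through the horizontal range available to the paths, i.e.\ through a uniform shift $\ell\mapsto\ell+(y-x)$ of all abscissas when passing from $Q_{x}$ to $Q_{y}$.

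Third, I would compute the ratio $\det(f_{y})/\det(f_{x})$. Because the shift $\ell\mapsto\ell+(y-x)$ is not a geometric rescaling of a single row or column (the weight $\tfrac{q^{\ell}+q^{-\ell}}{2}$ is not a geometric progression), the cleanest route is induction on $|y-x|$: I would show that enlarging the width by one unit appends a single column of lozenges, multiplying $\M$ by an explicit factor of the form $q^{2(\sum_i a_i-m^{2})}\prod_{i}(\cdots)$ coming from a one-column determinant update, and then iterate. Equivalently, one factors the genuinely $x$-dependent prefactors out of the rows of $(f_{x}(i,j))$ and argues that the remaining ``core'' determinant is independent of $x$, so that it cancels in the ratio; the extracted prefactors then assemble into the power of $q$ and the product of $q$-Pochhammer quotients in \eqref{halfeq1}. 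This mirrors the mechanism behind Theorem \ref{semithm1}, and I would expect to reuse the same $q$-Pochhammer row/column reductions there.

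The main obstacle I anticipate is precisely this determinant step: one must show that \emph{all} of the $x$-dependent content of $\det(f_{x}(i,j))$ separates as a product over the single index $i$, even though the entries couple $i$ and $j$ and $\M(Q_{x}(\mathbf{a}))$ itself admits no product formula. The additional difficulty specific to the quartered case is the bookkeeping forced by the reflection across the $j$-axis, namely the appearance of $2x$ rather than $x$ in the bases and the complementary lengths $2i-a_i-1$ in place of the $a_i-i$ of Theorem \ref{semithm1}. I would handle this by tracking the symmetric pairing of off-axis lozenges (each contributing the square of its half-weight) and the separate $\tfrac12$-weighting of the axis lozenges consistently throughout the path model, so that the reflected single-path generating functions $g_{x}(a_i,j)-g_{x}(a_i,-j)$ carry the correct $q$-Pochhammer data before the determinant ratio is evaluated.
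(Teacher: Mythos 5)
Your approach is genuinely different from the paper's: the paper proves \eqref{halfeq1} by induction on the statistic $2m-t$ (where $t$ is the size of the maximal dent cluster at the lower-right corner), using Kuo condensation (Lemma \ref{kuothm1}) to show that both sides of the identity satisfy the same three-term recurrence, with the halved-hexagon evaluation of Lemma \ref{proctorlem} supplying the base case $t=m$ and forced-lozenge/region-splitting reductions handling $a_1\le 2$ and $t\le 1$. No determinant or nonintersecting-path model appears anywhere in that argument.

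Your proposal, however, has a genuine gap at its central step --- exactly the step you flag as ``the main obstacle.'' Passing from $Q_{x}(\mathbf{a})$ to $Q_{y}(\mathbf{a})$ does not act on the LGV matrix by rescaling rows: the entry $f_{x}(i,j)$ is a sum over single paths whose vertical steps carry the non-geometric weights $\frac{q^{\ell}+q^{-\ell}}{2}$, and enlarging the width changes the admissible step positions of every path simultaneously, so there is no identity of the form $f_{x+1}(i,j)=c_{i}\,f_{x}(i,j)$; if there were, each $\M(Q_{x}(\mathbf{a}))$ would itself factor as a product, which the paper emphasizes is false. Likewise, appending one column to the region is not a forced-lozenge operation, so it does not multiply $\M$ by an explicit factor; the assertion that it does is precisely the statement of the theorem for $y=x+1$, making your induction on $|y-x|$ circular as written. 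To rescue the approach you would need an actual mechanism for why the ratio of two non-factoring determinants factors --- for instance, a column-operation identity expressing $(f_{x+1}(i,j))_{j}$ in terms of $(f_{x}(i,j))_{j}$ via a triangular change of basis with computable diagonal --- and none is supplied. A secondary issue: the paths end freely on the zigzag left side (at any odd step), so this is not a fixed-endpoint LGV configuration; one must sum over endpoint sets or invoke a reflection/Pfaffian variant, and since the $j$-axis in Figure \ref{Fig:weight2}(a) runs along the dented right side rather than along the reflection axis, the type-$C$ ansatz $g_{x}(a_i,j)-g_{x}(a_i,-j)$ also needs justification.
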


Next, we consider a variation of the quartered region above. The new weighted region has the same shape as the one in Theorem \ref{halfthm1}. The only difference is in the lozenge-weights. We now re-assign the weights to the lozenges, as in Figure \ref{Fig:weight2}(b).  One should note that the $j$-axis is now slightly to the right of that in Figure \ref{Fig:weight2}(a). As a consequence, our region now has some vertical lozenges intersected by the $j$-axis. The vertical lozenge with center at the point $(i,j)$ in the new coordinate system is still weighted by $\frac{q^{i}+q^{-i}}{2}$, with one exception: the vertical lozenges intersected by the $j$-axis are weighted by $1/2$ (\emph{not} by $1=\frac{q^{0}+q^{-0}}{2}$). Denote by $Q'_{x}((a_i)_{i=1}^{m})$ the new weighted region.

\begin{thm}\label{halfthm2}
Assume that $x,y,m$ are non-negative integers and that $(a_i)_{i=1}^{m}$ is a sequence of positive integers between $1$ and $2m$. Then
\begin{align}\label{halfeq2}
\frac{\M(Q'_{x}((a_i)_{i=1}^{m}))}{\M(Q'_{y}((a_i)_{i=1}^{m}))}&=q^{2(y-x)(\sum_{i=1}^{m}a_i- m^2)}\prod_{i=1}^{m}\frac{(q^{2(2y+a_i)};q^2)_{2i-a_i-1}}{(q^{2(2x+a_i)};q^2)_{2i-a_i-1}}
\end{align}
if $Q'_{x}((a_i)_{i=1}^{m})$ is tile-able.
\end{thm}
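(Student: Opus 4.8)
The plan is to compute the tiling generating function $\M(Q'_x((a_i)_{i=1}^m))$ from scratch and then form the quotient, rather than to deduce it from Theorem \ref{halfthm1}. It is tempting to use that $Q'_x$ and $Q_x$ have the same shape, so that their tilings coincide as a set and only the weights differ; but the per-tiling weight ratio $\wt_{Q'_x}(T)/\wt_{Q_x}(T)$ depends on which columns the vertical lozenges of $T$ occupy, and that distribution is not a tiling invariant, so no single scalar factors out. I would therefore rebuild the generating function, following the argument behind Theorem \ref{halfthm1}.

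First I would pass to non-intersecting lattice paths. Under the standard correspondence between lozenge tilings and families of monotone paths, a tiling of $Q'_x$ is encoded by an $m$-tuple of non-intersecting paths whose sources are fixed by the zigzag left side and whose sinks are fixed by the dents $a_1,\dots,a_m$. Since the vertical-lozenge weight $\tfrac{q^{i}+q^{-i}}{2}=\tfrac{1+q^{2i}}{2q^{i}}$ depends only on the column $i$ and not on $j$, the weight of a single path factors as a product of its column weights, and the weighted sum over single paths from a fixed source to a fixed sink evaluates to an explicit $q$-product; the numerator factors $\prod(1+q^{2i})$ that arise are precisely the symbols $(-q^{2(2x+a_i)};q^2)_{2i-a_i-1}$ of \eqref{halfeq2}. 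By the Lindstr\"{o}m--Gessel--Viennot lemma, $\M(Q'_x)=\det(M'_x)$ for the matrix $M'_x$ of these single-path generating functions, and likewise $\M(Q'_y)=\det(M'_y)$.

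The crux of why the ratio is a clean product, although neither $\M(Q'_x)$ nor $\M(Q'_y)$ is, is that the width $x$ enters $M'_x$ only through multiplicative factors attached to the rows (equivalently the path sources), over a fixed $x$-independent ``core''. Thus $\det(M'_x)/\det(M'_y)$ collapses to the product of the row-factor ratios, the core determinant cancelling. I would carry out this factorization exactly as in the proof of \eqref{halfeq1}, noting that the only change caused by moving the $j$-axis one unit to the right is the uniform substitution $i\mapsto i-1$ in every column weight: this replaces each Pochhammer base $2(2x+a_i+1)$ by $2(2x+a_i)$, while the collected $q$-powers reassemble into the prefactor $q^{2(y-x)(\sum_i a_i-m^2)}$.

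The genuinely new feature, and the step I expect to be the main obstacle, is the exceptional weight $\tfrac12$ assigned to the vertical lozenges crossed by the shifted $j$-axis --- those sitting in the column where the naive formula would give $\tfrac{q^{0}+q^{-0}}{2}=1$ --- which breaks the clean $i\mapsto i-1$ pattern at exactly one column. I would resolve it by proving that the number of vertical lozenges met by the $j$-axis is the same for every tiling and, crucially, independent of the width: increasing $x$ only extends the region to the far left, so the local structure along the right boundary, where the $j$-axis now sits, is unchanged, and the count is governed solely by the fixed dent data. Hence each exceptional $\tfrac12$ enters $\M(Q'_x)$ and $\M(Q'_y)$ to the same power and cancels in the quotient --- consistent with the fact that \eqref{halfeq2} has the same prefactor as \eqref{halfeq1}, differs only in the shifted Pochhammer bases, and carries no stray power of $2$. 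What remains is the routine bookkeeping of the $q$-powers and a verification that the tile-ability hypothesis is exactly what keeps both determinants nonzero.
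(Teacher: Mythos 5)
Your proposal hinges on one structural claim: that in the Lindstr\"om--Gessel--Viennot matrix $M'_x$ for $Q'_x$, ``the width $x$ enters only through multiplicative factors attached to the rows, over a fixed $x$-independent core,'' so that $\det(M'_x)/\det(M'_y)$ collapses to a product of row-factor ratios. This claim is asserted, not proved, and it is false as stated. The entry of $M'_x$ indexed by the dent $a_i$ and the $j$-th odd zigzag step is the weighted count of single monotone paths between those two boundary points; since each path is determined by the columns occupied by its vertical lozenges, this entry is an elementary-symmetric-type evaluation $e_{v_{ij}}$ of the column weights $\tfrac{q^c+q^{-c}}{2}$ over the range of columns traversed, where the degree $v_{ij}$ (the vertical displacement) depends on \emph{both} $i$ and $j$. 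Increasing $x$ enlarges the column set uniformly for every entry, and a ratio such as $e_{v_{i1}}(W(x))/e_{v_{i2}}(W(x))$ with $v_{i1}\neq v_{i2}$ genuinely depends on $x$; so no factor $r_i(x)$ can be pulled out of row $i$. (Geometrically this is already suspect: the sources sit on the right side, which is fixed, while it is the sinks on the left that move when $x$ changes, yet the asserted factors are attached to the sources.) If such an entry-wise factorization did hold, it would moreover give $\M(Q'_x)=\bigl(\prod_i r_i(x)\bigr)\det C$ with $\det C$ independent of $x$ --- a combinatorial explanation of exactly the kind the paper states is not known for these regions. Without this step your argument does not get off the ground, and the remaining bookkeeping cannot rescue it.

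For contrast, the paper proves Theorem \ref{halfthm2} the same way as Theorem \ref{halfthm1}: rewrite \eqref{halfeq2} in the form $\M(Q'_x(\textbf{a}))=g_{x,y}(\textbf{a})\,\M(Q'_y(\textbf{a}))$ and induct on the statistic $2m-t$, using forced lozenges and the Region-splitting Lemma \ref{RS} to reduce the degenerate configurations, Lemma \ref{proctorlem2} (the evaluation of $\M(P'_{x,n})$) for the base case where all dents cluster at the bottom, and Kuo condensation (Lemma \ref{kuothm1}) to show both sides satisfy the same three-term recurrence; the different weighting is harmless there because all forced lozenges involved have weight $1$. Two ingredients of your write-up are sound and worth keeping: the observation that one cannot pass from $Q_x$ to $Q'_x$ by a single scalar because the column distribution of vertical lozenges is not a tiling invariant, and the observation that the number of vertical lozenges crossed by the $j$-axis \emph{is} a tiling invariant determined by the fixed right boundary (hence the exceptional weights $\tfrac12$ contribute equally to $\M(Q'_x)$ and $\M(Q'_y)$ and cancel in the ratio). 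But these do not substitute for the missing determinant argument.
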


\begin{rmk}[Combinatorial reciprocity phenomenon]\label{reciprocity}
The ratio in Theorem \ref{halfthm2} is obtained from the one in Theorem \ref{halfthm1} by replacing $x$ by $x-1/2$ and $y$ by $y-1/2$. This reminds us to the ``\emph{combinatorial reciprocity phenomenon}": even though the regions 
$Q_{x}((a_i)_{i=1}^{m})$ and $Q_{y}((a_i)_{i=1}^{m})$ are \emph{not} defined when $x$ and $y$  are half integers, the formula of the ratio of  their tiling generating functions gives the  ``numbers" of combinatorial objects of a different sort when evaluated at half-integers. It would be interesting to find a direct explanation for this, i.e., an explanation without requiring the calculation of the tiling generating functions.  We refer the reader to, e.g,  \cite{Beck,StanleyRecip,ProppRecip} for more discussions about the combinatorial reciprocity phenomenon.
\end{rmk}

\begin{rmk} It is worth noticing that Fulmek recently shows that one could obtain the results in Theorems \ref{semithm1} and \ref{halfthm1} by using lattice path combinatorics and a sepcial matrix factorization \cite{Ful2}. \end{rmk}

\begin{figure}\centering
\setlength{\unitlength}{3947sp}%
\begingroup\makeatletter\ifx\SetFigFont\undefined%
\gdef\SetFigFont#1#2#3#4#5{%
  \reset@font\fontsize{#1}{#2pt}%
  \fontfamily{#3}\fontseries{#4}\fontshape{#5}%
  \selectfont}%
\fi\endgroup%
\resizebox{!}{5cm}{
\begin{picture}(0,0)%
\includegraphics{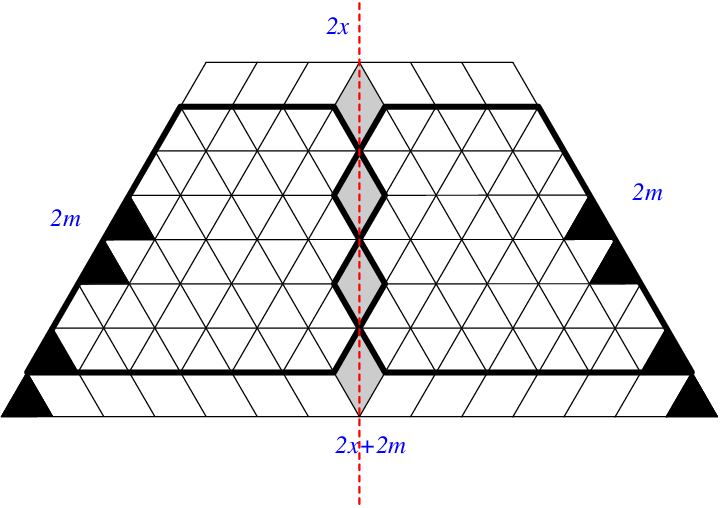}%
\end{picture}%
%
%

\begin{picture}(5802,4060)(807,-4380)
\put(909,-3571){\makebox(0,0)[lb]{\smash{{\SetFigFont{12}{14.4}{\rmdefault}{\mddefault}{\itdefault}{\color[rgb]{1,1,1}$a_4$}%
}}}}
\put(6241,-3564){\makebox(0,0)[lb]{\smash{{\SetFigFont{12}{14.4}{\rmdefault}{\mddefault}{\itdefault}{\color[rgb]{1,1,1}$a_4$}%
}}}}
\put(1740,-2114){\makebox(0,0)[lb]{\smash{{\SetFigFont{12}{14.4}{\rmdefault}{\mddefault}{\itdefault}{\color[rgb]{1,1,1}$a_1$}%
}}}}
\put(5420,-2154){\makebox(0,0)[lb]{\smash{{\SetFigFont{12}{14.4}{\rmdefault}{\mddefault}{\itdefault}{\color[rgb]{1,1,1}$a_1$}%
}}}}
\put(1516,-2506){\makebox(0,0)[lb]{\smash{{\SetFigFont{12}{14.4}{\rmdefault}{\mddefault}{\itdefault}{\color[rgb]{1,1,1}$a_2$}%
}}}}
\put(5641,-2529){\makebox(0,0)[lb]{\smash{{\SetFigFont{12}{14.4}{\rmdefault}{\mddefault}{\itdefault}{\color[rgb]{1,1,1}$a_2$}%
}}}}
\put(1089,-3219){\makebox(0,0)[lb]{\smash{{\SetFigFont{12}{14.4}{\rmdefault}{\mddefault}{\itdefault}{\color[rgb]{1,1,1}$a_3$}%
}}}}
\put(6031,-3226){\makebox(0,0)[lb]{\smash{{\SetFigFont{12}{14.4}{\rmdefault}{\mddefault}{\itdefault}{\color[rgb]{1,1,1}$a_3$}%
}}}}
\end{picture}}
\caption{Separating a symmetric semi-hexagon into two congruent quartered hexagons.}\label{Fig:sym}
\end{figure}

Theorem \ref{halfthm1} implies the following symmetric version of Theorem \ref{semithm1}. We consider the symmetric  weighted semi-hexagon $S_{2x}(\textbf{a};\textbf{a})$ with $X=Y=1$. Denote by $\M_s(S_{2x}(\textbf{a};\textbf{a}))$ the sum of the \emph{square roots} of weights of all reflectively symmetric tilings of $S_{2x}(\textbf{a};\textbf{a})$. Assume that $S_{2x}(\textbf{a};\textbf{a})$ admits a reflective symmetric tiling, it is easy to see that, say  by Lemma \ref{tileability1}, we must have $a_1>1$ and $a_m=2m$. 
\begin{cor}[Symmetric version of Theorem \ref{semithm1}] Assume that $x,y,m$ are non-negative integers and that $\textbf{a}=(a_i)_{i=1}^{m}$ is a sequence of positive integers between $1$ and $2m$. Assume besides that $a_1>1$ and $a_m=2m$. Then
\begin{align}\label{coreq}
\frac{\M_s(S_{2x}(\textbf{a};\textbf{a}))}{\M_s(S_{2y}(\textbf{a};\textbf{a}))}&=\frac{\M(Q_{x}(a_1-1,a_2-1,\dots,a_{m-1}-1))}{\M(Q_{y}(a_1-1,a_2-1,\dots,a_{m-1}-1))} \notag\\
&=q^{2(y-x)(\sum_{i=1}^{m-1}a_i- m(m-1))}\prod_{i=1}^{m-1}\frac{(q^{2(2y+a_i)};q^2)_{2i-a_i}}{(q^{2(2x+a_i)};q^2)_{2i-a_i}}
\end{align}
if $Q_{x}(a_1-1,a_2-1,\dots,a_{m-1}-1)$ is tile-able.
\end{cor}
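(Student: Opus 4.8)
The plan is to exploit the reflective symmetry of $S_{2x}(\textbf{a};\textbf{a})$ and reduce the weighted enumeration of its symmetric tilings to that of a single quartered hexagon, exactly as Figure \ref{Fig:sym} suggests. Concretely, I would cut $S_{2x}(\textbf{a};\textbf{a})$ along its vertical axis of symmetry, i.e.\ the $j$-axis through the midpoint of the base. The hypotheses supplied by Lemma \ref{tileability1}, namely $a_1>1$ and $a_m=2m$, are precisely what make this cut clean: the lowest pair of dents (at position $a_m=2m$) meets at the foot of the axis and is absorbed into the cut, so it contributes no dent to either half, dropping the number of dents from $m$ to $m-1$; and relabeling the dent positions along the zigzag produced by the cut shifts each surviving $a_i$ down by one. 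Thus the cut separates the region into two congruent quartered hexagons, each a copy of $Q_x(a_1-1,a_2-1,\dots,a_{m-1}-1)$, which also explains why the tileability hypothesis on $S_{2x}(\textbf{a};\textbf{a})$ should be equivalent to that of $Q_x(a_1-1,\dots,a_{m-1}-1)$ in \eqref{coreq}.

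Next I would set up the weight bookkeeping that turns the symmetric generating function into a perfect square. A reflectively symmetric tiling is determined by its restriction to one half; under the reflection $i\mapsto -i$ the off-axis vertical lozenges split into mirror pairs, while the vertical lozenges bisected by the axis are fixed. Since the specialization $X=Y=1$ makes the lozenge-weight $\tfrac{q^{i}+q^{-i}}{2}$ an \emph{even} function of $i$, the two lozenges of each mirror pair carry equal weight and each on-axis lozenge carries weight $\tfrac{q^{0}+q^{-0}}{2}=1$. This is the mechanism producing the square: I would show that, up to a factor independent of $x$,
\[
\M_s\big(S_{2x}(\textbf{a};\textbf{a})\big)=\M\big(Q_x(a_1-1,a_2-1,\dots,a_{m-1}-1)\big)^2,
\]
so that, upon forming the ratio with the sibling $S_{2y}(\textbf{a};\textbf{a})$, the $x$-independent factor cancels and the first equality in \eqref{coreq} drops out.

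With the factorization in hand the closed form is essentially a substitution. I would apply Theorem \ref{halfthm1} to $Q_x(a_1-1,\dots,a_{m-1}-1)$, i.e.\ replace $m$ by $m-1$ and each $a_i$ by $a_i-1$ in \eqref{halfeq1}, and then square. The outer power of $q$ doubles from $2(y-x)(\cdots)$ to $4(y-x)(\cdots)$, and the exponent collapses since $\sum_{i=1}^{m-1}(a_i-1)-(m-1)^2=\sum_{i=1}^{m-1}a_i-m(m-1)$, while the Pochhammer indices become $2i-(a_i-1)-1=2i-a_i$, reproducing the product in \eqref{coreq}. Here one must be careful about how the on-axis lozenges are distributed to the two halves: because a lozenge straddling the axis is split into boundary pieces that, in the halved picture, receive weight $\tfrac12$ rather than $1$, the relevant half-region is weighted in the $Q'$-fashion of Figure \ref{Fig:weight2}(b). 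This is consistent with the arguments $-q^{2(2x+a_i-1)}$ appearing in \eqref{coreq} (matching Theorem \ref{halfthm2}), and is the point at which I would reconcile the normalizations.

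The hard part will be justifying the squaring identity rigorously, that is, explaining why the symmetric generating function equals a single quartered-hexagon generating function squared rather than a mere sum of squared half-weights. I expect to obtain this from a reflective-symmetry factorization in the spirit of Ciucu's matching-factorization theorem, adapted to the trigonometric weight used here, where the two halves inherit the complementary axis normalizations and the symmetry of the weight forces them to be congruent. The two delicate points will be (i) aligning the central $j$-axis of $S_{2x}(\textbf{a};\textbf{a})$ with the boundary $j$-axis used to weight $Q_x$ (respectively $Q'_x$) in Figures \ref{Fig:weight2}(a)--(b), so that the half-weights are correctly identified and the on-axis lozenges are assigned consistently, and (ii) tracking the number of on-axis vertical lozenges, which can vary from tiling to tiling; controlling this variation is exactly what the factorization theorem must handle in order to yield a clean square rather than a tiling-dependent correction.
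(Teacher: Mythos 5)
Your overall strategy --- cut $S_{2x}(\textbf{a};\textbf{a})$ along the symmetry axis, identify each half (after removing forced lozenges) with $Q_x(a_1-1,\dots,a_{m-1}-1)$, and square the ratio from the quartered-hexagon theorem --- is exactly the paper's. Two of your worries can be dispatched more cheaply than you expect. The on-axis vertical lozenges do \emph{not} vary from tiling to tiling: in a reflectively symmetric tiling each unit triangle bisected by the axis must be matched vertically (matching it to its lower-left neighbour would, by symmetry, also force a match to its lower-right neighbour), so every symmetric tiling contains the same $m$ axis-centered vertical lozenges, each of weight $\frac{q^{0}+q^{-0}}{2}=1$ at $X=Y=1$. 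Deleting them disconnects the region into two congruent halves, so no factorization theorem is needed for the decomposition itself --- and Ciucu's matching factorization would in any case be the wrong tool, since it factors the \emph{full} matching generating function of a symmetric graph, not the generating function of its \emph{symmetric} matchings.

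The genuine gap is the one you half-see when you write ``rather than a mere sum of squared half-weights.'' Once the axis lozenges are deleted, a symmetric tiling is \emph{determined} by its restriction $T$ to one half, and its weight is $w(T)^2$ (each mirror pair of off-axis vertical lozenges contributes the same factor twice, the weight being even in $i$). Hence $\M_s(S_{2x}(\textbf{a};\textbf{a}))=\sum_T w(T)^2$, a sum of squares, which equals $\M(Q_x(\dots))^2=\bigl(\sum_T w(T)\bigr)^2$ only when the half-region has at most one tiling. Your plan does not close this gap, and neither does the paper's one-line assertion $\M_s=\M(Q_x(\dots))^2$: to justify the first equality of (\ref{coreq}) one must either reinterpret $\M_s$ as the generating function of all tilings containing the $m$ central lozenges (for which the Region-splitting Lemma does give a clean square, the two halves then being tiled independently), or prove a squared-weight analogue of Theorem \ref{halfthm1}. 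Separately, your observation about the $Q$ versus $Q'$ normalization is correct and worth pressing: the displayed product in (\ref{coreq}), with base $-q^{2(2x+a_i-1)}$, is what Theorem \ref{halfthm2} yields for $Q'_x(a_1-1,\dots,a_{m-1}-1)$, whereas Theorem \ref{halfthm1} applied to $Q_x(a_1-1,\dots,a_{m-1}-1)$ yields base $-q^{2(2x+a_i)}$; so the two lines of (\ref{coreq}) cannot both be right as printed, and pinning down where the $j$-axis of the half-region sits is not optional.
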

\begin{proof}
Each reflectively symmetric tiling of $S_{2x}(\textbf{a};\textbf{a})$ contains $m$ vertical lozenges intersected by the symmetry axis (see the shaded lozenges in Figure \ref{Fig:sym}). Removal of these shaded lozenges separates $S_{2x}(\textbf{a};\textbf{a})$ into two congruent regions (that are the reflection of each other over the symmetry axis). Each of these regions in turn has forced lozenges on the top and bottom rows. Removing of these forced lozenges, we get two copies of the weighted quartered hexagon $Q_{x}(a_1-1,a_2-1,\dots,a_{m-1}-1)$ (indicated by the two regions that are restricted by the bold contours). As the weight of a symmetric tiling in the sum $\M_s(S_{2x}(\textbf{a};\textbf{a}))$ is equal to the square root of the original tiling-weight of  $S_{2x}(\textbf{a};\textbf{a})$, we have a weight-preserving bijection between symmetric tilings of $S_{2x}(\textbf{a};\textbf{a})$ and tilings of the quartered hexagon $Q_{x}(a_1-1,a_2-1,\dots,a_{m-1}-1)$. It means that
\[\M_s(S_{2x}(\textbf{a};\textbf{a}))=\M(Q_{x}(a_1-1,a_2-1,\dots,a_{m-1}-1)).\]
Similarly, we have 
\[\M_s(S_{2y}(\textbf{a};\textbf{a}))=\M(Q_{y}(a_1-1,a_2-1,\dots,a_{m-1}-1)).\]
This implies the first  equality in (\ref{coreq}). The second equality follows from Theorem \ref{halfthm1}.
\end{proof}

\section{Preliminaries}\label{Sec:Pre}

\subsection{Tile-ability}
As mentioned in the previous section, the semi-hexagons with dents on both sides may not have any tilings in general. It is not hard to prove the following tile-ability for these semi-hexagons, based on the correspondence between lozenge tilings and non-intersecting lattice paths.
\begin{lem}\label{tileability1}
Assume that $x,m,n$ are non-negative integers, and $(a_i)_{i=1}^{m}$ and $(b_j)_{j=1}^{n}$ are two sequences of positive integers between $1$ and $m+n$. Then
$S_{x}((a_i)_{i=1}^{m}; (b_j)_{j=1}^{n})$ is tile-able if and only if 
\begin{equation}
|\{a_i\}_{i=1}^{m}\cap[t]|+|\{b_j\}_{j=1}^{n}\cap[t]|\leq t,
\end{equation}
for any positive integer $t$ not excess $m+n$, where we use the notation $[t]$ for the set of the first $t$ positive integers $\{1,2,\dots,t\}$, and where $|A|$ denotes the cardinality of a finite set $A$.
\end{lem}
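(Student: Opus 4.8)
The plan is to encode each tiling by the way its vertical lozenges cross the horizontal lattice lines, and then to read off the inequality as a non-negativity condition on the crossing numbers. Set up coordinates so that the region occupies $h:=m+n$ horizontal rows, with the horizontal line $L_t$ separating row $t$ from row $t+1$ (here $L_0$ is the top side, of length $x$, and $L_h$ is the base, of length $x+h$); the width of $L_t$ is $x+t$. Recall from Figure~\ref{Fig:orientation} that a vertical lozenge is the union of an up-pointing triangle in some row $r$ and the down-pointing triangle directly below it in row $r+1$, so it is precisely the type of lozenge that crosses a line $L_r$, whereas left and right lozenges lie inside a single row. For a fixed tiling and a fixed $t\in\{1,\dots,h\}$, let $v_t$ be the number of vertical lozenges crossing $L_t$, and let $R_t$ denote the sub-region formed by the top $t$ rows, with all dents at positions $\le t$ removed.

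For necessity I would argue by a balance count inside $R_t$. Every down-triangle of $R_t$ is matched either to an up-triangle in the same row or, through a vertical lozenge, to the up-triangle directly above it; in either case its partner lies in $R_t$. Every up-triangle of $R_t$ is likewise matched inside $R_t$, except for those in the bottom row $t$ that are matched downward across $L_t$, and there are exactly $v_t$ of the latter. Comparing the two counts gives $v_t=\#\{\text{up-triangles of }R_t\}-\#\{\text{down-triangles of }R_t\}$. A direct count for the dent-free trapezoid shows that each row contributes $+1$ to this difference, while each dent (always an up-triangle) contributes $-1$; hence
\[
v_t \;=\; t-\bigl|\{a_i\}_{i=1}^m\cap[t]\bigr|-\bigl|\{b_j\}_{j=1}^n\cap[t]\bigr|.
\]
Since $v_t\ge 0$ for every $t$, the stated inequality is forced. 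At $t=h$ this reads $0\le v_h$, which is consistent with the base and also confirms that the whole region is balanced.

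For sufficiency I would reverse the analysis and build a tiling line by line. Prescribing the crossing numbers $v_t=t-|\{a_i\}\cap[t]|-|\{b_j\}\cap[t]|$, which are non-negative by hypothesis and satisfy $v_t-v_{t-1}\in\{-1,0,1\}$, I would choose for each line $L_t$ a canonical set $S_t$ of $v_t$ crossing positions (for instance the $v_t$ leftmost admissible slots) and then fill each row by slanted lozenges. The content of the argument is that, after deleting the triangles already committed to vertical lozenges (those of $S_{t-1}$ from above and $S_t$ from below) together with the dent at position $t$, the triangles remaining in row $t$ can always be matched by left and right lozenges. Equivalently, this asserts that a suitable family of non-intersecting monotone lattice paths, with the dents and the slanted sides as prescribed endpoints, can be routed; the prefix inequalities are exactly Hall's condition for such a routing, so the existence of the paths, and hence of the tiling, follows. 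Since the region is simply connected (the dents only indent its boundary), one may alternatively invoke the fact that for such regions the obstructions to a perfect matching occur only along straight lattice cuts, and the horizontal cuts $L_t$ are precisely the ones controlled by $v_t\ge 0$.

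The main obstacle is the sufficiency step, and more precisely the local matchability inside each row. Unlike the necessity count, which is purely numerical, one must verify that the global prefix condition $\sum(\text{dents in }[t])\le t$ guarantees the parity/interlacing constraints needed for the within-row slanted matching never to fail, i.e. that between any two consecutively committed triangles an even number of triangles remains. I expect this to be the delicate point: it is where one checks that the non-intersecting paths genuinely do not collide, and it is the reason the bare ``balanced region'' heuristic does not suffice on its own. Once the canonical choice of the $S_t$ is shown to meet these constraints, or once Hall's condition is invoked in the path model, the two directions combine to give the claimed equivalence.
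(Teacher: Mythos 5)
Your necessity argument is correct, and it takes a genuinely different route from the paper's. You count up- and down-pointing unit triangles in the truncated region $R_t$ and identify the surplus with the number $v_t\ge 0$ of vertical lozenges crossing the line $L_t$; the identity $v_t=t-|\{a_i\}_{i=1}^{m}\cap[t]|-|\{b_j\}_{j=1}^{n}\cap[t]|$ then forces the inequality. The paper instead encodes a tiling as a family of $n$ disjoint, weakly upward paths of left and vertical lozenges running from the $b$-dents to non-$a$ positions on the left side, and reads the inequality off from the resulting injection of $\{b_j\}_{j=1}^{n}\cap[t]$ into $[t]\setminus\{a_i\}_{i=1}^{m}$. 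Your version is self-contained (it needs no path encoding) and is a perfectly good substitute for this direction.

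The sufficiency direction is where your proposal has a genuine gap, which you yourself flag. Your primary plan --- prescribe the crossing numbers $v_t$, choose the $v_t$ ``leftmost admissible slots'' on each line $L_t$, and fill each row with left and right lozenges --- is only a plan: the claim that this canonical choice of $S_t$ always leaves each row matchable (your ``parity/interlacing'' condition) is exactly what needs proof, and you do not supply it. The fallback you mention, that for simply connected regions the only obstructions to tileability are straight lattice cuts, is not a theorem you can invoke: tileability of simply connected regions is governed by Thurston's height-function criterion, which is not equivalent to checking straight cuts. What does work, and is essentially the paper's argument, is your other fallback: tilings biject with families of non-intersecting, weakly upward, monotone lozenge paths from the $b$-dents to the free positions on the left side; such a family exists if and only if the $k$-th $b$-dent from the top lies weakly below the $k$-th non-$a$ position from the top, and this ordered matching condition is equivalent to the prefix inequalities. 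The paper short-circuits even this verification by simply exhibiting a tiling: under the hypothesis the $n$ paths can be drawn as nested ``hooks'' (Figure~\ref{Fig:Semitwodent2}(b)). To complete your proof you should either carry out the interlacing verification for your canonical sets $S_t$, or --- more economically --- prove the order-matching equivalence and exhibit the hook paths explicitly.
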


\begin{figure}\centering
\setlength{\unitlength}{3947sp}%
\begingroup\makeatletter\ifx\SetFigFont\undefined%
\gdef\SetFigFont#1#2#3#4#5{%
  \reset@font\fontsize{#1}{#2pt}%
  \fontfamily{#3}\fontseries{#4}\fontshape{#5}%
  \selectfont}%
\fi\endgroup%
\resizebox{!}{5cm}{
\begin{picture}(0,0)%
\includegraphics{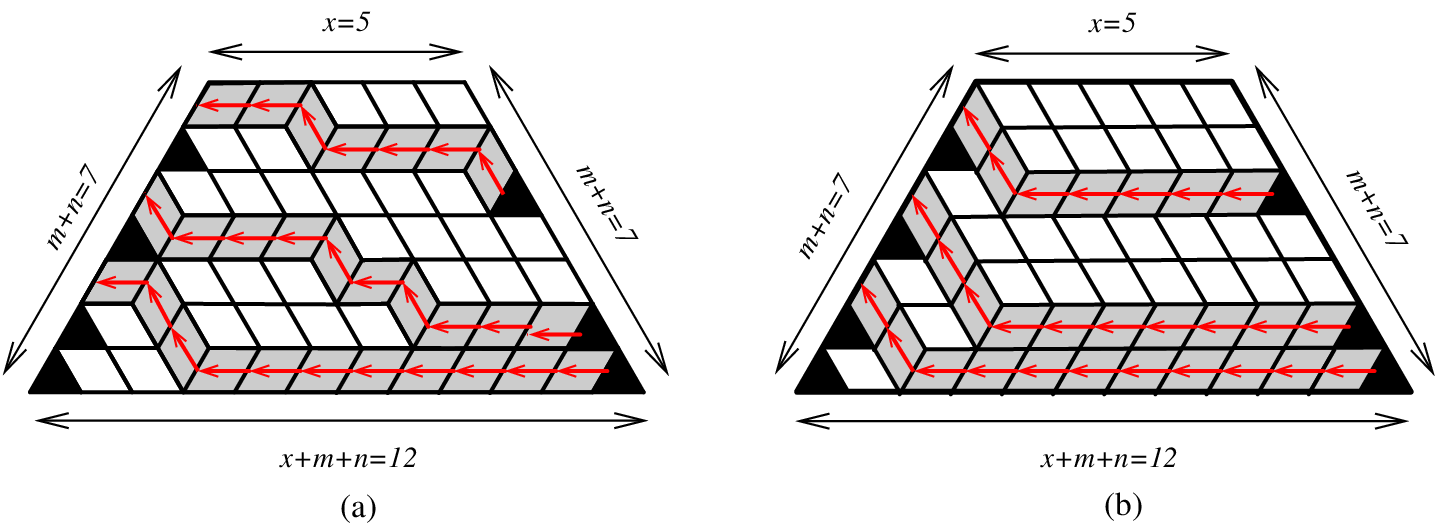}%
\end{picture}%
%
%

\begin{picture}(11500,4233)(576,-3787)
\put(11339,-2245){\makebox(0,0)[lb]{\smash{{\SetFigFont{14}{16.8}{\rmdefault}{\mddefault}{\itdefault}{\color[rgb]{1,1,1}$b_2$}%
}}}}
\put(5182,-2256){\makebox(0,0)[lb]{\smash{{\SetFigFont{14}{16.8}{\rmdefault}{\mddefault}{\itdefault}{\color[rgb]{1,1,1}$b_2$}%
}}}}
\put(5384,-2624){\makebox(0,0)[lb]{\smash{{\SetFigFont{14}{16.8}{\rmdefault}{\mddefault}{\itdefault}{\color[rgb]{1,1,1}$b_3$}%
}}}}
\put(1904,-846){\makebox(0,0)[lb]{\smash{{\SetFigFont{14}{16.8}{\rmdefault}{\mddefault}{\itdefault}{\color[rgb]{1,1,1}$a_1$}%
}}}}
\put(1514,-1566){\makebox(0,0)[lb]{\smash{{\SetFigFont{14}{16.8}{\rmdefault}{\mddefault}{\itdefault}{\color[rgb]{1,1,1}$a_2$}%
}}}}
\put(1087,-2279){\makebox(0,0)[lb]{\smash{{\SetFigFont{14}{16.8}{\rmdefault}{\mddefault}{\itdefault}{\color[rgb]{1,1,1}$a_3$}%
}}}}
\put(877,-2624){\makebox(0,0)[lb]{\smash{{\SetFigFont{14}{16.8}{\rmdefault}{\mddefault}{\itdefault}{\color[rgb]{1,1,1}$a_4$}%
}}}}
\put(4567,-1199){\makebox(0,0)[lb]{\smash{{\SetFigFont{14}{16.8}{\rmdefault}{\mddefault}{\itdefault}{\color[rgb]{1,1,1}$b_1$}%
}}}}
\put(8054,-820){\makebox(0,0)[lb]{\smash{{\SetFigFont{14}{16.8}{\rmdefault}{\mddefault}{\itdefault}{\color[rgb]{1,1,1}$a_1$}%
}}}}
\put(7627,-1547){\makebox(0,0)[lb]{\smash{{\SetFigFont{14}{16.8}{\rmdefault}{\mddefault}{\itdefault}{\color[rgb]{1,1,1}$a_2$}%
}}}}
\put(7222,-2252){\makebox(0,0)[lb]{\smash{{\SetFigFont{14}{16.8}{\rmdefault}{\mddefault}{\itdefault}{\color[rgb]{1,1,1}$a_3$}%
}}}}
\put(7019,-2590){\makebox(0,0)[lb]{\smash{{\SetFigFont{14}{16.8}{\rmdefault}{\mddefault}{\itdefault}{\color[rgb]{1,1,1}$a_4$}%
}}}}
\put(10732,-1172){\makebox(0,0)[lb]{\smash{{\SetFigFont{14}{16.8}{\rmdefault}{\mddefault}{\itdefault}{\color[rgb]{1,1,1}$b_1$}%
}}}}
\put(11557,-2612){\makebox(0,0)[lb]{\smash{{\SetFigFont{14}{16.8}{\rmdefault}{\mddefault}{\itdefault}{\color[rgb]{1,1,1}$b_3$}%
}}}}
\end{picture}}
\caption{Encoding each tiling of the semi-hexagon $S_{x}(\textbf{a},\textbf{b})$ as a family of disjoint lozenge paths.}\label{Fig:Semitwodent2}
\end{figure}

\begin{proof}
It is easy to see that the lemma holds for the case $m=n=0$. Without loss of generality, we assume in the rest of the proof that $n>0$. 

Assume that the semi-hexagon $S_{x}((a_i)_{i=1}^m; (b_{j})_{j=1}^{n})$ is tile-able. Each tiling of it can be encoded uniquely as a family of $n$ disjoint paths of juxtaposing lozenges, as shown by the shaded paths in Figure \ref{Fig:Semitwodent2}. In particular, each of the lozenge paths consists of right and vertical lozenges. The path goes from a dent position on the right side to a non-dent position on the left side. The lozenges outside these $m$ paths are all left lozenges. As the path always goes weakly upward, the ending position of a path is not lower than the starting position.

Let $t$ be any positive integer in $[m+n]$. Assume that $|\{b_j\}_{i=1}^{n}\cap [t]|=p$, i.e., there are exactly $p$  `\emph{right dents}' (dents on the right side) within the distance $t$ from the top of the semi-hexagon. Each of these  right dents is connected to a non-dent position within the distance $t$ on the left side by a lozenge path. It means that these lozenge paths yield an injective mapping from the set $\{b_j\}_{i=1}^{n}\cap [t]$ to the set $[t]-\{a_i\}_{i=1}^{m}$. It implies that \[|\{b_j\}_{i=1}^{n}\cap [t]|\leq t- |\{a_i\}_{i=1}^{m}\cap [t]|,\] or $|\{b_j\}_{i=1}^{n}\cap [t]|+|\{a_i\}_{i=1}^{m}\cap [t]|\leq t$ as desired. 

In reverse, assume that $|\{b_j\}_{i=1}^{n}\cap [t]|+|\{a_i\}_{i=1}^{m}\cap [t]|\leq t$ for any $t\in [m+n]$. We need to point out a particular tiling of the region $S_{x}((a_i)_{i=1}^n; (b_{j})_{j=1}^{n})$. It is easy to see that, in this case, the region always has a tiling as shown in Figure \ref{Fig:Semitwodent2}(b). Each lozenge path is now a shaded ``hook." This finishes the proof of the lemma.
\end{proof}

Similar to the case of the semi-hexagons, the quartered hexagon $Q_{x}((a_i)_{i=1}^{m})$ may have no tiling. The following lemma provides a condition for the tile-ability of this region.
\begin{lem}\label{tileability2}
Assume that $x,m$ are non-negative integers and that $(a_i)_{i=1}^{m}$ is a sequence of positive integers between $1$ and $2m$. Then
$Q_{x}((a_i)_{i=1}^{m})$ is tile-able if and only if 
\begin{equation}
|\{a_i\}_{i=1}^{m}\cap[2t]|\leq t,
\end{equation}
for any positive integer $t$ not excess $m$.
\end{lem}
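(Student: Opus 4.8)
The plan is to mimic the lozenge-path argument used in the proof of Lemma \ref{tileability1}, adapting it to the zigzag left boundary of the quartered hexagon. As in Figure \ref{Fig:Semitwodent2}, I would first encode each tiling of $Q_{x}((a_i)_{i=1}^{m})$ as a family of $m$ pairwise disjoint paths of juxtaposing lozenges, one path emanating from each dent $a_i$ on the right side (see Figure \ref{Fig:Semitwodent5}). Each such path travels weakly upward and terminates on the left boundary, and every lozenge outside the paths is forced. The decisive new feature is the zigzag shape of that boundary: a path can exit only at one of the re-entrant corners of the staircase, and since the left side has $2m$ steps, these admissible endpoints occur at \emph{every other} level, i.e.\ at the positions $2,4,\dots,2m$ measured from the top (after accounting for the lozenges forced along the staircase). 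This every-other-level spacing of the sinks is precisely the source of the factor $2$ appearing in the tile-ability condition.

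For the forward direction, suppose $Q_{x}((a_i)_{i=1}^{m})$ is tile-able and fix $t\in\{1,\dots,m\}$. Because the paths are disjoint and weakly increasing, the endpoint of the path starting at a dent $a_i$ lies no lower than $a_i$; hence every dent with $a_i\le 2t$ is sent to a \emph{distinct} admissible endpoint at level at most $2t$. As there are exactly $t$ such endpoints, namely $2,4,\dots,2t$, the paths furnish an injection from $\{a_i\}_{i=1}^{m}\cap[2t]$ into a set of cardinality $t$, which yields $|\{a_i\}_{i=1}^{m}\cap[2t]|\le t$.

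For the reverse direction, assume the inequalities hold for all $t$. I would then exhibit an explicit tiling exactly as in the second half of the proof of Lemma \ref{tileability1}: the inequalities are precisely Hall's condition for the bipartite incidence between the dents $\{a_i\}_{i=1}^{m}$ and the admissible endpoints $\{2,4,\dots,2m\}$, so a weakly increasing system of distinct representatives exists; realizing each representative by a ``hook''-shaped lozenge path and filling the complement with the remaining forced lozenges produces a tiling of $Q_{x}((a_i)_{i=1}^{m})$. The main obstacle, and the only step that genuinely differs from Lemma \ref{tileability1}, is the geometric bookkeeping that (i) sets up the tiling-to-path bijection in the presence of the staircase boundary and (ii) pins down the admissible endpoints to the even levels $2,4,\dots,2m$; once this is in place, both implications are immediate. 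Note finally that tile-ability is a purely combinatorial property, so the lozenge weights play no role in this lemma.
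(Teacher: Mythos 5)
Your overall strategy---encoding each tiling as $m$ disjoint, weakly upward lozenge paths running from the dents to the zigzag boundary, using these paths to build an injection for the forward direction, and exhibiting hook-shaped paths for the converse---is exactly the route the paper takes. However, the one step you yourself single out as the crucial new ingredient, namely pinning down the admissible endpoints on the staircase, is where your proposal goes wrong: you place them at the \emph{even} levels $2,4,\dots,2m$, whereas they sit at the \emph{odd} steps $1,3,\dots,2m-1$ of the zigzag. The parity matters. Since a path's endpoint must lie weakly above its starting dent, a dent at position $a_i=1$ would, under your convention, have no admissible endpoint at all, and your argument would wrongly conclude that any region with a dent at position $1$ is untileable. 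But such regions can be tileable---the proof of Theorem \ref{halfthm1} treats the case $a_1=1$ explicitly, with forced lozenges in the top two rows---and the stated condition $|\{a_i\}_{i=1}^{m}\cap[2t]|\leq t$ permits a dent at position $1$. More generally, with even endpoints a dent at an odd position $2s-1$ would have only the $s-1$ endpoints $2,4,\dots,2s-2$ available, so your injection argument would actually prove the strictly stronger, and false, necessary condition $|\{a_i\}_{i=1}^{m}\cap[2t-1]|\leq t-1$. The converse direction breaks for the same reason: Hall's condition for your bipartite graph between dents and even levels fails whenever some $a_i=1$, even though the region may well be tileable (try $m=2$, $\textbf{a}=(1,4)$).

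The repair is a one-word change: the admissible endpoints are the odd steps, so the $t$ sinks among the first $2t$ levels are $1,3,\dots,2t-1$. With that correction both your counting argument and your Hall-type construction of the hook tiling go through and coincide with the paper's proof (the paper simply exhibits the hooks in a figure rather than invoking Hall's theorem, but the content is the same).
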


\begin{figure}\centering
\setlength{\unitlength}{3947sp}%
\begingroup\makeatletter\ifx\SetFigFont\undefined%
\gdef\SetFigFont#1#2#3#4#5{%
  \reset@font\fontsize{#1}{#2pt}%
  \fontfamily{#3}\fontseries{#4}\fontshape{#5}%
  \selectfont}%
\fi\endgroup%
\resizebox{!}{7cm}{
\begin{picture}(0,0)%
\includegraphics{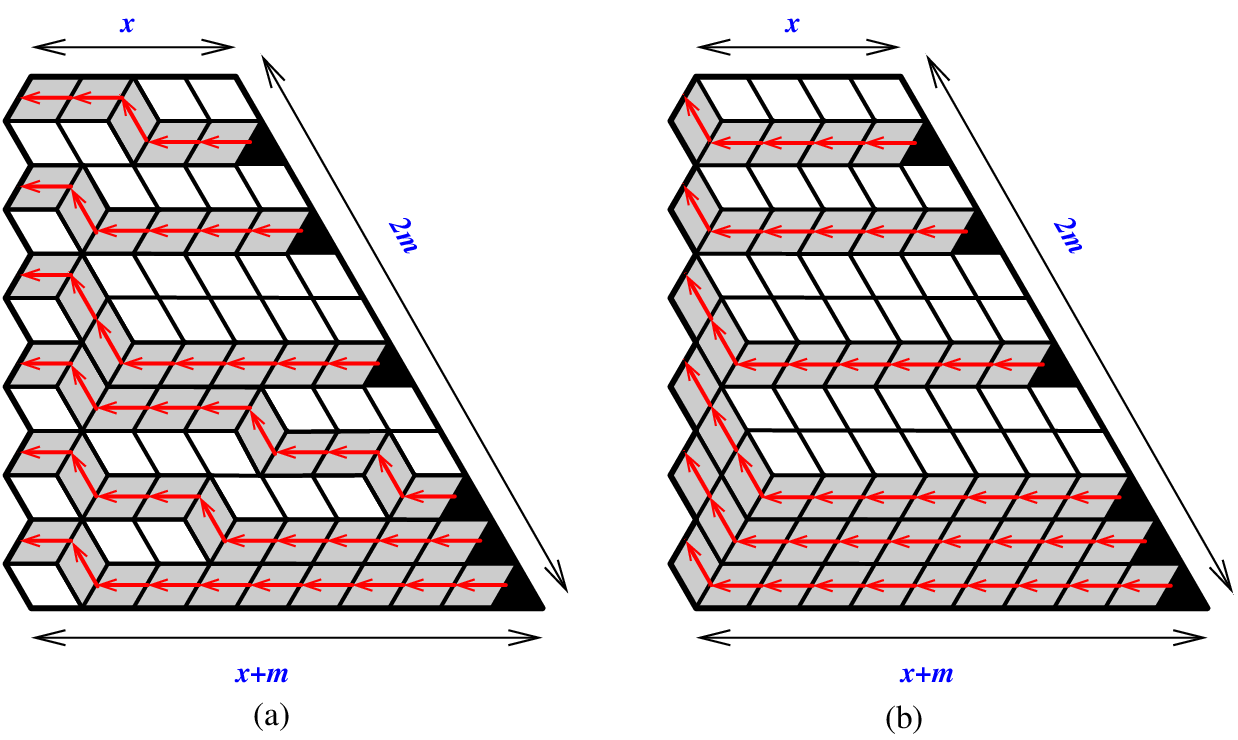}%
\end{picture}%
%
%

\begin{picture}(9886,5902)(775,-5295)
\put(9918,-3781){\makebox(0,0)[lb]{\smash{{\SetFigFont{14}{16.8}{\rmdefault}{\bfdefault}{\updefault}{\color[rgb]{1,1,1}$a_5$}%
}}}}
\put(10128,-4126){\makebox(0,0)[lb]{\smash{{\SetFigFont{14}{16.8}{\rmdefault}{\bfdefault}{\updefault}{\color[rgb]{1,1,1}$a_6$}%
}}}}
\put(8073,-601){\makebox(0,0)[lb]{\smash{{\SetFigFont{14}{16.8}{\rmdefault}{\bfdefault}{\updefault}{\color[rgb]{1,1,1}$a_1$}%
}}}}
\put(2754,-601){\makebox(0,0)[lb]{\smash{{\SetFigFont{14}{16.8}{\rmdefault}{\bfdefault}{\updefault}{\color[rgb]{1,1,1}$a_1$}%
}}}}
\put(3174,-1314){\makebox(0,0)[lb]{\smash{{\SetFigFont{14}{16.8}{\rmdefault}{\bfdefault}{\updefault}{\color[rgb]{1,1,1}$a_2$}%
}}}}
\put(3774,-2379){\makebox(0,0)[lb]{\smash{{\SetFigFont{14}{16.8}{\rmdefault}{\bfdefault}{\updefault}{\color[rgb]{1,1,1}$a_3$}%
}}}}
\put(4389,-3429){\makebox(0,0)[lb]{\smash{{\SetFigFont{14}{16.8}{\rmdefault}{\bfdefault}{\updefault}{\color[rgb]{1,1,1}$a_4$}%
}}}}
\put(4599,-3781){\makebox(0,0)[lb]{\smash{{\SetFigFont{14}{16.8}{\rmdefault}{\bfdefault}{\updefault}{\color[rgb]{1,1,1}$a_5$}%
}}}}
\put(4809,-4126){\makebox(0,0)[lb]{\smash{{\SetFigFont{14}{16.8}{\rmdefault}{\bfdefault}{\updefault}{\color[rgb]{1,1,1}$a_6$}%
}}}}
\put(8493,-1314){\makebox(0,0)[lb]{\smash{{\SetFigFont{14}{16.8}{\rmdefault}{\bfdefault}{\updefault}{\color[rgb]{1,1,1}$a_2$}%
}}}}
\put(9093,-2379){\makebox(0,0)[lb]{\smash{{\SetFigFont{14}{16.8}{\rmdefault}{\bfdefault}{\updefault}{\color[rgb]{1,1,1}$a_3$}%
}}}}
\put(9708,-3429){\makebox(0,0)[lb]{\smash{{\SetFigFont{14}{16.8}{\rmdefault}{\bfdefault}{\updefault}{\color[rgb]{1,1,1}$a_4$}%
}}}}
\end{picture}}
\caption{Encoding each tiling of the quartered hexagon $Q_{x}(\textbf{a})$ as a family of disjoint lozenge paths.}\label{Fig:Semitwodent6}
\end{figure}

\begin{proof}
This lemma can be proved similarly to Lemma \ref{tileability1} above. Each tiling of the halved hexagon $Q_{x}((a_i)_{i=1}^{m})$ (if exist) can be encoded as a family of $m$ disjoint lozenge paths. These paths go from a dent on the right side to an odd step on the region's vertical zigzag side (see Figure \ref{Fig:Semitwodent6}). We also note that all the lozenge paths go weakly upward. For each $t\in [m]$, we consider the dents within the distance $2t$ from the top of the region. Each of these dents is connected to one of $t$ odd steps between $1$ and $2t$ (as these lozenge paths go weakly upward). It means that the number of dents within the distance $2t$ from the top is at most $t$. Equivalently, we have $|\{a_i\}_{i=1}^{m}\cap[2t]|\leq t.$

Reversely, if we have $|\{a_i\}_{i=1}^{m}\cap[2t]|\leq t$, for any $t\in [m]$, then the quartered hexagon $Q_{x}((a_i)_{i=1}^{m})$ always has a tiling as shown in Figure \ref{Fig:Semitwodent6}(b) (the corresponding lozenge paths are indicated by the $m$ shaded hooks). This completes the proof.
\end{proof}

\subsection{Kuo Condensations and Region-splitting Lemma} In the early $2000$s, Eric H. Kuo \cite{Kuo} proved  several combinatorial interpretations of  the well-known Dodgson condensation  in linear algebra \cite{Dodgson}.  Kuo condensation has become a powerful tool in the field of  enumeration of tilings.

A \emph{perfect matching} of a simple graph is a collection of disjoint edges that cover all vertices of the graph. We use the notation $\MM(G)$ for the weighted sum of the perfect matchings of the weighted graph $G$, where the weight of a perfect matching is the product of the weights of its edges. We call $\MM(G)$ the \emph{matching generating function} of $G$. There is a one-to-one correspondence between tilings of a region $R$ on the triangular lattice and perfect matchings of its \emph{(planar) dual graph} $G$ (i.e., the graph whose vertices are the unit triangles in $R$ and whose edges connect precisely two unit triangles sharing an edge). Each edge of the  dual graph inherits the weight of the corresponding lozenge in the region. In particular, we have $\M(R)=\MM(G).$

We will employ the following three versions of the Kuo condensation in our proofs.

\begin{lem}[Theorem 5.1 in \cite{Kuo}]\label{kuothm0}
Let $G=(V_1,V_2,E)$ be a weighted plane bipartite graph in which $|V_1|=|V_2|$. Let vertices $u,v,w,s$ appear on a face of $G$, in that order. If $u,w \in V_1$ and $v,s\in V_2$, then
\begin{equation}\label{kuoeq0}
\MM(G)\MM(G-\{u,v,w,s\})=\MM(G-\{u,v\})\MM(G-\{w,s\})+\MM(G-\{u,s\})\MM(G-\{v,w\}).
\end{equation}
\end{lem}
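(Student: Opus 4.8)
The plan is to prove the identity bijectively by superimposing perfect matchings, which is the standard route to Kuo-type condensation. Each of the four products in \eqref{kuoeq0} counts, with weights, ordered pairs of perfect matchings: $\M(G)\M(G-\{u,v,w,s\})$ counts pairs $(M_1,M_2)$ with $M_1$ a matching of $G$ and $M_2$ a matching of $G-\{u,v,w,s\}$, and similarly for the two terms on the right. I would overlay the two matchings of each pair to form the superposition $M_1\cup M_2$, viewed as a multigraph on $V(G)$. In every one of the three products, exactly the four vertices $u,v,w,s$ are covered by precisely one of the two matchings, so they have degree one in the superposition, while every other vertex has degree two (possibly via a doubled edge). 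Hence each superposition decomposes into doubled edges, alternating even cycles, and exactly two vertex-disjoint paths whose endpoints are $u,v,w,s$.

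The first key step is to determine how the two paths pair up the four endpoints. Since $G$ is bipartite with $u,w\in V_1$ and $v,s\in V_2$, any path has odd length precisely when it joins $V_1$ to $V_2$; combined with the fact that the matching-edge incident to each of $u,v,w,s$ is forced (it must lie in whichever matching covers that vertex), this parity constraint restricts the admissible pairings. Bipartiteness alone, however, still permits the crossing pairing $\{u,w\},\{v,s\}$, since both pairs lie inside a single part and even-length paths between same-part vertices are allowed; this is exactly where planarity enters. Because $u,v,w,s$ occur in this cyclic order on a common face, two vertex-disjoint paths realizing $\{u,w\}$ and $\{v,s\}$ would have to cross, which is impossible in a plane graph (a Jordan-curve argument). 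I would therefore conclude that only the two non-crossing pairings survive, namely Type I $=\{\{u,v\},\{w,s\}\}$ and Type II $=\{\{u,s\},\{v,w\}\}$.

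Next I would match each product to a single type. Recording which matching covers each special vertex, the forced edge-colors at $u,v,w,s$ together with the odd-length constraint show that the pairs from $\M(G-\{u,v\})\M(G-\{w,s\})$ produce exactly the Type I superpositions, the pairs from $\M(G-\{u,s\})\M(G-\{v,w\})$ produce exactly the Type II superpositions, while the pairs from $\M(G)\M(G-\{u,v,w,s\})$ produce both types. The final step is the weight-preserving count: for a fixed superposition $\Sigma$, the product of the weights of the two matchings equals $\prod_{e\in\Sigma}\wt(e)$, with doubled edges contributing their square, so it depends only on $\Sigma$; moreover the number of ways to recover the ordered pair from $\Sigma$ equals $2^{c(\Sigma)}$, where $c(\Sigma)$ is the number of genuine cycles, because the two paths are colored by their forced endpoints while each cycle may be colored in two ways, independently of which of the three products we started from. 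Consequently the Type I part of $\M(G)\M(G-\{u,v,w,s\})$ equals $\M(G-\{u,v\})\M(G-\{w,s\})$ superposition by superposition, the Type II part equals $\M(G-\{u,s\})\M(G-\{v,w\})$, and summing the two types yields \eqref{kuoeq0}.

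I expect the main obstacle to be the rigorous exclusion of the crossing pairing, that is, converting ``the two disjoint paths would have to cross'' into a clean argument from the planar embedding and the prescribed cyclic order of $u,v,w,s$ on a face, since the parity and bipartite considerations by themselves do not rule it out. A secondary technical point requiring care is the weight bookkeeping for doubled edges and cycles, to be sure that the multiplicity $2^{c(\Sigma)}$ and the weight $\prod_{e\in\Sigma}\wt(e)$ are genuinely identical across all three products; once these are pinned down the identity follows by comparing contributions one superposition at a time.
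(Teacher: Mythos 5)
The paper does not prove this lemma at all: it is quoted verbatim as Theorem 5.1 of \cite{Kuo}, so there is no in-paper argument to compare against. Your superposition proof is correct and is essentially Kuo's original graphical-condensation argument: overlay the two matchings, observe that $u,v,w,s$ are the only degree-one vertices, decompose into doubled edges, cycles, and two paths, rule out the crossing pairing, and match contributions type by type, noting that the weight $\prod_{e\in\Sigma}\wt(e)$ (doubled edges squared) and the multiplicity $2^{c(\Sigma)}$ depend only on the superposition. One small refinement: for the product $\M(G)\M(G-\{u,v,w,s\})$ both paths carry $M_1$-edges at both ends and are therefore odd, so the crossing pairing $\{u,w\},\{v,s\}$ is already excluded by bipartite parity there; the Jordan-curve/planarity argument is genuinely needed only for the two products on the right-hand side, where the terminal edges at the two endpoints of a putative $u$--$w$ path would lie in different matchings and an even path is parity-consistent. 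This does not affect the validity of your argument, only where each hypothesis is actually used.
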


\begin{lem}[Theorem 5.3 in \cite{Kuo}]\label{kuothm1}
Let $G=(V_1,V_2,E)$ be a weighted plane bipartite graph in which $|V_1|=|V_2|+1$. Let vertices $u,v,w,s$ appear on a face of $G$, in that order. If $u,v,w \in V_1$ and $s\in V_2$, then
\begin{equation}\label{kuoeq1}
\MM(G-\{v\})\MM(G-\{u,w,s\})=\MM(G-\{u\})\MM(G-\{v,w,s\})+\MM(G-\{w\})\MM(G-\{u,v,s\}).
\end{equation}
\end{lem}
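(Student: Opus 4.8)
The plan is to prove this identity by the standard \emph{superposition-of-matchings} technique, namely to exhibit an explicit weight-preserving bijection
\[
\mathcal{P}(G-\{v\})\times\mathcal{P}(G-\{u,w,s\})\;\longleftrightarrow\;\bigl(\mathcal{P}(G-\{u\})\times\mathcal{P}(G-\{v,w,s\})\bigr)\,\sqcup\,\bigl(\mathcal{P}(G-\{w\})\times\mathcal{P}(G-\{u,v,s\})\bigr),
\]
where $\mathcal{P}(H)$ denotes the set of perfect matchings of $H$. Since the weight of a pair of matchings is the product of their two weights, and the bijection will only redistribute edges without creating or destroying any, it preserves total weight and hence yields (\ref{kuoeq1}). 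First I would form, for a left-hand pair $(M_1,M_2)$, the superposition $S=M_1\cup M_2$ as a multiset of edges. Counting multiplicities, every vertex other than $u,v,w,s$ is covered exactly twice, while each of $u,v,w,s$ is covered exactly once: indeed $v$ is saturated only by $M_2$, whereas $u,w,s$ are saturated only by $M_1$. Consequently $S$ decomposes into doubled edges, alternating cycles, and exactly two vertex-disjoint paths whose four endpoints are $u,v,w,s$, the edges along each path alternating between $M_1$ and $M_2$.

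Next comes the combinatorial heart: determining which pairings of $\{u,v,w,s\}$ into the two path-endpoint pairs can occur. Two constraints are available. The \emph{bipartite parity} constraint: since the end-edge at $v$ lies in $M_2$ while the end-edges at $u,w,s$ lie in $M_1$, a path joining two $M_1$-endpoints has an odd number of edges and hence joins vertices of opposite color; as $u,w\in V_1$ share a color, $u$ and $w$ can never be path-paired. This already forces the left-hand pairs into one of the two types $\{u,v\},\{w,s\}$ or $\{v,w\},\{u,s\}$. The \emph{planarity} constraint is needed for the right-hand products: repeating the parity analysis there leaves, in each case, two admissible pairings, one of which is the \emph{crossing} pairing $\{u,w\},\{v,s\}$; because $u,v,w,s$ lie in this cyclic order on one face of $G$, two vertex-disjoint paths cannot cross, so this pairing is impossible. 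Hence the first right-hand product admits only $\{u,v\},\{w,s\}$ and the second only $\{v,w\},\{u,s\}$.

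Finally I would define the bijection by a path-swap. Given a left-hand pair of type $\{u,v\},\{w,s\}$, I interchange the $M_1$- and $M_2$-labels along the path joining $u$ and $v$, leaving all other edges, cycles, and doubled edges untouched; a direct check at the endpoints shows this turns $(M_1,M_2)$ into a pair in $\mathcal{P}(G-\{u\})\times\mathcal{P}(G-\{v,w,s\})$ whose superposition is the same multiset $S$. Swapping instead along the $v$--$w$ path sends a left-hand pair of type $\{v,w\},\{u,s\}$ into $\mathcal{P}(G-\{w\})\times\mathcal{P}(G-\{u,v,s\})$. Each swap is an involution at the level of edge-multisets, hence invertible, and the two swaps hit the two right-hand products bijectively; since $S$ is unchanged, $\wt(M_1)\wt(M_2)$ equals the product of the new matching-weights. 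Summing over all pairs then gives (\ref{kuoeq1}).

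The step I expect to be the main obstacle is the planarity argument. The bipartite parity bookkeeping is routine, but it is precisely the topological input---that vertex-disjoint paths in a planar graph whose endpoints lie in cyclic order $u,v,w,s$ on a common face cannot realize the crossing pairing---that eliminates the spurious fourth term and makes the right-hand side collapse to exactly two summands. I would justify it by a Jordan-curve argument: a path from $u$ to $w$ together with an arc of the face separates $v$ from $s$, so any $v$--$s$ path must meet it, contradicting disjointness. Making this rigorous for an arbitrary plane embedding, rather than relying on a picture, is the delicate part of the proof.
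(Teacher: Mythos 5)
Your argument is correct: the superposition $M_1\cup M_2$, the parity bookkeeping that forbids pairing the two same-colored deficient vertices joined by an odd (two-$M_1$-ended) path, the Jordan-curve exclusion of the crossing pairing on the right-hand side, and the weight-preserving path-swap together constitute exactly Kuo's original proof of Theorem 5.3. The paper itself gives no proof of this lemma --- it is imported by citation from \cite{Kuo} --- so your reconstruction is the standard argument and there is nothing in the paper to compare it against.
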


\begin{lem}[Theorem 5.4 in \cite{Kuo}]\label{kuothm2}
Let $G=(V_1,V_2,E)$ be a weighted plane bipartite graph in which $|V_1|=|V_2|+2$. Let vertices $u,v,w,s$ appear on a face of $G$, in that order. If $u,v,w,s \in V_1$, then
\begin{equation}\label{kuoeq2}
\MM(G-\{u,w\})\MM(G-\{v,s\})=\MM(G-\{u,v\})\MM(G-\{w,s\})+\MM(G-\{u,s\})\MM(G-\{v,w\}).
\end{equation}
\end{lem}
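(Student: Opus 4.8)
The plan is to prove this identity by the superposition-of-matchings argument that underlies all of Kuo's condensation theorems, in the form appropriate to the case where all four marked vertices lie in $V_1$ (so that the left-hand product is $\M(G-\{u,w\})\,\M(G-\{v,s\})$). Write $\mathcal{M}(H)$ for the \emph{set} of perfect matchings of a graph $H$, so that $\M(H)=\sum_{M\in\mathcal{M}(H)}\wt(M)$. First I would observe that for a pair $(M_1,M_2)$ the product weight $\wt(M_1)\wt(M_2)$ equals $\prod_{e}\wt(e)^{m(e)}$, where $m(e)\in\{0,1,2\}$ is the multiplicity of the edge $e$ in the superposition $M_1\cup M_2$. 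Hence this weight depends only on the edge-multiset $M_1\cup M_2$, and any bijection that preserves the superposition is automatically weight-preserving. It therefore suffices to construct such a bijection from $\mathcal{M}(G-\{u,w\})\times\mathcal{M}(G-\{v,s\})$ onto the disjoint union $\big(\mathcal{M}(G-\{u,s\})\times\mathcal{M}(G-\{v,w\})\big)\sqcup\big(\mathcal{M}(G-\{u,v\})\times\mathcal{M}(G-\{w,s\})\big)$.

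Next I would analyze the combinatorial type of the superposition $S=M_1\cup M_2$ of a left-hand pair. Every vertex outside $\{u,v,w,s\}$ is covered once by each matching and so has degree $2$ in $S$, while each of $u,v,w,s$ is covered by exactly one matching and has degree $1$; thus $S$ decomposes into doubled edges, alternating cycles, and exactly two alternating paths whose four endpoints are $u,v,w,s$. The edge of $S$ meeting $u$ or $w$ belongs to $M_2$ (these vertices are deleted in $G-\{u,w\}$), the edge meeting $v$ or $s$ belongs to $M_1$, and all four vertices lie in $V_1$; a parity check on alternating paths then forces each path to join a vertex of $\{u,w\}$ to a vertex of $\{v,s\}$. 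This leaves the two pairings $(uv)(ws)$ and $(us)(vw)$, both of which are consistent with the cyclic order $u,v,w,s$ and both of which genuinely occur. The same degree analysis applies to the two right-hand families, where planarity now becomes decisive: a pair in $\mathcal{M}(G-\{u,v\})\times\mathcal{M}(G-\{w,s\})$ has paths joining $\{u,v\}$ to $\{w,s\}$, whose only non-crossing option on the common face is $(us)(vw)$, while a pair in $\mathcal{M}(G-\{u,s\})\times\mathcal{M}(G-\{v,w\})$ is forced into $(uv)(ws)$.

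I would then define the bijection by \emph{path-switching}: given a left-hand pair, leave the $M_1/M_2$-assignment of every doubled edge, every cycle, and the path through $u$ untouched, and interchange the two matchings along the other path. This fixes the edge-multiset $S$, hence preserves weight, but flips which matching covers each endpoint of the switched path. A pair of type $(uv)(ws)$ (switch along the $w$--$s$ path) becomes a pair missing $\{u,s\}$ and $\{v,w\}$, i.e.\ lands in $\mathcal{M}(G-\{u,s\})\times\mathcal{M}(G-\{v,w\})$; a pair of type $(us)(vw)$ (switch along the $v$--$w$ path) lands in $\mathcal{M}(G-\{u,v\})\times\mathcal{M}(G-\{w,s\})$. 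Since switching is an involution on a path of $S$ and the superposition determines both the path structure and the set of singly-covered vertices, the map is invertible, which yields the desired identity.

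I expect the main obstacle to be making the planarity step fully rigorous: one must argue carefully that two vertex-disjoint paths in a plane graph whose endpoints interleave on a common face (the crossing pairing $(uw)(vs)$ on the right-hand families) simply cannot be realized, and then check that the path-switching map is well-defined into the correct deleted-vertex graphs and is a genuine bijection rather than merely a weight match on each side. Everything else reduces to bookkeeping with vertex degrees and bipartite colors in $S$.
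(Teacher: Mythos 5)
The paper does not actually prove this lemma; it is imported verbatim as Theorem 5.4 of \cite{Kuo}, so there is no internal proof to compare against. Your argument is, in substance, Kuo's own graphical-condensation proof, and it is correct: the superposition $S=M_1\cup M_2$ has degree $1$ exactly at the four marked vertices, bipartiteness forces every alternating path between two $V_1$-vertices to have even length and hence to join a vertex whose end-edge lies in $M_2$ (one of $u,w$) to one whose end-edge lies in $M_1$ (one of $v,s$), planarity eliminates the interleaved pairing $(uw)(vs)$ on the two right-hand families, and switching the $M_1/M_2$ labels along the path avoiding $u$ is a weight-preserving involution matching the two types of left-hand pairs with the two right-hand products. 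Your identification of the two potential soft spots (the Jordan-curve argument for excluding crossing path pairs, and verifying that the switched pair lands in the correct deleted-vertex graphs) is accurate, and both are routine to complete. One point you should make explicit rather than fix silently: the identity as printed in the paper is misstated --- its left-hand side reads $\M(G-\{v,w\})\M(G-\{v,s\})$, whereas Kuo's Theorem 5.4 (and the version your proof establishes, and the version the paper actually uses when it converts the recurrence into the six semi-hexagon identities, where the first graph is $G-\{u,w\}$) has left-hand side $\M(G-\{u,w\})\M(G-\{v,s\})$. So you have proved the correct statement, not the literal one quoted.
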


A \emph{forced lozenge} of the region $R$ is a lozenge contained in any tilings of $R$. Assume that we remove forced lozenges $l_1,l_2,\dots,l_k$ from $R$ and get a new region $R'$, then we have
\begin{equation}
\M(R)=\left(\prod_{i=1}^{k}\wt(l_i)\right) \cdot \M(R'),
\end{equation}
where $\wt(l_i)$ is the weight of the removed lozenge $l_i$.

A region in the triangular lattice\footnote{We only consider regions in the triangular lattice in this paper. From now on, we will use the term``region(s)" to mean ``region(s) in the triangular lattice".} must have  the same number of up-pointing and down-pointing  unit triangles  to admit  a tiling.  We call such a region \emph{balanced}.  The following simple lemma is especially useful when enumerating tilings.

\begin{lem}[Region-splitting Lemma \cite{Tri19,Tri18}]\label{RS}
Assume $R$ is a balanced region  and $Q$  is a subregion of $R$ satisfying two conditions:
\begin{enumerate}
\item The unit triangles in $Q$ lying on the  boundary between $Q$ and $R\setminus Q$ have the same orientation (all are up-pointing or all are down-pointing);
\item $Q$ is balanced.
\end{enumerate}
Then we have $\M(R)=\M(Q) \cdot \M(R\setminus Q)$.
\end{lem}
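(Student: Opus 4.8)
The plan is to show that the two hypotheses force every lozenge, in every tiling of $R$, to lie entirely within $Q$ or entirely within $R\setminus Q$. Once this ``no-crossing'' property is established, the multiplicativity of the weight will immediately give $\M(R)=\M(Q)\cdot\M(R\setminus Q)$. So the whole argument reduces to ruling out lozenges that straddle the common boundary, and the engine for that is a parity count powered by the balancedness of $Q$.

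First I would set up that count. Without loss of generality assume all unit triangles of $Q$ lying on the common boundary with $R\setminus Q$ are up-pointing (condition (1)). Fix a tiling $T$ of $R$ and split the lozenges of $T$ that meet $Q$ into two types: those lying entirely inside $Q$ (say $p$ of them) and those straddling the boundary, i.e. having one triangle in $Q$ and one in $R\setminus Q$ (say $c$ of them). The triangle of a straddling lozenge that lies in $Q$ shares an edge with a triangle outside $Q$, hence is a boundary triangle of $Q$, and therefore is up-pointing. Consequently each straddling lozenge covers exactly one up-pointing triangle of $Q$ and no down-pointing triangle of $Q$, whereas each interior lozenge covers exactly one up- and one down-pointing triangle of $Q$. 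Since $T$ tiles $R\supseteq Q$, every triangle of $Q$ is covered, so $Q$ has $p+c$ up-pointing and $p$ down-pointing triangles.

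The heart of the proof is then the appeal to condition (2): since $Q$ is balanced, its numbers of up- and down-pointing triangles coincide, so $p+c=p$ and hence $c=0$. Thus no lozenge of $T$ straddles the boundary, and $T$ restricts to a tiling $T|_{Q}$ of $Q$ together with a tiling $T|_{R\setminus Q}$ of $R\setminus Q$. Conversely, any tiling of $Q$ and any tiling of $R\setminus Q$ glue along the common boundary into a tiling of $R$, because their lozenges are disjoint and jointly cover every triangle of $R$. This yields a bijection between the tilings of $R$ and the pairs consisting of a tiling of $Q$ and a tiling of $R\setminus Q$.

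Finally I would invoke that the weight of a tiling is the product of its lozenge-weights. Under this bijection $\wt(T)=\wt(T|_{Q})\cdot\wt(T|_{R\setminus Q})$, so summing over all tilings and factoring the resulting double sum gives $\M(R)=\M(Q)\cdot\M(R\setminus Q)$; the degenerate cases, where one of the two subregions admits no tiling and $\M(R)=0$, are handled by the same bijection. The only genuinely delicate point is the claim that the $Q$-triangle of a straddling lozenge must be a boundary triangle of $Q$, and hence up-pointing: this is precisely where condition (1) enters, and it is exactly what makes the parity count collapse. Beyond this observation no further geometric input is required.
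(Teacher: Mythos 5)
Your proof is correct: the parity count showing that balancedness of $Q$ forces $c=0$ boundary-crossing lozenges, followed by the restriction/gluing bijection and weight multiplicativity, is exactly the standard argument for this lemma (the paper itself only cites the result, and the cited sources prove it the same way). No gaps; the one delicate step you flag --- that the $Q$-triangle of a straddling lozenge is a boundary triangle and hence has the prescribed orientation --- is handled correctly.
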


\subsection{Four basic enumerations} 

\begin{figure}
\setlength{\unitlength}{3947sp}%
\begingroup\makeatletter\ifx\SetFigFont\undefined%
\gdef\SetFigFont#1#2#3#4#5{%
  \reset@font\fontsize{#1}{#2pt}%
  \fontfamily{#3}\fontseries{#4}\fontshape{#5}%
  \selectfont}%
\fi\endgroup%
\resizebox{!}{6cm}{
\begin{picture}(0,0)%
\includegraphics{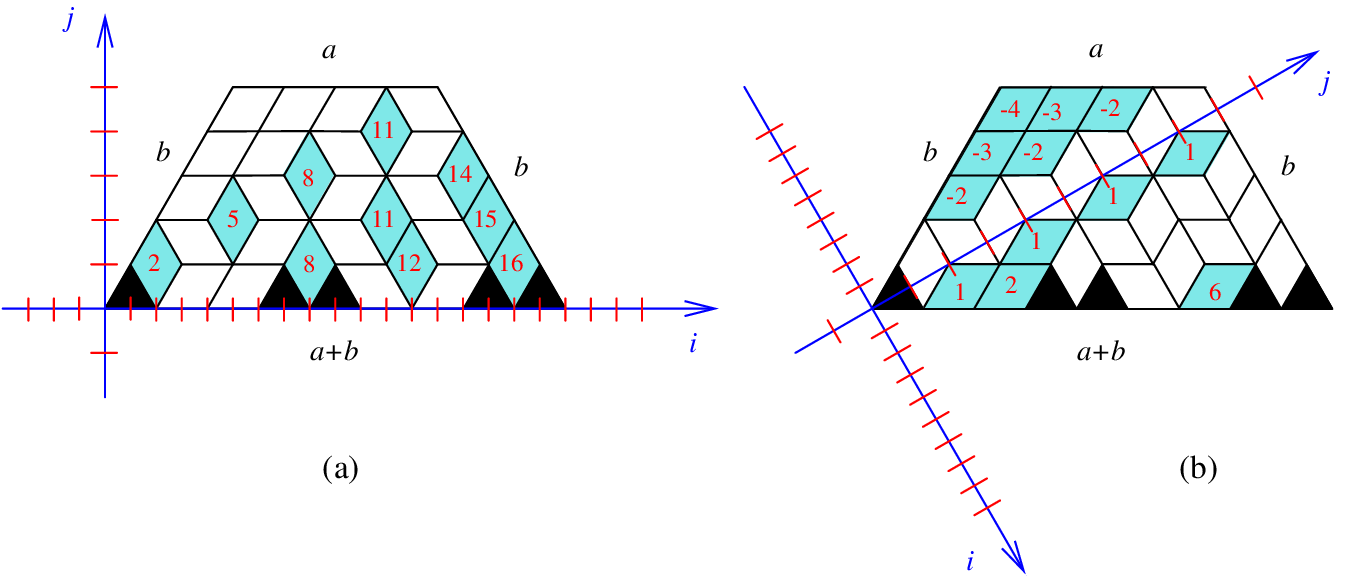}%
\end{picture}%
%
%
\begin{picture}(10734,4690)(388,-4431)
\put(1299,-2180){\makebox(0,0)[lb]{\smash{{\SetFigFont{14}{16.8}{\rmdefault}{\mddefault}{\itdefault}{\color[rgb]{1,1,1}$s_1$}%
}}}}
\put(2536,-2180){\makebox(0,0)[lb]{\smash{{\SetFigFont{14}{16.8}{\rmdefault}{\mddefault}{\itdefault}{\color[rgb]{1,1,1}$s_2$}%
}}}}
\put(2941,-2180){\makebox(0,0)[lb]{\smash{{\SetFigFont{14}{16.8}{\rmdefault}{\mddefault}{\itdefault}{\color[rgb]{1,1,1}$s_3$}%
}}}}
\put(4584,-2180){\makebox(0,0)[lb]{\smash{{\SetFigFont{14}{16.8}{\rmdefault}{\mddefault}{\itdefault}{\color[rgb]{1,1,1}$s_5$}%
}}}}
\put(4164,-2180){\makebox(0,0)[lb]{\smash{{\SetFigFont{14}{16.8}{\rmdefault}{\mddefault}{\itdefault}{\color[rgb]{1,1,1}$s_4$}%
}}}}
\put(7453,-2180){\makebox(0,0)[lb]{\smash{{\SetFigFont{14}{16.8}{\rmdefault}{\mddefault}{\itdefault}{\color[rgb]{1,1,1}$s_1$}%
}}}}
\put(8675,-2180){\makebox(0,0)[lb]{\smash{{\SetFigFont{14}{16.8}{\rmdefault}{\mddefault}{\itdefault}{\color[rgb]{1,1,1}$s_2$}%
}}}}
\put(9073,-2180){\makebox(0,0)[lb]{\smash{{\SetFigFont{14}{16.8}{\rmdefault}{\mddefault}{\itdefault}{\color[rgb]{1,1,1}$s_3$}%
}}}}
\put(10310,-2180){\makebox(0,0)[lb]{\smash{{\SetFigFont{14}{16.8}{\rmdefault}{\mddefault}{\itdefault}{\color[rgb]{1,1,1}$s_4$}%
}}}}
\put(10708,-2180){\makebox(0,0)[lb]{\smash{{\SetFigFont{14}{16.8}{\rmdefault}{\mddefault}{\itdefault}{\color[rgb]{1,1,1}$s_5$}%
}}}}
\end{picture}}
\caption{Two ways to assign weights to the lozenges of a semi-hexagon with  dents on the base.  The shaded lozenges passed by the $j$-axis are weighted by $\frac{X+Y}{2}$, the ones with label $k$ are weighted by  $\frac{Xq^{k}+Yq^{-k}}{2}$.}\label{Fig:semibase}
\end{figure}

We also need the following four basic enumerations for our proofs.

We consider a semi-hexagon of side-lengths $a, b,a+b,b$. We remove $b$ up-pointing unit triangles  along the base at the positions $s_1,s_2,\dots,s_b$ as they appear from left to right. We now assign weights to the vertical lozenges of the dented semi-hexagon as in Figure \ref{Fig:semibase}(a). In particular, the vertical lozenges with center at the point $(i,j)$ are weighted by $\frac{Xq^{i}+Yq^{-i}}{2}$. All other lozenges are weighted by $1$. 
Denote by $S_{a,b}(s_1,s_2,\dots,s_b)$ the resulting weighted region. 

It is worth noticing that the  tiling number of this dented semi-hexagon was first provided by Cohn, Larsen, and Propp \cite[Proposition 2.1]{CLP}.
A weighted version of Cohn--Larsen--Propp's result can be found in \cite[pp. 374--375]{Stanley}, in terms of  the column-strict plane partitions (or reverse semi-standard Young tableaux). The following  lemma was proved implicitly in \cite{Borodin}. 

\begin{lem}\label{semilem1} For non-negative integers $a,b$ and a sequence  $(s_i)_{i=1}^{b}$ of positive integers between $1$ and $a+b$, we have
\begin{align}\label{semieq1}
\M(S_{a,b}(s_1,s_2,\dots,s_b))&=2^{-\binom{b}{2}}q^{\sum_{i=1}^{b}(b-1)(i+1/2-2s_i)}\prod_{1\leq i < j \leq b}\frac{q^{2s_j}-q^{2s_i}}{q^{2j}-q^{2i}}\prod_{i=1}^{b}\prod_{j=1}^{i-1}(q^{2(s_i+s_j-1)}X+Y).
\end{align}
\end{lem}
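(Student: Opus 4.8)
The plan is to realize the weighted tiling generating function as a single determinant via the Lindström--Gessel--Viennot (LGV) lemma and then evaluate that determinant by recognizing it as a bialternant of $BC$ type. First I would reuse the lozenge-path encoding from the tile-ability arguments (as in Figures \ref{Fig:Semitwodent2} and \ref{Fig:Semitwodent6}): each tiling of $S_{a,b}(s_1,\dots,s_b)$ is equivalent to a family of $b$ pairwise non-intersecting upward paths, one emanating from each base dent at the ordered positions $s_1<\cdots<s_b$ and terminating on a fixed, linearly ordered set of $b$ sinks dictated by the upper boundary of the trapezoid. Since the region is planar and both sources and sinks are linearly ordered, only the identity permutation survives in the LGV expansion, so $\M(S_{a,b}(s_1,\dots,s_b))=\det_{1\le i,j\le b}(M_{ij})$, where $M_{ij}$ is the weighted sum over single paths from source $i$ to sink $j$. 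Because the weight is carried only by the vertical lozenges, with the position-dependent value $\tfrac12(Xq^{k}+Yq^{-k})$, each $M_{ij}$ is an explicit one-parameter sum.

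Second, I would compute the entries $M_{ij}$ in closed form. A single upward path contributes the product of $\tfrac12(Xq^{k}+Yq^{-k})$ over the horizontal positions $k$ of the vertical lozenges it traverses, and summing over all such paths telescopes. Writing $t_i:=q^{2(s_i-1)}$, I expect each entry to reduce, up to a factor $2^{-(j-1)}$ and a power of $q$ depending only on the row $i$, to the symmetric Laurent shape $X\,t_i^{\,\mu_j}+Y\,t_i^{-\mu_j}$ for an appropriate increasing sequence $\mu_j$ fixed by the sink positions. In other words, after pulling out row-common factors, the LGV matrix is an alternant built from the two-parameter monomials $X\,t^{\mu}+Y\,t^{-\mu}$ rather than from ordinary powers $t^{\mu}$.

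Third, I would evaluate this alternant. The determinant $\det_{i,j}\bigl(X\,t_i^{\,\mu_j}+Y\,t_i^{-\mu_j}\bigr)$ is a bialternant of $BC$ type, whose Weyl-denominator factor is $\prod_{1\le i<j\le b}(t_j-t_i)(X\,t_it_j+Y)$. Re-substituting $t_i=q^{2(s_i-1)}$ turns the first factor into $\prod_{i<j}(q^{2s_j}-q^{2s_i})$ up to a power of $q$, and the second into the cross-term product $\prod_{i}\prod_{j<i}(q^{2(s_i+s_j-2)}X+Y)$ appearing in the statement. The remaining constants---the $2^{-\binom{b}{2}}$, the overall power $q^{\sum_i(b-1)(i+1-2s_i)}$, and the denominator $\prod_{i<j}(q^{2j}-q^{2i})$---are then pinned down by normalizing against the reference sink configuration, i.e. by tracking the row-common factors extracted in the previous step and the value of the Weyl denominator at the minimal positions $s_i=i$ (where $t_i=q^{2(i-1)}$ gives exactly $\prod_{i<j}(q^{2j}-q^{2i})$ up to a $q$-power).

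The step I expect to be the main obstacle is the second together with the $BC$-type factorization in the third: one must identify the single-path generating functions precisely as the monomials $X\,t_i^{\,\mu_j}+Y\,t_i^{-\mu_j}$ with the correct (likely half-integer) offsets $\mu_j$, and then carry out a $BC$-type---rather than the ordinary type-$A$ Vandermonde---determinant evaluation while bookkeeping the $X,Y$ dependence and all the powers of $2$ and of $q$. The cross product $\prod_{i<j}(q^{2(s_i+s_j-2)}X+Y)$ is exactly the signature that the relevant Weyl denominator is orthogonal/symplectic rather than Vandermonde, so the crux is to arrange the entries so that this factorization applies cleanly; as a consistency check, the specialization $X=Y=1$, $q\to1$ must collapse the whole expression to the classical Cohn--Larsen--Propp count $\prod_{i<j}\frac{s_j-s_i}{j-i}$ \cite{CLP}. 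A shortcut, if one is willing to quote external machinery, is to specialize the elliptic determinant evaluation of \cite{Borodin}, of which the present weight is the stated trigonometric degeneration; I would keep the LGV/bialternant route as the self-contained argument and use \cite{Borodin} only as a cross-check.
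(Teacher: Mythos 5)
Your overall plan---encode tilings as non-intersecting lattice paths, apply Lindstr\"om--Gessel--Viennot to get a determinant, and evaluate that determinant in product form---is a legitimate route to formulas of this shape (it is essentially how the elliptic ancestor of this lemma is obtained in \cite{Borodin}), and it is genuinely different from the paper's proof, which avoids determinants entirely: the paper argues by induction on $a+b+t$, handles the cases $a=0$, $b=0$, $t=0$ by forcing, and in the inductive step applies Kuo condensation (Lemma \ref{kuothm1}) to produce a quadratic three-term recurrence that the right-hand side of (\ref{semieq1}) is then checked to satisfy. However, your steps 2 and 3 contain a concrete error that would stop the computation. The target (\ref{semieq1}) is homogeneous of degree $\binom{b}{2}$ in $(X,Y)$ --- every tiling of $S_{a,b}(\mathbf{s})$ contains exactly $\binom{b}{2}$ vertical lozenges, which is also what the prefactor $2^{-\binom{b}{2}}$ records --- whereas the bialternant $\det_{1\le i,j\le b}\bigl(X\,t_i^{\mu_j}+Y\,t_i^{-\mu_j}\bigr)$ is homogeneous of degree $b$ in $(X,Y)$; these agree only when $b=3$. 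The source of the mismatch is in step 2: the path feeding column $j$ traverses $j-1$ vertical lozenges (consistently with the factor $2^{-(j-1)}$ you extract), so the entry $M_{ij}$ is a homogeneous polynomial of degree $j-1$ in $(X,Y)$, namely a sum of products of $j-1$ linear factors $Xq^{k}+Yq^{-k}$; no telescoping can collapse that to the degree-one binomial $X\,t_i^{\mu_j}+Y\,t_i^{-\mu_j}$. Relatedly, the asserted factorization of $\det_{i,j}(Xt_i^{\mu_j}+Yt_i^{-\mu_j})$ with Weyl-denominator factor $\prod_{i<j}(t_j-t_i)(Xt_it_j+Y)$ cannot hold: already for $b=2$ the determinant has the nonzero $X^2$-coefficient $t_1^{\mu_1}t_2^{\mu_2}-t_1^{\mu_2}t_2^{\mu_1}$, while the proposed right-hand side is linear in $X$.

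The repair is to keep the LGV determinant but with the entries in their true ``factorial'' form
\begin{equation*}
M_{ij}=2^{-(j-1)}\,q^{(\ast)}\prod_{k=1}^{j-1}\bigl(Xq^{c_k(i)}+Yq^{-c_k(i)}\bigr),
\end{equation*}
provided the single-path sum really does telescope to such a product (this needs proof; it is the analogue of the one-row forcing you can check for $b=1$, where $\M=1$), and then to invoke a determinant evaluation for factorial $BC$-type entries in the style of Krattenthaler/Warnaar rather than the naive bialternant; that is where the cross product $\prod_{i<j}(q^{2(s_i+s_j-2)}X+Y)$ actually comes from. Your proposed consistency check at $X=Y=1$, $q\to 1$ against the Cohn--Larsen--Propp count \cite{CLP} is a good one, but note that it is blind to the $(X,Y)$-degree and so would not by itself catch this error; comparing $(X,Y)$-degrees at $b=2$ does.
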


Next, we consider a variation of the above weighted region. We now weight the lozenges of the semi-hexagon differently as in Figure  \ref{Fig:semibase}(b). Denote by $S'_{a,b}(s_1,s_2,\dots,s_b)$ the new weighted region.  We have the following counterpart of Lemma \ref{semilem1}. 

\begin{lem}\label{semilem2} For non-negative integers $a,b$ and a sequence  $(s_i)_{i=1}^{b}$ of positive integers between $1$ and $a+b$
\begin{align}\label{semieq2}
\M(S'_{a,b}(s_1,s_2,\dots,s_b))&=2^{\sum_{i=1}^{b}(i-s_i)}q^{\sum_{i=1}^{b}\frac{(i-s_i)(s_i-3+i)}{2}}\prod_{1\leq i < j \leq b}\frac{q^{2s_j}-q^{2s_i}}{q^{2j}-q^{2i}}\prod_{i=1}^{b}\prod_{j=1}^{s_i-i}(q^{2(i+j-b-1)}X+Y).
\end{align}
\end{lem}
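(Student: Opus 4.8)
The plan is to prove Lemma \ref{semilem2} by the same structural route that establishes Lemma \ref{semilem1}, adapting it to the new weight assignment in Figure \ref{Fig:semibase}(b). The essential point is that the two weightings differ only in the placement of the $j$-axis (equivalently, a shift in the exponent of $q$ attached to each vertical lozenge) and in the normalizing factor on the axis-crossing lozenges. Since the underlying shape $S'_{a,b}(s_1,\dots,s_b)$ is the same dented semi-hexagon whose unweighted tiling count is the Cohn--Larsen--Propp determinant, I would first recall the bijection (already used in the proofs of Lemmas \ref{tileability1} and \ref{tileability2}) between tilings and families of $b$ non-intersecting lozenge paths, each starting at a base-dent $s_i$ and ending on the slanted top side. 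By the Lindstr\"om--Gessel--Viennot lemma this weighted path count is a determinant, and MacMahon-type evaluation (or the elliptic-weight computation of \cite{Borodin}) reduces it to the product of a Vandermonde-like factor $\prod_{i<j}(q^{2s_j}-q^{2s_i})/(q^{2j}-q^{2i})$ times a power of $q$ and a product over the vertical lozenges.

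The cleaner route, which I expect to be the one intended, is to obtain Lemma \ref{semilem2} directly from Lemma \ref{semilem1} by a change of variables rather than redoing the determinant from scratch. Comparing Figures \ref{Fig:semibase}(a) and (b), the two weight systems assign to a vertical lozenge centered at $(i,j)$ the weights $\tfrac{Xq^{i}+Yq^{-i}}{2}$ and (after the horizontal shift of the $i$-origin by one half-step) a rescaled version thereof; tracking how each tiling's total weight transforms under this shift should produce a global monomial in $q$, $X$, $Y$, and $2$ depending only on the multiset of vertical-lozenge positions, which in turn depends only on $(s_i)$ and not on the individual tiling. First I would make precise the relation between the $i$-coordinate of a vertical lozenge under the two axis conventions, then verify that in any tiling the number of vertical lozenges in each ``column'' (fixed $i$) is determined by the $s_i$ alone — this is exactly the content of the path encoding, since the number of vertical lozenges is conserved across tilings of a fixed region. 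Granting that, $\M(S'_{a,b})$ equals $\M(S_{a,b})$ times an explicit monomial correction, and substituting formula \eqref{semieq1} yields \eqref{semieq2} after simplifying the two exponents of $q$ and the two $X,Y$-products.

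Concretely, the target product $\prod_{i=1}^{b}\prod_{j=1}^{s_i-i}(q^{2(i+j-b-1)}X+Y)$ in \eqref{semieq2} should be matched against $\prod_{i=1}^{b}\prod_{j=1}^{i-1}(q^{2(s_i+s_j-2)}X+Y)$ in \eqref{semieq1}: both are products of $\sum_i (\text{something})$ binomial factors of the form $q^{2c}X+Y$, and the bookkeeping is to show the two collections of exponents $c$ coincide as multisets up to the overall monomial shift. The prefactors $2^{-\binom{b}{2}}$ versus $2^{\sum_i(i-s_i)}$ and the two $q$-exponents $\sum_i (b-1)(i+1-2s_i)$ versus $\sum_i \tfrac{(i-s_i)(s_i-3+i)}{2}$ must then be reconciled by the same global correction factor. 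The main obstacle, and the step deserving the most care, is precisely this exponent reconciliation: verifying that the per-lozenge weight ratio, summed over the fixed column-occupation numbers, telescopes into exactly the difference of the stated closed forms. This is a finite but delicate algebraic identity, and I would organize it by isolating the $X$-degree contribution (which fixes the number and exponents of the binomial factors) separately from the pure power-of-$q$ and power-of-$2$ contributions, checking each against the claimed formula; the Vandermonde quotient $\prod_{i<j}(q^{2s_j}-q^{2s_i})/(q^{2j}-q^{2i})$ is common to both and needs no adjustment, which is a useful consistency check that the path structure is genuinely unchanged.
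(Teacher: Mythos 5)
Your second route---deriving Lemma \ref{semilem2} from Lemma \ref{semilem1} by a global correction factor---rests on a claim that is false. Conservation of the \emph{total} number of lozenges of each orientation does not give conservation column by column, and the positions of the weighted lozenges are genuinely tiling-dependent. Concretely, take $a=1$, $b=2$ with the two dents at the two ends of the base: the region has exactly two tilings, each containing a single vertical lozenge, but the two lozenges sit in mirror-image columns on opposite sides of the $j$-axis, so they contribute the distinct binomials $\tfrac{Xq+Yq^{-1}}{2}$ and $\tfrac{Xq^{-1}+Yq}{2}$; no per-tiling monomial can convert one weighting into the other. The reduction also fails at the level of the closed formulas themselves: \eqref{semieq1} is homogeneous of degree $\binom{b}{2}$ in $(X,Y)$ while \eqref{semieq2} is homogeneous of degree $\sum_{i=1}^{b}(s_i-i)$, and these differ in general (e.g.\ $b=2$, $(s_1,s_2)=(2,3)$). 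This shows your starting premise---that the two weightings ``differ only in the placement of the $j$-axis''---is also wrong: a pure axis shift would amount to the substitution $Y\mapsto Yq^{2c}$ times a fixed power of $q$ and would preserve the $(X,Y)$-degree, whereas $S'_{a,b}$ in fact places the nontrivial weights on a different orientation of lozenges (compare the definition of $S'_x$, which ``re-assigns weights to the \emph{right} lozenges''). Consequently the two binomial products cannot ``coincide as multisets up to an overall monomial shift,'' since they do not even have the same number of factors; the shared Vandermonde quotient is not evidence to the contrary.

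Your first route (non-intersecting paths plus Lindstr\"om--Gessel--Viennot) is legitimate in principle but is not carried out: the determinant evaluation is the entire difficulty, you give no argument for it, and the paper explicitly notes that, unlike Lemma \ref{semilem1}, no proof of Lemma \ref{semilem2} exists in the literature to lean on. The paper's own argument is different: it proves Lemma \ref{semilem1} in the Appendix by induction on $a+b+t$, using Kuo condensation (Lemma \ref{kuothm1}) applied to the region with one dent filled together with forced-lozenge removals in the base cases, and obtains Lemma \ref{semilem2} by running the identical induction with the second weight assignment (the three-term recurrence is weight-insensitive because all forced lozenges involved have weight $1$; only the base-case evaluations change). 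To repair your proposal you would need either to carry out that induction---verifying that the right-hand side of \eqref{semieq2} satisfies the same Kuo recurrence---or to actually evaluate the LGV determinant for the second weighting.
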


\begin{figure}\centering
\includegraphics[width=10cm]{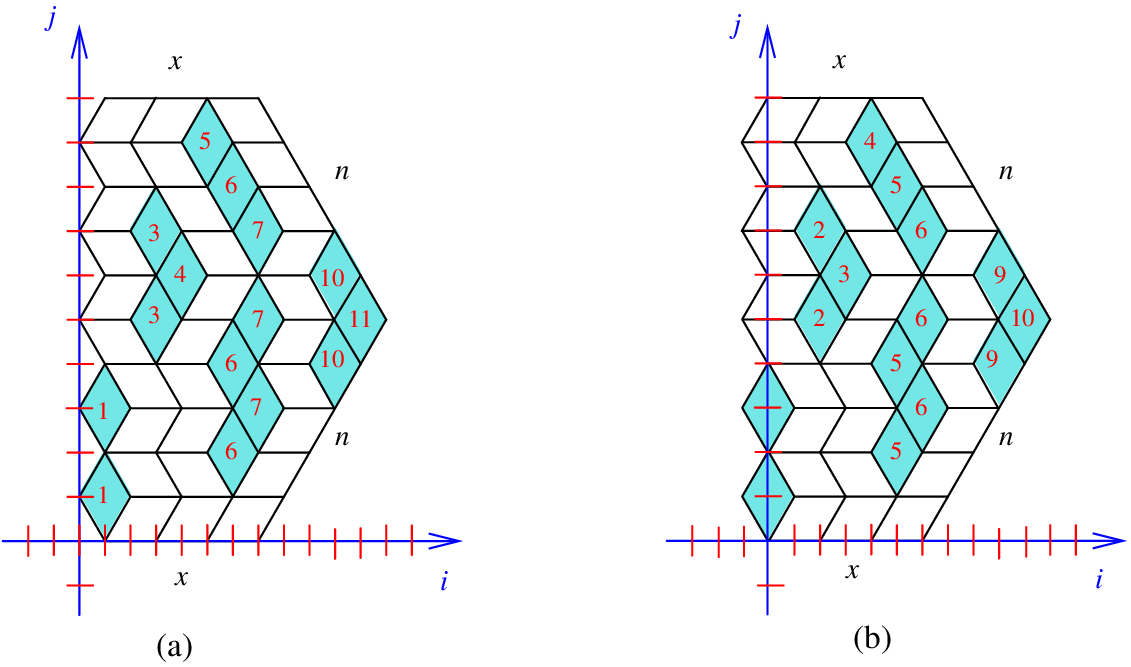}
\caption{Two ways to assign weights to lozenges of a halved hexagon. The shaded lozenges passed by the $j$-axis are weighted by $\frac{1}{2}$, the ones with label $k$ are weighted by  $\frac{q^{k}+q^{-k}}{2}$.}\label{Fig:halvedhex}
\end{figure}

Next, we consider a pentagonal region with side-lengths $x,n,n,x,2n$, whose vertical right  side runs along a zigzag path with $2n$ steps.  We now assign the weight to lozenges of the region as in Figure \ref{Fig:halvedhex}(a): the vertical lozenges with center at the point $(i,j)$ are weighted by $\frac{q^{i}+q^{-i}}{2}$; other lozenges are all weighted by $1$. Denote by $P_{x,n}$ the resulting weighted region. We usually call  $P_{n,x}$ a (weighted) \emph{halved hexagon}, as it can be viewed as half of a symmetric hexagon of side-lengths $2x+1,n,n,2x+1,n,n$ divided  along a vertical zigzag cut.

The \emph{$q$-integer} $[n]_q$ is defined as $[n]_q=1+q+\cdots+q^{n-1}$, where $[0]_q=0$. Then the \emph{$q$-factorial} is defined to be  the product of consecutive $q$-integers: $[n]_q!=[1]_q[2]_q\cdots[n]_q$, where $[0]_q!=1$.

\begin{lem}\label{proctorlem} Assume that $x,n$ are non-negative integers. Then we have
\begin{align}
\M(P_{x,n})=\frac{2^{-n^2}q^{-\sum_{i=1}^{n}(2i-1)(2x+i)}}{[1]_{q^2}![3]_{q^2}!\cdots[2n-1]_{q^2}!}\prod_{i=1}^{n}[4(x+i)]_{q^2}\prod_{1\leq i <j\leq n}[2(2x+i+j)]_{q^2}[2(j-i)]_{q^2}.
\end{align}
\end{lem}

We also consider a variant of $P_{x,n}$ obtained by re-assigning the lozenge-weights as in Figure \ref{Fig:weight2}(b). In particular, the vertical lozenges with center at the point $(i,j)$ are still weighted by $\frac{q^{i}+q^{-i}}{2}$, except for the ones intersected by the $j$-axis, which are weighted by $1/2$ (\emph{not} $1=\frac{q^{0}+q^{-0}}{2}$). Denote by $P'_{x,n}$ the new weighted region.

\begin{lem}\label{proctorlem2} Assume that $x,n$ are non-negative integers. Then we have
\begin{align}
\M(P'_{x,n})=\frac{2^{-n^2}q^{-\sum_{i=1}^{n}(2i-1)(2x+i-1)}}{[1]_{q^2}![3]_{q^2}!\cdots[2n-1]_{q^2}!}\prod_{i=1}^{n}[4(x+i)-2]_{q^2}\prod_{1\leq i <j\leq n}[2(2x+i+j-1)]_{q^2}[2(j-i)]_{q^2}.
\end{align}
\end{lem}

 Lemmas \ref{semilem2}, \ref{proctorlem}, and \ref{proctorlem2} are fairly easy to prove. However, as the author's knowledge, there are no references for these lemmas in the literature. For the completeness of the paper, we will provide the proofs of these lemmas in the Appendix. 

\section{Proofs of Main Theorems}\label{Sec:Proof} 

\begin{figure}\centering
\includegraphics[width=12cm]{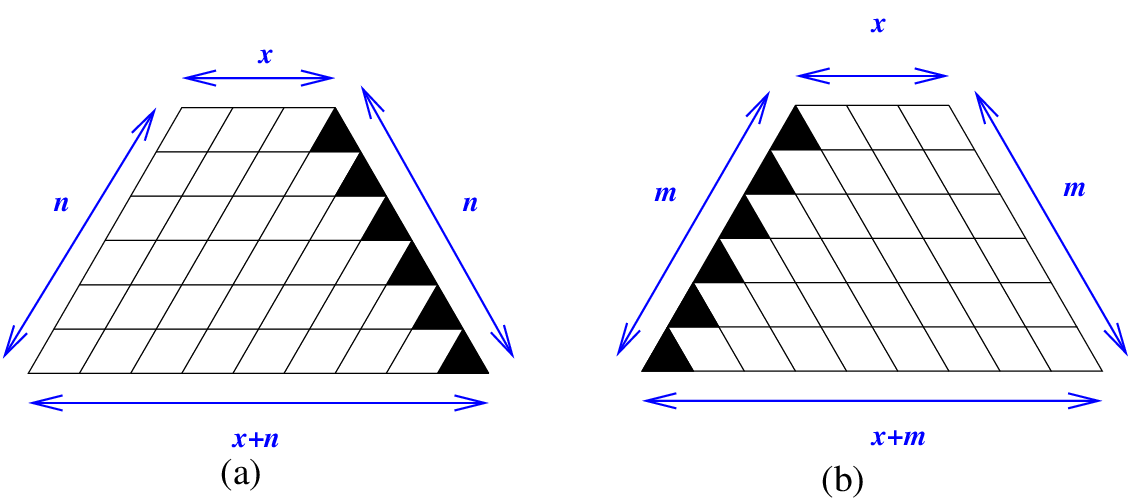}
\caption{Two base cases in the proof of Theorem \ref{semithm1}: (a) the case $m=0$ and (b) the case $n=0$.}\label{Fig:Semitwodent11}
\end{figure}

\begin{figure}\centering
\includegraphics[width=12cm]{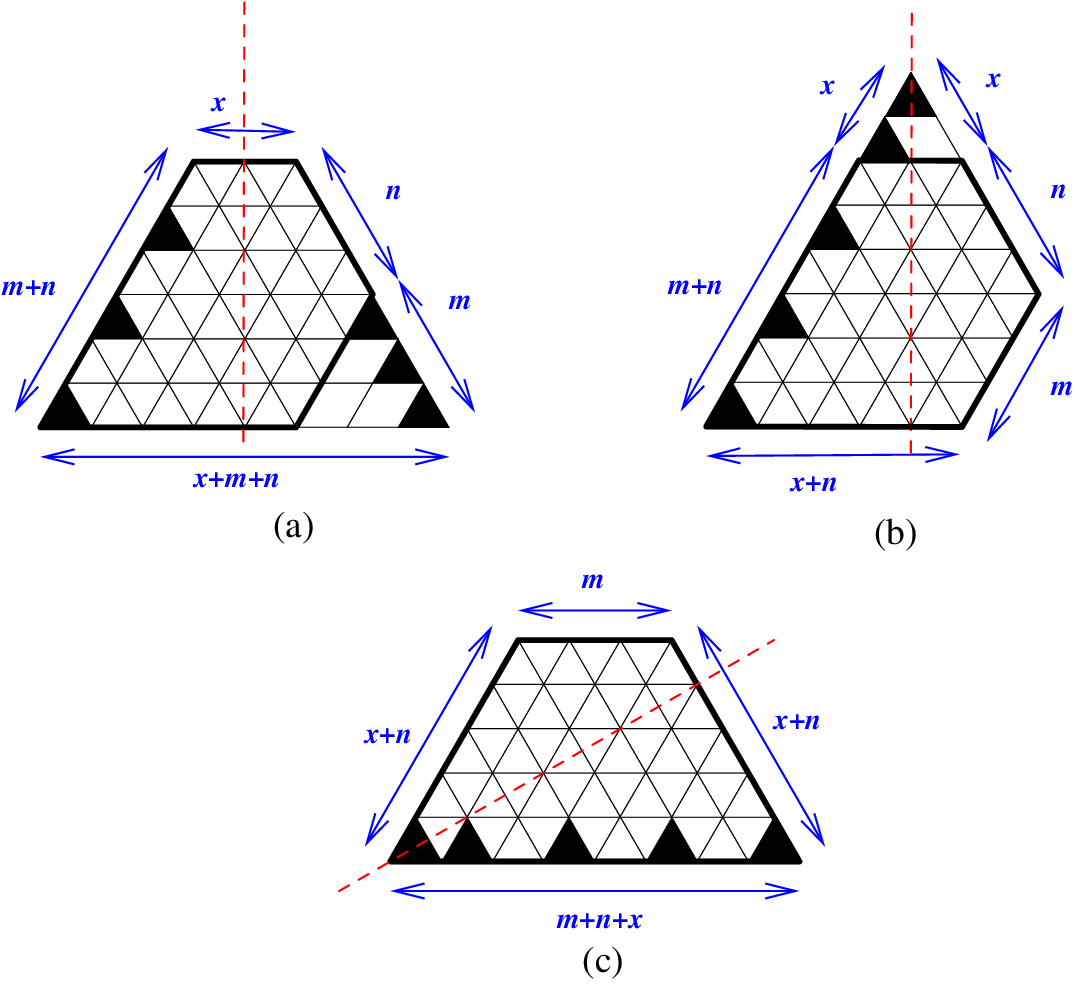}
\caption{The case $t=0$ in the proof of Theorem \ref{semithm1}.}\label{Fig:Semitwodent12}
\end{figure}

\begin{figure}\centering
\setlength{\unitlength}{3947sp}%
\begingroup\makeatletter\ifx\SetFigFont\undefined%
\gdef\SetFigFont#1#2#3#4#5{%
  \reset@font\fontsize{#1}{#2pt}%
  \fontfamily{#3}\fontseries{#4}\fontshape{#5}%
  \selectfont}%
\fi\endgroup%
\resizebox{!}{7cm}{
\begin{picture}(0,0)%
\includegraphics{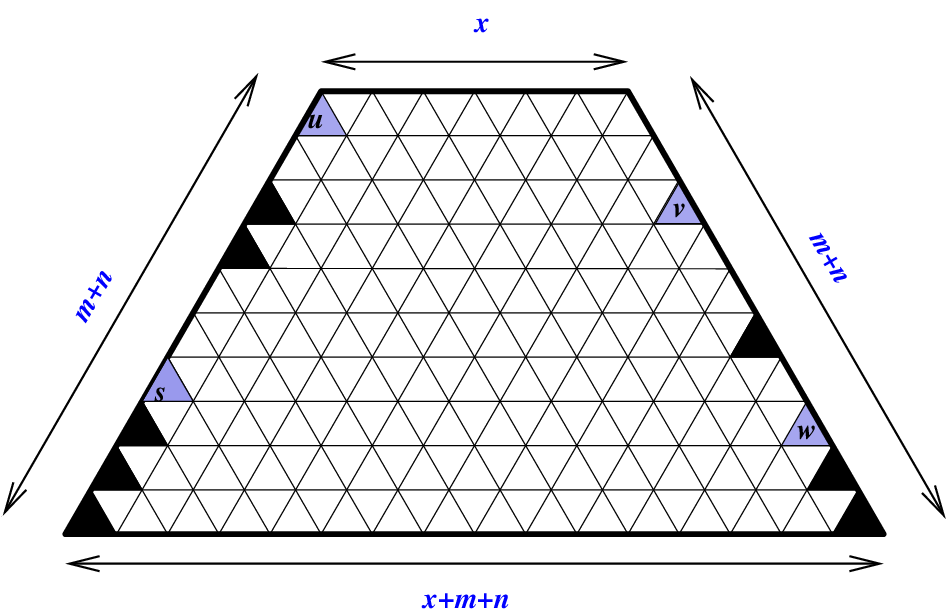}%
\end{picture}%
\begin{picture}(7588,4869)(7251,-4257)
\put(9066,-1441){\makebox(0,0)[lb]{\smash{{\SetFigFont{14}{16.8}{\rmdefault}{\bfdefault}{\updefault}{\color[rgb]{1,1,1}$a_2$}%
}}}}
\put(14011,-3581){\makebox(0,0)[lb]{\smash{{\SetFigFont{14}{16.8}{\rmdefault}{\bfdefault}{\updefault}{\color[rgb]{1,1,1}$b_4$}%
}}}}
\put(13776,-3251){\makebox(0,0)[lb]{\smash{{\SetFigFont{14}{16.8}{\rmdefault}{\bfdefault}{\updefault}{\color[rgb]{1,1,1}$b_3$}%
}}}}
\put(13191,-2161){\makebox(0,0)[lb]{\smash{{\SetFigFont{14}{16.8}{\rmdefault}{\bfdefault}{\updefault}{\color[rgb]{1,1,1}$b_2$}%
}}}}
\put(7826,-3561){\makebox(0,0)[lb]{\smash{{\SetFigFont{14}{16.8}{\rmdefault}{\bfdefault}{\updefault}{\color[rgb]{1,1,1}$a_6$}%
}}}}
\put(8036,-3201){\makebox(0,0)[lb]{\smash{{\SetFigFont{14}{16.8}{\rmdefault}{\bfdefault}{\updefault}{\color[rgb]{1,1,1}$a_5$}%
}}}}
\put(8246,-2861){\makebox(0,0)[lb]{\smash{{\SetFigFont{14}{16.8}{\rmdefault}{\bfdefault}{\updefault}{\color[rgb]{1,1,1}$a_4$}%
}}}}
\put(9286,-1091){\makebox(0,0)[lb]{\smash{{\SetFigFont{14}{16.8}{\rmdefault}{\bfdefault}{\updefault}{\color[rgb]{1,1,1}$a_1$}%
}}}}
\put(8026,-2439){\makebox(0,0)[lb]{\smash{{\SetFigFont{14}{16.8}{\rmdefault}{\bfdefault}{\updefault}{\color[rgb]{0,0,0}$a_3$}%
}}}}
\put(12871,-1036){\makebox(0,0)[lb]{\smash{{\SetFigFont{14}{16.8}{\rmdefault}{\bfdefault}{\updefault}{\color[rgb]{0,0,0}$b_1$}%
}}}}
\end{picture}}
\caption{How to apply Kuo condensation to a semi-hexagon with dents on two sides.}\label{Fig:Semitwodent3}
\end{figure}

\begin{figure}\centering
\setlength{\unitlength}{3947sp}%
\begingroup\makeatletter\ifx\SetFigFont\undefined%
\gdef\SetFigFont#1#2#3#4#5{%
  \reset@font\fontsize{#1}{#2pt}%
  \fontfamily{#3}\fontseries{#4}\fontshape{#5}%
  \selectfont}%
\fi\endgroup%
\resizebox{!}{17cm}{
\begin{picture}(0,0)%
\includegraphics{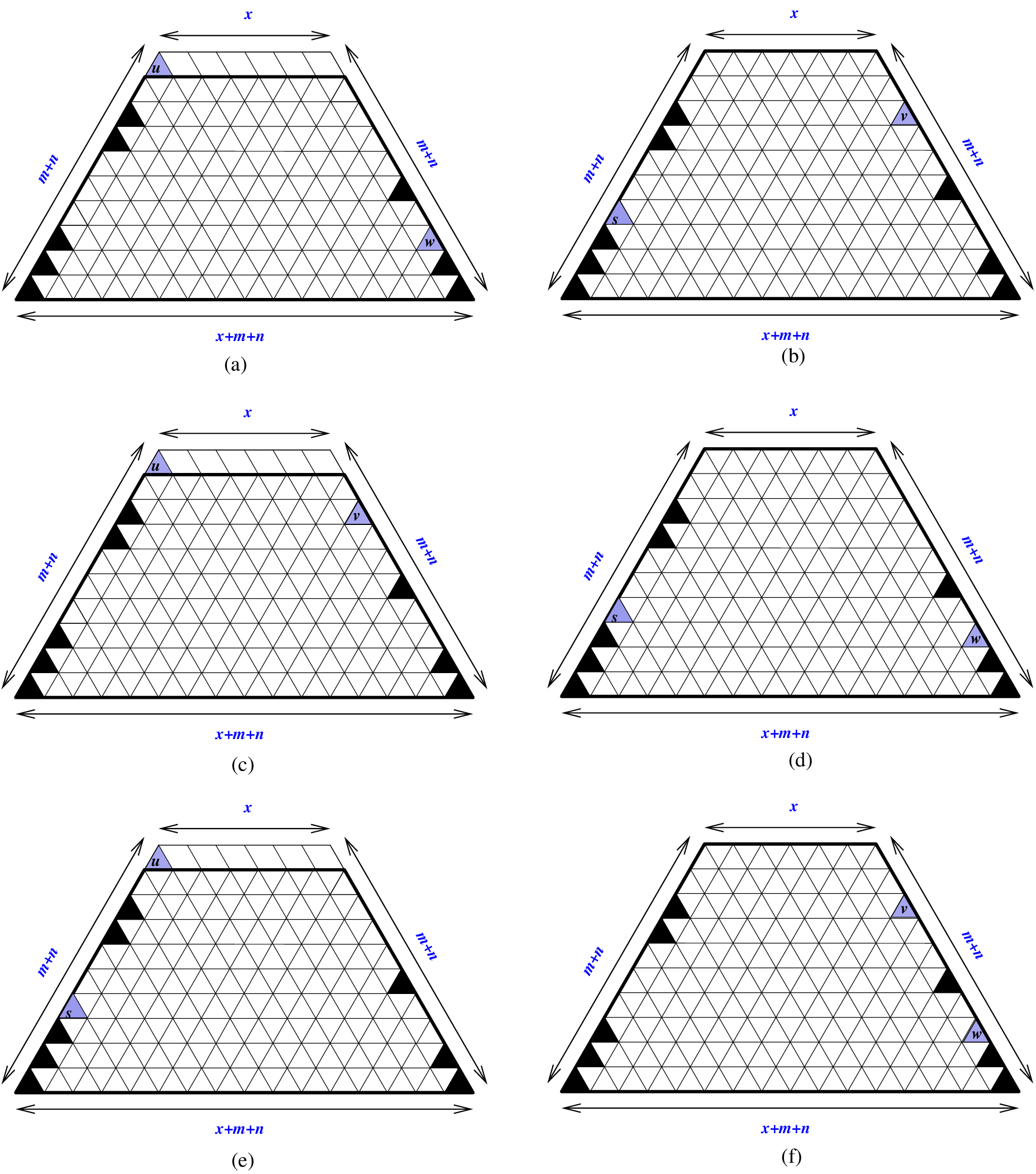}%
\end{picture}%
%
%

\begin{picture}(14821,16785)(169,-16152)
\put(657,-14539){\makebox(0,0)[lb]{\smash{{\SetFigFont{14}{16.8}{\rmdefault}{\bfdefault}{\updefault}{\color[rgb]{1,1,1}$a_5$}%
}}}}
\put(13634,-2136){\makebox(0,0)[lb]{\smash{{\SetFigFont{14}{16.8}{\rmdefault}{\bfdefault}{\updefault}{\color[rgb]{1,1,1}$b_2$}%
}}}}
\put(13627,-13484){\makebox(0,0)[lb]{\smash{{\SetFigFont{14}{16.8}{\rmdefault}{\bfdefault}{\updefault}{\color[rgb]{1,1,1}$b_2$}%
}}}}
\put(454,-3551){\makebox(0,0)[lb]{\smash{{\SetFigFont{14}{16.8}{\rmdefault}{\bfdefault}{\updefault}{\color[rgb]{1,1,1}$a_6$}%
}}}}
\put(664,-3191){\makebox(0,0)[lb]{\smash{{\SetFigFont{14}{16.8}{\rmdefault}{\bfdefault}{\updefault}{\color[rgb]{1,1,1}$a_5$}%
}}}}
\put(874,-2851){\makebox(0,0)[lb]{\smash{{\SetFigFont{14}{16.8}{\rmdefault}{\bfdefault}{\updefault}{\color[rgb]{1,1,1}$a_4$}%
}}}}
\put(1694,-1431){\makebox(0,0)[lb]{\smash{{\SetFigFont{14}{16.8}{\rmdefault}{\bfdefault}{\updefault}{\color[rgb]{1,1,1}$a_2$}%
}}}}
\put(1914,-1081){\makebox(0,0)[lb]{\smash{{\SetFigFont{14}{16.8}{\rmdefault}{\bfdefault}{\updefault}{\color[rgb]{1,1,1}$a_1$}%
}}}}
\put(6639,-3571){\makebox(0,0)[lb]{\smash{{\SetFigFont{14}{16.8}{\rmdefault}{\bfdefault}{\updefault}{\color[rgb]{1,1,1}$b_4$}%
}}}}
\put(6404,-3241){\makebox(0,0)[lb]{\smash{{\SetFigFont{14}{16.8}{\rmdefault}{\bfdefault}{\updefault}{\color[rgb]{1,1,1}$b_3$}%
}}}}
\put(5819,-2151){\makebox(0,0)[lb]{\smash{{\SetFigFont{14}{16.8}{\rmdefault}{\bfdefault}{\updefault}{\color[rgb]{1,1,1}$b_2$}%
}}}}
\put(447,-14899){\makebox(0,0)[lb]{\smash{{\SetFigFont{14}{16.8}{\rmdefault}{\bfdefault}{\updefault}{\color[rgb]{1,1,1}$a_6$}%
}}}}
\put(867,-14199){\makebox(0,0)[lb]{\smash{{\SetFigFont{14}{16.8}{\rmdefault}{\bfdefault}{\updefault}{\color[rgb]{1,1,1}$a_4$}%
}}}}
\put(8262,-9229){\makebox(0,0)[lb]{\smash{{\SetFigFont{14}{16.8}{\rmdefault}{\bfdefault}{\updefault}{\color[rgb]{1,1,1}$a_6$}%
}}}}
\put(8472,-8869){\makebox(0,0)[lb]{\smash{{\SetFigFont{14}{16.8}{\rmdefault}{\bfdefault}{\updefault}{\color[rgb]{1,1,1}$a_5$}%
}}}}
\put(8682,-8529){\makebox(0,0)[lb]{\smash{{\SetFigFont{14}{16.8}{\rmdefault}{\bfdefault}{\updefault}{\color[rgb]{1,1,1}$a_4$}%
}}}}
\put(9502,-7109){\makebox(0,0)[lb]{\smash{{\SetFigFont{14}{16.8}{\rmdefault}{\bfdefault}{\updefault}{\color[rgb]{1,1,1}$a_2$}%
}}}}
\put(9722,-6759){\makebox(0,0)[lb]{\smash{{\SetFigFont{14}{16.8}{\rmdefault}{\bfdefault}{\updefault}{\color[rgb]{1,1,1}$a_1$}%
}}}}
\put(14447,-9249){\makebox(0,0)[lb]{\smash{{\SetFigFont{14}{16.8}{\rmdefault}{\bfdefault}{\updefault}{\color[rgb]{1,1,1}$b_4$}%
}}}}
\put(14212,-8919){\makebox(0,0)[lb]{\smash{{\SetFigFont{14}{16.8}{\rmdefault}{\bfdefault}{\updefault}{\color[rgb]{1,1,1}$b_3$}%
}}}}
\put(13627,-7829){\makebox(0,0)[lb]{\smash{{\SetFigFont{14}{16.8}{\rmdefault}{\bfdefault}{\updefault}{\color[rgb]{1,1,1}$b_2$}%
}}}}
\put(1687,-12779){\makebox(0,0)[lb]{\smash{{\SetFigFont{14}{16.8}{\rmdefault}{\bfdefault}{\updefault}{\color[rgb]{1,1,1}$a_2$}%
}}}}
\put(1907,-12429){\makebox(0,0)[lb]{\smash{{\SetFigFont{14}{16.8}{\rmdefault}{\bfdefault}{\updefault}{\color[rgb]{1,1,1}$a_1$}%
}}}}
\put(6632,-14919){\makebox(0,0)[lb]{\smash{{\SetFigFont{14}{16.8}{\rmdefault}{\bfdefault}{\updefault}{\color[rgb]{1,1,1}$b_4$}%
}}}}
\put(6397,-14589){\makebox(0,0)[lb]{\smash{{\SetFigFont{14}{16.8}{\rmdefault}{\bfdefault}{\updefault}{\color[rgb]{1,1,1}$b_3$}%
}}}}
\put(447,-9244){\makebox(0,0)[lb]{\smash{{\SetFigFont{14}{16.8}{\rmdefault}{\bfdefault}{\updefault}{\color[rgb]{1,1,1}$a_6$}%
}}}}
\put(657,-8884){\makebox(0,0)[lb]{\smash{{\SetFigFont{14}{16.8}{\rmdefault}{\bfdefault}{\updefault}{\color[rgb]{1,1,1}$a_5$}%
}}}}
\put(867,-8544){\makebox(0,0)[lb]{\smash{{\SetFigFont{14}{16.8}{\rmdefault}{\bfdefault}{\updefault}{\color[rgb]{1,1,1}$a_4$}%
}}}}
\put(1687,-7124){\makebox(0,0)[lb]{\smash{{\SetFigFont{14}{16.8}{\rmdefault}{\bfdefault}{\updefault}{\color[rgb]{1,1,1}$a_2$}%
}}}}
\put(1907,-6774){\makebox(0,0)[lb]{\smash{{\SetFigFont{14}{16.8}{\rmdefault}{\bfdefault}{\updefault}{\color[rgb]{1,1,1}$a_1$}%
}}}}
\put(6632,-9264){\makebox(0,0)[lb]{\smash{{\SetFigFont{14}{16.8}{\rmdefault}{\bfdefault}{\updefault}{\color[rgb]{1,1,1}$b_4$}%
}}}}
\put(6397,-8934){\makebox(0,0)[lb]{\smash{{\SetFigFont{14}{16.8}{\rmdefault}{\bfdefault}{\updefault}{\color[rgb]{1,1,1}$b_3$}%
}}}}
\put(5812,-7844){\makebox(0,0)[lb]{\smash{{\SetFigFont{14}{16.8}{\rmdefault}{\bfdefault}{\updefault}{\color[rgb]{1,1,1}$b_2$}%
}}}}
\put(5812,-13499){\makebox(0,0)[lb]{\smash{{\SetFigFont{14}{16.8}{\rmdefault}{\bfdefault}{\updefault}{\color[rgb]{1,1,1}$b_2$}%
}}}}
\put(8262,-14884){\makebox(0,0)[lb]{\smash{{\SetFigFont{14}{16.8}{\rmdefault}{\bfdefault}{\updefault}{\color[rgb]{1,1,1}$a_6$}%
}}}}
\put(8472,-14524){\makebox(0,0)[lb]{\smash{{\SetFigFont{14}{16.8}{\rmdefault}{\bfdefault}{\updefault}{\color[rgb]{1,1,1}$a_5$}%
}}}}
\put(8682,-14184){\makebox(0,0)[lb]{\smash{{\SetFigFont{14}{16.8}{\rmdefault}{\bfdefault}{\updefault}{\color[rgb]{1,1,1}$a_4$}%
}}}}
\put(9502,-12764){\makebox(0,0)[lb]{\smash{{\SetFigFont{14}{16.8}{\rmdefault}{\bfdefault}{\updefault}{\color[rgb]{1,1,1}$a_2$}%
}}}}
\put(9722,-12414){\makebox(0,0)[lb]{\smash{{\SetFigFont{14}{16.8}{\rmdefault}{\bfdefault}{\updefault}{\color[rgb]{1,1,1}$a_1$}%
}}}}
\put(14447,-14904){\makebox(0,0)[lb]{\smash{{\SetFigFont{14}{16.8}{\rmdefault}{\bfdefault}{\updefault}{\color[rgb]{1,1,1}$b_4$}%
}}}}
\put(14212,-14574){\makebox(0,0)[lb]{\smash{{\SetFigFont{14}{16.8}{\rmdefault}{\bfdefault}{\updefault}{\color[rgb]{1,1,1}$b_3$}%
}}}}
\put(8269,-3536){\makebox(0,0)[lb]{\smash{{\SetFigFont{14}{16.8}{\rmdefault}{\bfdefault}{\updefault}{\color[rgb]{1,1,1}$a_6$}%
}}}}
\put(8479,-3176){\makebox(0,0)[lb]{\smash{{\SetFigFont{14}{16.8}{\rmdefault}{\bfdefault}{\updefault}{\color[rgb]{1,1,1}$a_5$}%
}}}}
\put(8689,-2836){\makebox(0,0)[lb]{\smash{{\SetFigFont{14}{16.8}{\rmdefault}{\bfdefault}{\updefault}{\color[rgb]{1,1,1}$a_4$}%
}}}}
\put(9509,-1416){\makebox(0,0)[lb]{\smash{{\SetFigFont{14}{16.8}{\rmdefault}{\bfdefault}{\updefault}{\color[rgb]{1,1,1}$a_2$}%
}}}}
\put(9729,-1066){\makebox(0,0)[lb]{\smash{{\SetFigFont{14}{16.8}{\rmdefault}{\bfdefault}{\updefault}{\color[rgb]{1,1,1}$a_1$}%
}}}}
\put(14454,-3556){\makebox(0,0)[lb]{\smash{{\SetFigFont{14}{16.8}{\rmdefault}{\bfdefault}{\updefault}{\color[rgb]{1,1,1}$b_4$}%
}}}}
\put(14219,-3226){\makebox(0,0)[lb]{\smash{{\SetFigFont{14}{16.8}{\rmdefault}{\bfdefault}{\updefault}{\color[rgb]{1,1,1}$b_3$}%
}}}}
\end{picture}}
\caption{Obtaining a recurrence for tiling generating functions of  the semi-hexagons with dents on two sides.}\label{Fig:Semitwodent4}
\end{figure}

\begin{proof}[Proof of Theorem \ref{semithm1}]
We rewrite identity (\ref{maineq1}) as
\begin{equation}\label{semieq1b}
\M(S_{x}((a_i)_{i=1}^{m}; (b_j)_{j=1}^{n}))=f_{x,y}((a_i)_{i=1}^{m}; (b_j)_{j=1}^{n})\cdot \M(S_{y}((a_i)_{i=1}^{m}; (b_j)_{j=1}^{n})),
\end{equation}
where $f_{x,y}((a_i)_{i=1}^{m}; (b_j)_{j=1}^{n})$ denotes the expression on the right-hand side of (\ref{maineq1}), i.e.,
\begin{align}
f_{x,y}((a_i)_{i=1}^{m}; (b_j)_{j=1}^{n})=&q^{(y-x)\left(\sum_{i=1}^{m}a_i+\sum_{j=1}^{n}b_j-\frac{(m+n)(m+n+1)}{2}\right)}\frac{\PP_{q^2}(y,m,n)}{\PP_{q^2}(x,m,n)}\notag\\
&\times\prod_{i=1}^{m}\frac{(q^{2(x+i)};q^2)_{a_i-i}}{(q^{2(y+i)};q^2)_{a_i-i}}\prod_{j=1}^{n}\frac{(q^{2(x+j)};q^2)_{b_j-j}}{(q^{2(y+j)};q^2)_{b_j-j}}.
\end{align}

We prove (\ref{semieq1b}) by induction on the statistic $p:=m+2n+t$, where $t=\sum_{i=1}^{n}((m+i)-b_{i})$. The parameter $t$ roughly measures how close the $b$-dents to the base of the semi-hexagon. When $t=0$, all $b$-dents are clustering to the lower-right corner of the region (see Figure \ref{Fig:Semitwodent12}(a)). The base cases are the situations when at least one of the perimeters $m,n,t$ is equal to $0$.

If $m=0$, then our two semi-hexagons $S_{x}(\textbf{a},\textbf{b})$ and  $S_{y}(\textbf{a},\textbf{b})$ have exactly 1 tiling as shown in Figure \ref{Fig:Semitwodent11}(a). In this case, identity (\ref{semieq1b}) becomes ``1=1." The case $n=0$ is similar (illustrated in Figure \ref{Fig:Semitwodent11}(b)). 

If $t=0$, then all $b$-dents of $S_{x}((a_i)_{i=1}^{m}; (b_j)_{j=1}^{n})$ are clustering to the lower-right corner. By removing forced lozenges, we get a pentagonal region with dents on the left side (illustrated by the region restricted by the bold contour in Figure \ref{Fig:Semitwodent12}(a)). The resulting region has the same tiling generating function as the region in Figure \ref{Fig:Semitwodent12}(b). (The two regions differ by several forced lozenges with weight 1.) We now $120^{\circ}$-rotate this region to get a weighted semi-hexagon in Lemma \ref{semilem1} (see Figure \ref{Fig:Semitwodent12}(c)).  (The dotted lines indicate the lines containing the $j$-axes of the regions. The lozenges intersecting to these dotted lines  are weighted by $\frac{X+Y}{2}$.) This way, we obtain an explicit formula for the tiling generating function of the semi-hexagon $S_{x}((a_i)_{i=1}^{m}; (b_j)_{j=1}^{n})$ when $t=0$. Working similarly, we get a formula for the tiling generating function of $S_{y}((a_i)_{i=1}^{m}; (b_j)_{j=1}^{n})$. Identity (\ref{semieq1b}) follows directly from Lemma \ref{semilem1} in this case.

\medskip

For the induction step, we assume that $m,n,t$ are all positive and that (\ref{semieq1b}) holds for any pair of  semi-hexagons whose $p$-statistic is strictly less than $m+2n+t$. 

By the tile-ability of the semi-hexagons in Lemma \ref{tileability1}, at least one of $a_1$ and $b_1$ is strictly greater than 1. Without loss of generality, we assume that $a_1>1$. If $b_1=1$, then one can remove forced lozenges on the top of the two semi-hexagons to get two ``smaller"\footnote{In the rest of this proof, we say that the semi-hexagon $A$ is \emph{smaller} than the semi-hexagon $B$, if the $p$-statistic of $A$ is less than that of $B$.} semi-hexagons of the same type. Then  (\ref{semieq1b}) follows from the induction hypothesis. Therefore, we can also assume that $b_1>1$.

When $a_1,b_1>1$, we will show that the expressions on both sides of (\ref{semieq1b}) satisfy the same recurrence. Then the theorem follows from the induction principle. 

To obtain the recurrence for the tiling generating function of $S=S_{x}((a_i)_{i=1}^{m}; (b_j)_{j=1}^{n})$ on the left-hand side of (\ref{semieq1b}), we use Kuo's condensation in  Lemma \ref{kuothm2}. Assume that $l$ is the largest index such that there is no $a$-dent at the position $a_l-1$ on the left side of $S$.   We consider the region $R$ obtained by filling the dents at the positions of $a_l$ and $b_1$ in $S$ by two unit triangles  (see Figure \ref{Fig:Semitwodent3} for an example when $l=3$). $R$ now has two more up-pointing unit triangles than down-pointing unit triangles. We apply Kuo condensation in Lemma \ref{kuothm2} to the dual graph $G$ of  $R$ with the four vertices $u,v,w,s$ corresponding to the shaded up-pointing unit triangles of the same label. More precisely, the $u$-triangle is the up-pointing triangle on the upper-left corner of $S$, the $v$-triangle is at the position $b_1$, the $w$-triangle is at the last non-dent position on the right side of $S$, and the $s$-triangle is at the position $a_l$. Let $\alpha$ denote the position of the $w$-triangle. We get a recurrence:
\begin{equation}\label{kuoeq2b}
\MM(G-\{u,w\})\MM(G-\{v,s\})=\MM(G-\{u,v\})\MM(G-\{w,s\})+\MM(G-\{u,s\})\MM(G-\{v,w\}).
\end{equation}
 We plan to convert each matching generating function in the recurrence into the tiling generating function of a semi-hexagon. 
 
For brevity,  we use the notations $\textbf{a}-c$  and $\textbf{a}+d$ for the sequences obtained from $\textbf{a}=(a_i)_{i=1}^{m}$ by excluding the term $c$ and by including the term $d$ (and rearranging in increasing order), respectively. We also use the notation $\textbf{a}^{*}$ for the sequence obtained from the sequence $\textbf{a}$ by subtracting 1 from each of its term. 

First, we consider the region corresponding to graph $G-\{u,w\}$ (as shown in Figure \ref{Fig:Semitwodent4}(a)). The removal of the $u$- and $w$-triangles yields forced lozenges (with weight $1$) on the top of the region. Removal of these forced lozenges gives a new semi-hexagon, namely $S_{x+1}((\textbf{a}-a_l)^{*}; (\textbf{b}-b_1+\alpha)^{*})$. In this semi-hexagon, the sequence $(\textbf{a}-a_l)^{*}$ is obtained from the sequence $\textbf{a}$ by excluding the term $a_l$ and then subtracting $1$ from each term of the resulting sequence, similarly, the sequence $(\textbf{b}-b_1+\alpha)^{*}$ is obtained from the sequence $\textbf{b}$ by excluding the term $b_1$, including the term $\alpha$, and then subtracting $1$ from each term of the resulting sequence. We get 
\begin{equation}
\MM(G-\{u,w\})=\M(S_{x+1}((\textbf{a}-a_l)^{*}; (\textbf{b}-b_1+\alpha)^{*})).
\end{equation}

Working similarly for the regions corresponding to the other five graphs in recurrence (\ref{kuoeq2b}), based on Figures \ref{Fig:Semitwodent4}(b)--(f), we get 
\begin{equation}
\MM(G-\{v,s\})=\M(S_{x}(\textbf{a}; \textbf{b})),
\end{equation}
\begin{equation}
\MM(G-\{u,v\})=\M(S_{x+1}((\textbf{a}-a_l)^{*}; \textbf{b}^{*})),
\end{equation}
\begin{equation}
\MM(G-\{w,s\})=\M(S_{x}(\textbf{a}; \textbf{b}-b_1+\alpha)),
\end{equation}
\begin{equation}
\MM(G-\{u,s\})=\M(S_{x+1}(\textbf{a}^{*}; (\textbf{b}-b_1)^{*})),
\end{equation}
\begin{equation}
\MM(G-\{v,s\})=\M(S_{x}(\textbf{a}-a_l; \textbf{b}+\alpha)).
\end{equation}
These six equations transform recurrence (\ref{kuoeq2b}) into a the recurrence for the tiling generating functions of the semi-hexagons:
\begin{align}\label{recur1}
\M(S_{x+1}((\textbf{a}-a_l)^{*}; (\textbf{b}-b_1+\alpha)^{*})\M(S_{x}(\textbf{a}; \textbf{b}))&= \M(S_{x+1}((\textbf{a}-a_l)^{*}; \textbf{b}^{*}))\M(S_{x}(\textbf{a}; \textbf{b}-b_1+\alpha))\notag\\
&+\M(S_{x+1}(\textbf{a}^{*}; (\textbf{b}-b_1)^{*}))\M(S_{x}(\textbf{a}-a_l; \textbf{b}+\alpha)).
\end{align}

We note that if $S=S_{x}(\textbf{a}; \textbf{b})$ is tile-able, then all other five semi-hexagons in the above recurrence are also tile-able (by Lemma \ref{tileability1}).  Moreover, one could verify that the $p$-statistics of these five semi-hexagons are all strictly less than $m+2n+t$.

\medskip

To complete the prove, we want to show that the expression $f_{x,y}(\textbf{a};\textbf{b}) \cdot \M(S_{y}(\textbf{a};\textbf{b}))$ on the right-hand side of (\ref{semieq1b}) also satisfies the same recurrence. Equivalently, we need to verify that
\begin{align}
A\cdot \M(S_{y+1}((\textbf{a}-a_l)^{*}; (\textbf{b}-b_1+\alpha)^{*})\M(S_{y}(\textbf{a}; \textbf{b}))&= B\cdot \M(S_{y+1}((\textbf{a}-a_l)^{*}; \textbf{b}^{*}))\M(S_{y}(\textbf{a}; \textbf{b}-b_1+\alpha))\notag\\
&+C\cdot \M(S_{y+1}(\textbf{a}^{*}; (\textbf{b}-b_1)^{*}))\M(S_{y}(\textbf{a}-a_l; \textbf{b}+\alpha)),
\end{align}
where 
\[A=f_{x+1,y+1}((\textbf{a}-a_l)^{*}; (\textbf{b}-b_1+\alpha)^{*})\cdot f_{x,y}(\textbf{a}; \textbf{b})\]
\[B=f_{x+1,y+1}((\textbf{a}-a_l)^{*}; \textbf{b}^{*})\cdot f_{x,y}(\textbf{a}; \textbf{b}-b_1+\alpha)\]
\[C=f_{x+1,y+1}(\textbf{a}^{*}; (\textbf{b}-b_1)^{*})\cdot f_{x,y}(\textbf{a}-a_l; \textbf{b}+\alpha).\]
It is routine to verify that $ A=B=C$. This means that we now only need to verify that 


\begin{align}
 \M(S_{y+1}((\textbf{a}-a_l)^{*}; (\textbf{b}-b_1+\alpha)^{*})\M(S_{y}(\textbf{a}; \textbf{b}))&= \M(S_{y+1}((\textbf{a}-a_l)^{*}; \textbf{b}^{*}))\M(S_{y}(\textbf{a}; \textbf{b}-b_1+\alpha))\notag\\
&+ \M(S_{y+1}(\textbf{a}^{*}; (\textbf{b}-b_1)^{*}))\M(S_{y}(\textbf{a}-a_l; \textbf{b}+\alpha)).
\end{align}
 However, this recurrence follows directly from recurrence (\ref{recur1}) by simply replacing $x$ by $y$. This finishes our proof.
\end{proof}

 One could prove Theorem \ref{semithm2} in the same way as Theorem \ref{semithm1}, using Lemma \ref{semilem2}. We leave this proof as an exercise to the reader.

\begin{figure}\centering
\setlength{\unitlength}{3947sp}%
\begingroup\makeatletter\ifx\SetFigFont\undefined%
\gdef\SetFigFont#1#2#3#4#5{%
  \reset@font\fontsize{#1}{#2pt}%
  \fontfamily{#3}\fontseries{#4}\fontshape{#5}%
  \selectfont}%
\fi\endgroup%
\resizebox{!}{20cm}{
\begin{picture}(0,0)%
\includegraphics{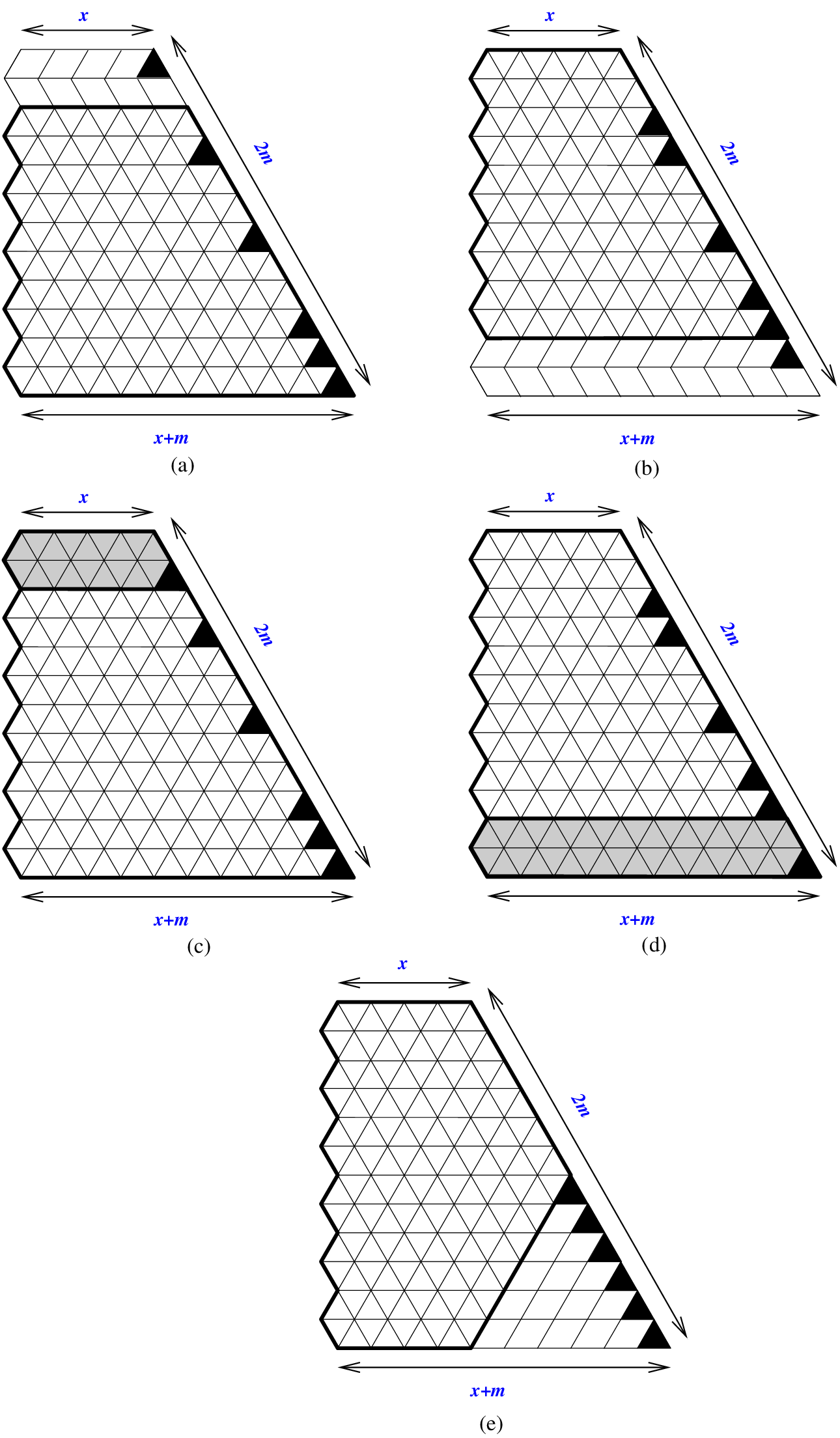}%
\end{picture}%
%
%

\begin{picture}(10303,17652)(968,-17385)
\put(10301,-9702){\makebox(0,0)[lb]{\smash{{\SetFigFont{14}{16.8}{\rmdefault}{\bfdefault}{\updefault}{\color[rgb]{1,1,1}$a_4$}%
}}}}
\put(8286,-15129){\makebox(0,0)[lb]{\smash{{\SetFigFont{14}{16.8}{\rmdefault}{\bfdefault}{\updefault}{\color[rgb]{1,1,1}$a_3$}%
}}}}
\put(8092,-14767){\makebox(0,0)[lb]{\smash{{\SetFigFont{14}{16.8}{\rmdefault}{\bfdefault}{\updefault}{\color[rgb]{1,1,1}$a_2$}%
}}}}
\put(2743,-570){\makebox(0,0)[lb]{\smash{{\SetFigFont{14}{16.8}{\rmdefault}{\bfdefault}{\updefault}{\color[rgb]{1,1,1}$a_1$}%
}}}}
\put(3375,-1654){\makebox(0,0)[lb]{\smash{{\SetFigFont{14}{16.8}{\rmdefault}{\bfdefault}{\updefault}{\color[rgb]{1,1,1}$a_2$}%
}}}}
\put(3975,-2719){\makebox(0,0)[lb]{\smash{{\SetFigFont{14}{16.8}{\rmdefault}{\bfdefault}{\updefault}{\color[rgb]{1,1,1}$a_3$}%
}}}}
\put(4590,-3769){\makebox(0,0)[lb]{\smash{{\SetFigFont{14}{16.8}{\rmdefault}{\bfdefault}{\updefault}{\color[rgb]{1,1,1}$a_4$}%
}}}}
\put(4800,-4121){\makebox(0,0)[lb]{\smash{{\SetFigFont{14}{16.8}{\rmdefault}{\bfdefault}{\updefault}{\color[rgb]{1,1,1}$a_5$}%
}}}}
\put(5010,-4466){\makebox(0,0)[lb]{\smash{{\SetFigFont{14}{16.8}{\rmdefault}{\bfdefault}{\updefault}{\color[rgb]{1,1,1}$a_6$}%
}}}}
\put(7877,-14413){\makebox(0,0)[lb]{\smash{{\SetFigFont{14}{16.8}{\rmdefault}{\bfdefault}{\updefault}{\color[rgb]{1,1,1}$a_1$}%
}}}}
\put(8482,-15470){\makebox(0,0)[lb]{\smash{{\SetFigFont{14}{16.8}{\rmdefault}{\bfdefault}{\updefault}{\color[rgb]{1,1,1}$a_4$}%
}}}}
\put(8692,-15822){\makebox(0,0)[lb]{\smash{{\SetFigFont{14}{16.8}{\rmdefault}{\bfdefault}{\updefault}{\color[rgb]{1,1,1}$a_5$}%
}}}}
\put(8901,-1281){\makebox(0,0)[lb]{\smash{{\SetFigFont{14}{16.8}{\rmdefault}{\bfdefault}{\updefault}{\color[rgb]{1,1,1}$a_1$}%
}}}}
\put(2954,-6862){\makebox(0,0)[lb]{\smash{{\SetFigFont{14}{16.8}{\rmdefault}{\bfdefault}{\updefault}{\color[rgb]{1,1,1}$a_1$}%
}}}}
\put(3374,-7575){\makebox(0,0)[lb]{\smash{{\SetFigFont{14}{16.8}{\rmdefault}{\bfdefault}{\updefault}{\color[rgb]{1,1,1}$a_2$}%
}}}}
\put(3974,-8640){\makebox(0,0)[lb]{\smash{{\SetFigFont{14}{16.8}{\rmdefault}{\bfdefault}{\updefault}{\color[rgb]{1,1,1}$a_3$}%
}}}}
\put(4589,-9690){\makebox(0,0)[lb]{\smash{{\SetFigFont{14}{16.8}{\rmdefault}{\bfdefault}{\updefault}{\color[rgb]{1,1,1}$a_4$}%
}}}}
\put(4799,-10042){\makebox(0,0)[lb]{\smash{{\SetFigFont{14}{16.8}{\rmdefault}{\bfdefault}{\updefault}{\color[rgb]{1,1,1}$a_5$}%
}}}}
\put(5009,-10387){\makebox(0,0)[lb]{\smash{{\SetFigFont{14}{16.8}{\rmdefault}{\bfdefault}{\updefault}{\color[rgb]{1,1,1}$a_6$}%
}}}}
\put(9102,-1659){\makebox(0,0)[lb]{\smash{{\SetFigFont{14}{16.8}{\rmdefault}{\bfdefault}{\updefault}{\color[rgb]{1,1,1}$a_2$}%
}}}}
\put(9702,-2724){\makebox(0,0)[lb]{\smash{{\SetFigFont{14}{16.8}{\rmdefault}{\bfdefault}{\updefault}{\color[rgb]{1,1,1}$a_3$}%
}}}}
\put(10120,-3416){\makebox(0,0)[lb]{\smash{{\SetFigFont{14}{16.8}{\rmdefault}{\bfdefault}{\updefault}{\color[rgb]{1,1,1}$a_4$}%
}}}}
\put(10330,-3768){\makebox(0,0)[lb]{\smash{{\SetFigFont{14}{16.8}{\rmdefault}{\bfdefault}{\updefault}{\color[rgb]{1,1,1}$a_5$}%
}}}}
\put(10537,-4127){\makebox(0,0)[lb]{\smash{{\SetFigFont{14}{16.8}{\rmdefault}{\bfdefault}{\updefault}{\color[rgb]{1,1,1}$a_6$}%
}}}}
\put(8881,-7201){\makebox(0,0)[lb]{\smash{{\SetFigFont{14}{16.8}{\rmdefault}{\bfdefault}{\updefault}{\color[rgb]{1,1,1}$a_1$}%
}}}}
\put(9103,-7565){\makebox(0,0)[lb]{\smash{{\SetFigFont{14}{16.8}{\rmdefault}{\bfdefault}{\updefault}{\color[rgb]{1,1,1}$a_2$}%
}}}}
\put(9703,-8630){\makebox(0,0)[lb]{\smash{{\SetFigFont{14}{16.8}{\rmdefault}{\bfdefault}{\updefault}{\color[rgb]{1,1,1}$a_3$}%
}}}}
\put(10738,-10377){\makebox(0,0)[lb]{\smash{{\SetFigFont{14}{16.8}{\rmdefault}{\bfdefault}{\updefault}{\color[rgb]{1,1,1}$a_6$}%
}}}}
\put(10081,-9352){\makebox(0,0)[lb]{\smash{{\SetFigFont{14}{16.8}{\rmdefault}{\bfdefault}{\updefault}{\color[rgb]{1,1,1}$a_4$}%
}}}}
\put(8902,-16167){\makebox(0,0)[lb]{\smash{{\SetFigFont{14}{16.8}{\rmdefault}{\bfdefault}{\updefault}{\color[rgb]{1,1,1}$a_6$}%
}}}}
\end{picture}}
\caption{Several special cases in the proof of Theorem \ref{halfthm1}: (a) the case $a_1=1$, (b) the case $t=0$, (c) the case $a_1=2$, (d) the case $t=1$ (i.e., $a_{m}=2m$ and $a_{m-1}\leq 2m-2$), and  (e) the case $t=m$.}\label{Fig:Semitwodent7}
\end{figure}

\begin{figure}\centering
\setlength{\unitlength}{3947sp}%
\begingroup\makeatletter\ifx\SetFigFont\undefined%
\gdef\SetFigFont#1#2#3#4#5{%
  \reset@font\fontsize{#1}{#2pt}%
  \fontfamily{#3}\fontseries{#4}\fontshape{#5}%
  \selectfont}%
\fi\endgroup%
\resizebox{!}{7cm}{
\begin{picture}(0,0)%
\includegraphics{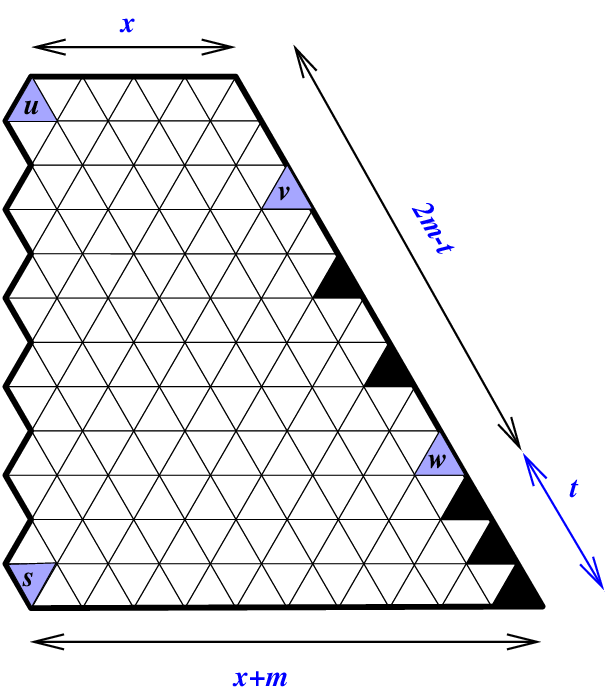}%
\end{picture}%
%
%

\begin{picture}(4855,5493)(6104,-4861)
\put(9721,-3411){\makebox(0,0)[lb]{\smash{{\SetFigFont{14}{16.8}{\rmdefault}{\bfdefault}{\updefault}{\color[rgb]{1,1,1}$a_4$}%
}}}}
\put(8604,-864){\makebox(0,0)[lb]{\smash{{\SetFigFont{14}{16.8}{\rmdefault}{\bfdefault}{\updefault}{\color[rgb]{0,0,0}$a_1$}%
}}}}
\put(9091,-2391){\makebox(0,0)[lb]{\smash{{\SetFigFont{14}{16.8}{\rmdefault}{\bfdefault}{\updefault}{\color[rgb]{1,1,1}$a_3$}%
}}}}
\put(10141,-4108){\makebox(0,0)[lb]{\smash{{\SetFigFont{14}{16.8}{\rmdefault}{\bfdefault}{\updefault}{\color[rgb]{1,1,1}$a_6$}%
}}}}
\put(9931,-3763){\makebox(0,0)[lb]{\smash{{\SetFigFont{14}{16.8}{\rmdefault}{\bfdefault}{\updefault}{\color[rgb]{1,1,1}$a_5$}%
}}}}
\put(8691,-1661){\makebox(0,0)[lb]{\smash{{\SetFigFont{14}{16.8}{\rmdefault}{\bfdefault}{\updefault}{\color[rgb]{1,1,1}$a_2$}%
}}}}
\end{picture}}
\caption{How to apply Kuo condensation to the quartered hexagon.}\label{Fig:Semitwodent8}
\end{figure}

\begin{figure}\centering
\setlength{\unitlength}{3947sp}%
\begingroup\makeatletter\ifx\SetFigFont\undefined%
\gdef\SetFigFont#1#2#3#4#5{%
  \reset@font\fontsize{#1}{#2pt}%
  \fontfamily{#3}\fontseries{#4}\fontshape{#5}%
  \selectfont}%
\fi\endgroup%
\resizebox{!}{22cm}{
\begin{picture}(0,0)%
\includegraphics{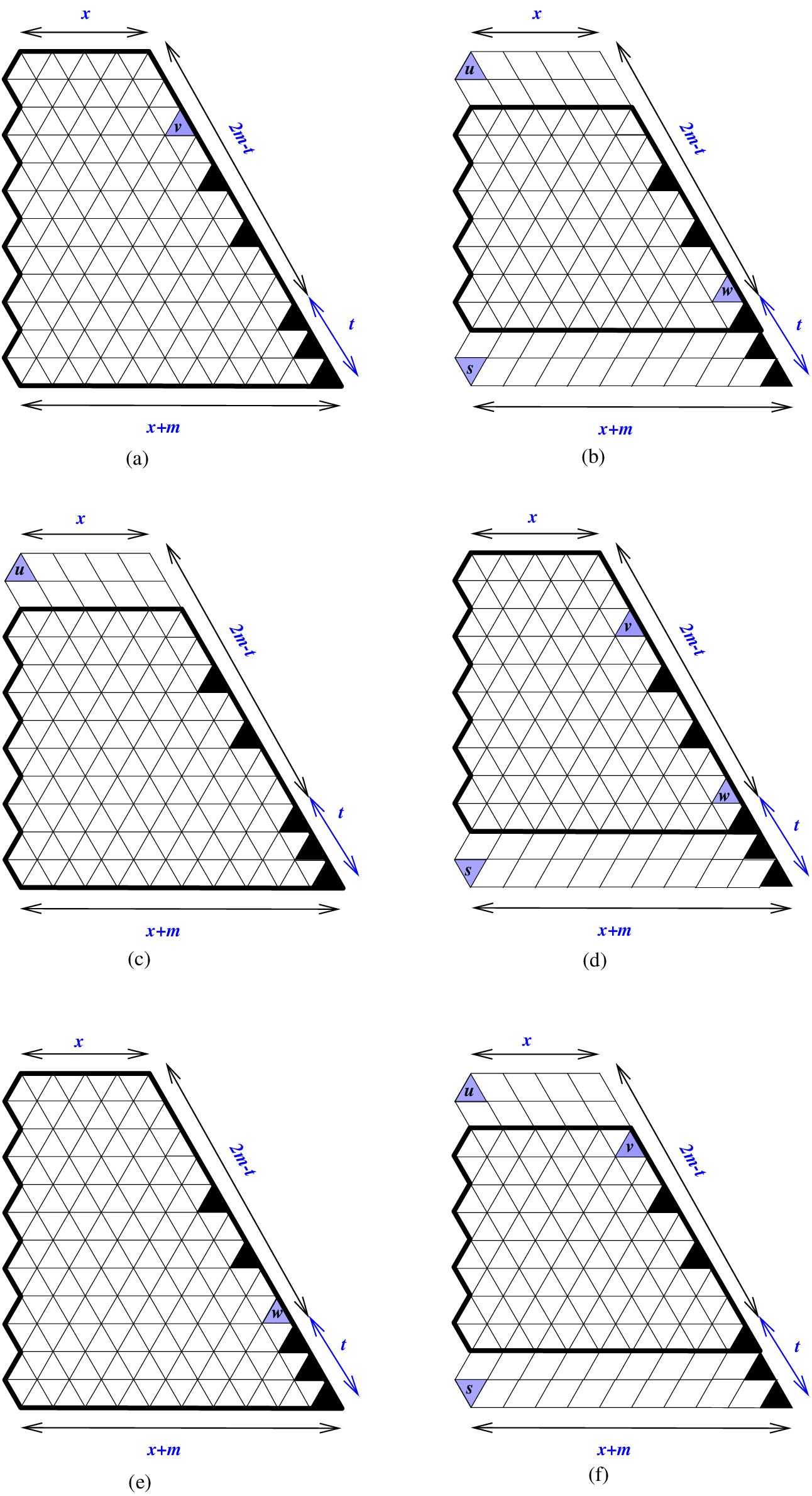}%
\end{picture}%
%
%

\begin{picture}(10323,19031)(964,-18476)
\put(9696,-8857){\makebox(0,0)[lb]{\smash{{\SetFigFont{14}{16.8}{\rmdefault}{\bfdefault}{\updefault}{\color[rgb]{1,1,1}$a_3$}%
}}}}
\put(9696,-15514){\makebox(0,0)[lb]{\smash{{\SetFigFont{14}{16.8}{\rmdefault}{\bfdefault}{\updefault}{\color[rgb]{1,1,1}$a_3$}%
}}}}
\put(9696,-2515){\makebox(0,0)[lb]{\smash{{\SetFigFont{14}{16.8}{\rmdefault}{\bfdefault}{\updefault}{\color[rgb]{1,1,1}$a_3$}%
}}}}
\put(3956,-8894){\makebox(0,0)[lb]{\smash{{\SetFigFont{14}{16.8}{\rmdefault}{\bfdefault}{\updefault}{\color[rgb]{1,1,1}$a_3$}%
}}}}
\put(3954,-2514){\makebox(0,0)[lb]{\smash{{\SetFigFont{14}{16.8}{\rmdefault}{\bfdefault}{\updefault}{\color[rgb]{1,1,1}$a_3$}%
}}}}
\put(5009,-4237){\makebox(0,0)[lb]{\smash{{\SetFigFont{14}{16.8}{\rmdefault}{\bfdefault}{\updefault}{\color[rgb]{1,1,1}$a_6$}%
}}}}
\put(4799,-3892){\makebox(0,0)[lb]{\smash{{\SetFigFont{14}{16.8}{\rmdefault}{\bfdefault}{\updefault}{\color[rgb]{1,1,1}$a_5$}%
}}}}
\put(4589,-3540){\makebox(0,0)[lb]{\smash{{\SetFigFont{14}{16.8}{\rmdefault}{\bfdefault}{\updefault}{\color[rgb]{1,1,1}$a_4$}%
}}}}
\put(10746,-4244){\makebox(0,0)[lb]{\smash{{\SetFigFont{14}{16.8}{\rmdefault}{\bfdefault}{\updefault}{\color[rgb]{1,1,1}$a_6$}%
}}}}
\put(10536,-3899){\makebox(0,0)[lb]{\smash{{\SetFigFont{14}{16.8}{\rmdefault}{\bfdefault}{\updefault}{\color[rgb]{1,1,1}$a_5$}%
}}}}
\put(10326,-3547){\makebox(0,0)[lb]{\smash{{\SetFigFont{14}{16.8}{\rmdefault}{\bfdefault}{\updefault}{\color[rgb]{1,1,1}$a_4$}%
}}}}
\put(5031,-10619){\makebox(0,0)[lb]{\smash{{\SetFigFont{14}{16.8}{\rmdefault}{\bfdefault}{\updefault}{\color[rgb]{1,1,1}$a_6$}%
}}}}
\put(4821,-10274){\makebox(0,0)[lb]{\smash{{\SetFigFont{14}{16.8}{\rmdefault}{\bfdefault}{\updefault}{\color[rgb]{1,1,1}$a_5$}%
}}}}
\put(4611,-9922){\makebox(0,0)[lb]{\smash{{\SetFigFont{14}{16.8}{\rmdefault}{\bfdefault}{\updefault}{\color[rgb]{1,1,1}$a_4$}%
}}}}
\put(10746,-10605){\makebox(0,0)[lb]{\smash{{\SetFigFont{14}{16.8}{\rmdefault}{\bfdefault}{\updefault}{\color[rgb]{1,1,1}$a_6$}%
}}}}
\put(3579,-1781){\makebox(0,0)[lb]{\smash{{\SetFigFont{14}{16.8}{\rmdefault}{\bfdefault}{\updefault}{\color[rgb]{1,1,1}$a_2$}%
}}}}
\put(10536,-10260){\makebox(0,0)[lb]{\smash{{\SetFigFont{14}{16.8}{\rmdefault}{\bfdefault}{\updefault}{\color[rgb]{1,1,1}$a_5$}%
}}}}
\put(9312,-1777){\makebox(0,0)[lb]{\smash{{\SetFigFont{14}{16.8}{\rmdefault}{\bfdefault}{\updefault}{\color[rgb]{1,1,1}$a_2$}%
}}}}
\put(10326,-9908){\makebox(0,0)[lb]{\smash{{\SetFigFont{14}{16.8}{\rmdefault}{\bfdefault}{\updefault}{\color[rgb]{1,1,1}$a_4$}%
}}}}
\put(3582,-8160){\makebox(0,0)[lb]{\smash{{\SetFigFont{14}{16.8}{\rmdefault}{\bfdefault}{\updefault}{\color[rgb]{1,1,1}$a_2$}%
}}}}
\put(10746,-17242){\makebox(0,0)[lb]{\smash{{\SetFigFont{14}{16.8}{\rmdefault}{\bfdefault}{\updefault}{\color[rgb]{1,1,1}$a_6$}%
}}}}
\put(9307,-8153){\makebox(0,0)[lb]{\smash{{\SetFigFont{14}{16.8}{\rmdefault}{\bfdefault}{\updefault}{\color[rgb]{1,1,1}$a_2$}%
}}}}
\put(10536,-16897){\makebox(0,0)[lb]{\smash{{\SetFigFont{14}{16.8}{\rmdefault}{\bfdefault}{\updefault}{\color[rgb]{1,1,1}$a_5$}%
}}}}
\put(3581,-14772){\makebox(0,0)[lb]{\smash{{\SetFigFont{14}{16.8}{\rmdefault}{\bfdefault}{\updefault}{\color[rgb]{1,1,1}$a_2$}%
}}}}
\put(10326,-16545){\makebox(0,0)[lb]{\smash{{\SetFigFont{14}{16.8}{\rmdefault}{\bfdefault}{\updefault}{\color[rgb]{1,1,1}$a_4$}%
}}}}
\put(9310,-14776){\makebox(0,0)[lb]{\smash{{\SetFigFont{14}{16.8}{\rmdefault}{\bfdefault}{\updefault}{\color[rgb]{1,1,1}$a_2$}%
}}}}
\put(5016,-17249){\makebox(0,0)[lb]{\smash{{\SetFigFont{14}{16.8}{\rmdefault}{\bfdefault}{\updefault}{\color[rgb]{1,1,1}$a_6$}%
}}}}
\put(4806,-16904){\makebox(0,0)[lb]{\smash{{\SetFigFont{14}{16.8}{\rmdefault}{\bfdefault}{\updefault}{\color[rgb]{1,1,1}$a_5$}%
}}}}
\put(4596,-16552){\makebox(0,0)[lb]{\smash{{\SetFigFont{14}{16.8}{\rmdefault}{\bfdefault}{\updefault}{\color[rgb]{1,1,1}$a_4$}%
}}}}
\put(3966,-15504){\makebox(0,0)[lb]{\smash{{\SetFigFont{14}{16.8}{\rmdefault}{\bfdefault}{\updefault}{\color[rgb]{1,1,1}$a_3$}%
}}}}
\end{picture}}
\caption{Obtaining a recurrence for the tiling generating functions of the quartered hexagons.}\label{Fig:Semitwodent9}
\end{figure}

We now can prove Theorem \ref{halfthm1}.

\begin{proof}[Proof of Theorem \ref{halfthm1}]
Let $t$ be the size of the maximal cluster of dents attaching to the lower-right corner of the region $Q_{x}=Q_{x}(\textbf{a})$, where $\textbf{a}=(a_i)_{i=1}^{m}$. The $t$-parameter varies from $0$ to $m$: $t=0$ if $a_m<2m$, and $t=m$ if $a_{i}=m+i$, for $i=1,2,\dots,m$. For example, the  quartered hexagon in Figure \ref{Fig:Semitwodent5}(a) has $t=3$.

We reformulate our identity (\ref{halfeq1}) as
\begin{align}\label{halfeq3}
\M(Q_{x}((a_i)_{i=1}^{m})&=q^{2(y-x)(\sum_{i=1}^{m}a_i- m^2)}\prod_{i=1}^{m}\frac{(q^{2(2y+a_i+1)};q^2)_{2i-a_i-1}}{(q^{2(2x+a_i+1)};q^2)_{2i-a_i-1}} \M(Q_{y}((a_i)_{i=1}^{m})
\end{align}
and denote
\begin{equation}
g_{x,y}((a_i)_{i=1}^{m})=q^{2(y-x)(\sum_{i=1}^{m}a_i- m^2)}\prod_{i=1}^{m}\frac{(q^{2(2y+a_i+1)};q^2)_{2i-a_i-1}}{(q^{2(2x+a_i+1)};q^2)_{2i-a_i-1}}.
\end{equation}
We plan to prove (\ref{halfeq3}) by induction on the statistic $p:=2m-t$. We note that $2m-t$ is always non-negative as $t\leq m$.

If $m=0$, then $Q_{x}(\emptyset)$ and $Q_{y}(\emptyset)$ become two degenerated regions. By convention, each has tiling generating function 1. Our identity simply becomes ``$1=1$."
If $m=1$, then  there are only two cases $\textbf{a}=(a_1)=(1)$ or $(2)$. If $a_1=1$, then both $Q_{x}((a_1))$ and $Q_{y}((a_1))$ has tiling generating function 1, and our identity is obviously true. If $a_1=2$, then  our region is exactly the halved hexagon $P_{x,1}$, and (\ref{halfeq3}) follows from Lemma \ref{proctorlem}.

If $t=m$, then  all of the dents are clustering to the lower-right corner of the region. Then our region, after removed forced lozenges, becomes a halved hexagon $P_{x,m}$ in Lemma \ref{proctorlem} (see Figure \ref{Fig:Semitwodent7}(e)). Again, (\ref{halfeq3}) follows from Lemma \ref{proctorlem}.

For the induction step, we assume that $m\geq 2$ and $t<m$, and that identity (\ref{halfeq3}) holds for any pair of quartered hexagons whose $p$-statistics are strictly less than $2m-t$.

First, we will show below that one could assume that  $a_1\geq 3$ and $t\geq 2$. Indeed, if $a_1=1$, then $a_2\geq 3$ by the tile-ability in Lemma \ref{tileability2}. Then we get forced lozenges along the first and second rows of unit triangles in $Q_x$ (see Figure \ref{Fig:Semitwodent7}(a)). After removing these forced lozenges (whose weights are all $1$), we get a `smaller'\footnote{Similar to the case of dented semi-hexagons, when we say a halved hexagon is``\emph{smaller}" than another halved hexagon if  its $p$-statistic is less than that of the latter one.} quartered hexagon with the same tiling generating function. We can do similarly to $Q_y=Q_{y}(\textbf{a})$, and (\ref{halfeq3})  follows from the induction hypothesis.

If $a_1=2$, then we can apply the Region-splitting Lemma (Lemma \ref{RS}) to split the region $Q_x$ into two smaller quartered hexagons as in Figure \ref{Fig:Semitwodent7}(c). (The cut is along the level 2 from the top of $Q_x$; the top portion is shaded.) Do similarly for $Q_y$, and (\ref{halfeq3}) follows from the Region-splitting Lemma and the induction hypothesis.

If $t=0$, then by the tile-ability in Lemma \ref{tileability2}, $a_m$ must be $2m-1$, then we have forced lozenges on the two bottom rows of $Q_x$ and $Q_y$ (illustrated in Figure  \ref{Fig:Semitwodent7}(b)). Removing these forced lozenges, we get a pair of smaller quartered hexagons. Again, (\ref{halfeq3}) follows  from  the induction hypothesis.

If $t=1$, then by definition of $t$, we have $s_{m}=2m$ and $s_{m-1}\leq2m-2$. By the Region-splitting Lemma, each of the regions $Q_x$ and $Q_y$ can be partitioned into two smaller quartered hexagons, as shown in Figure \ref{Fig:Semitwodent7}(d) (the lower portion is shaded). Then identity (\ref{halfeq3}) follows one more time from the induction hypothesis.

\medskip

In the rest of the proof, we are assuming besides that $a_1\geq 3$ and $t\geq 2$. We will use Kuo condensation in Lemma  \ref{kuothm1} to show that the expressions on both sides of (\ref{halfeq3}) satisfy the same recurrence. Then the theorem follows from  by the induction principle.

\medskip

We first work on the recurrence for the  tiling generating function of $Q_x$ on the left-hand side of (\ref{halfeq3}). We consider the dual graph $G$ of the region $R$ that is obtained from  $Q_x$ by filling the  first dent by an up-pointing unit triangle. The region $R$ now has one more up-pointing unit triangles than down-pointing triangles. We apply Kuo condensation in  Lemma \ref{kuothm1} to $G$ with the four vertices $u,v,w,s$  corresponding to the  shaded unit triangles  in Figure \ref{Fig:Semitwodent8}. In particular,  the $u$-triangle is the up-pointing triangle at the upper-left corner of $R$, the  $v$-triangle is the up-pointing triangle at the position $a_1$ (the previous position of the first dent in $Q_x$),  the $w$-triangle is at the last non-dent position on the right side of $R$, and the $s$-triangle is the down-pointing triangle at the lower-left corner of $R$. Let $\beta$ denote the the position of the $w$-triangle.

Figure \ref{Fig:Semitwodent9} tells us that the product of the tiling generating functions of the two regions in the top row is equal to the product of the tiling generating functions of the two regions in the middle row, plus the product of the tiling generating functions of the two regions in the bottom row. Working on the removal of forced lozenges (whose weights are all 1) as in shown the figure, we get the recurrence:
\begin{align}\label{halfeq4}
\M(Q_{x}(\textbf{a}))\M(Q_{x+1}(((a_{i})_{i=2}^{m-2}+\beta)^{**})&=\M(Q_{x+1}((a_{i})_{i=2}^{m})^{**})\M(Q_{x}((a_{i})_{i=1}^{m-2}+\beta))\notag\\
&+\M(Q_{x}((a_{i})_{i=2}^{m}+\beta))\M(Q_{x+1}(((a_{i})_{i=1}^{m-2})^{**}),
\end{align}
where we use the notation $\textbf{s}^{**}$ for the sequence obtained by subtracting $2$ from each term of the sequence $\textbf{s}$ (we still use the notation $\textbf{s}+\beta$ for the sequence obtained from including the term $\beta$ to $\textbf{s}$ and rearranging in increasing order).
It is easy to see that if $Q_{x}(\textbf{a})$ is tile-able, then the other five regions in the recurrence are also tile-able by Lemma \ref{tileability2}. Moreover, these five regions are all strictly smaller than $Q_x=Q_{x}(\textbf{a})$.

To finish the proof, we need to show that the expression on the right-hand side of (\ref{halfeq3}), i.e.,
\[g_{x,y}(\textbf{a})\M(Q_{y}((a_i)_{i=1}^{m}))=q^{2(y-x)(\sum_{i=1}^{m}a_i- m^2)}\prod_{i=1}^{m}\frac{(q^{2(2y+a_i+1)};q^2)_{2i-a_i-1}}{(q^{2(2x+a_i+1)};q^2)_{2i-a_i-1}} \M(Q_{y}((a_i)_{i=1}^{m}),\]
 also satisfies recurrence (\ref{halfeq4}) above. Equivalently,  we need to verify that

\begin{align}\label{halfeq5}
A\cdot \M(Q_{y}(\textbf{a}))\M(Q_{y+1}(((a_{i})_{i=2}^{m-2}+\beta)^{**})&=B\cdot \M(Q_{y+1}((a_{i})_{i=2}^{m})^{**})\M(Q_{y}((a_{i})_{i=1}^{m-2}+\beta))\notag\\
&+C\cdot \M(Q_{y}((a_{i})_{i=2}^{m}+\beta))\M(Q_{y+1}(((a_{i})_{i=1}^{m-2})^{**}),
\end{align}
where
\begin{align}
A=g_{x,y}(\textbf{a})\cdot g_{x+1,y+1}(((a_{i})_{i=2}^{m-2}+\beta)^{**}),
\end{align}
\begin{align}
B=g_{x+1,y+1}((a_{i})_{i=2}^{m})^{**}) \cdot g_{x,y}(((a_{i})_{i=1}^{m-2}+\beta),
\end{align}
\begin{align}
C=g_{x,y}((a_{i})_{i=2}^{m}+\beta) \cdot g_{x+1,y+1}(((a_{i})_{i=1}^{m-2})^{**}).
\end{align}

By definition, one could routinely verify that $A=B=C$. Then (\ref{halfeq5}) reduces to
\begin{align}\label{halfeq6}
 \M(Q_{y}(\textbf{a}))\M(Q_{y+1}(((a_{i})_{i=2}^{m-2}+\beta)^{**})&= \M(Q_{y+1}((a_{i})_{i=2}^{m})^{**})\M(Q_{y}((a_{i})_{i=1}^{m-2}+\beta))\notag\\
&+\M(Q_{y}((a_{i})_{i=1}^{m}+\beta))\M(Q_{y+1}(((a_{i})_{i=1}^{m-2})^{**}).
\end{align}
However, this recurrence follows immediately from recurrence (\ref{halfeq4}) by replacing $x$ by $y$. This finishes our proof.
\end{proof}

Theorem \ref{halfthm2} can be proved in the same manner as Theorem \ref{halfthm1}, using Lemma \ref{proctorlem2}. Even though the lozenges are weighted differently in Theorems \ref{halfthm1} and \ref{halfthm2}, the Kuo condensation works essentially the same as the forced lozenges all have weight 1. We omit the proof of Theorem \ref{halfthm2} here.




\section{Appendix: Proofs of Lemmas \ref{semilem1}--\ref{proctorlem2}}\label{Sec:Appendix}

We first show briefly here the proof of Lemma \ref{semilem1}. The proof of Lemma \ref{semilem2} is essentially similar and will be left as an exercise for the reader.

\begin{figure}\centering
\includegraphics[width=13cm]{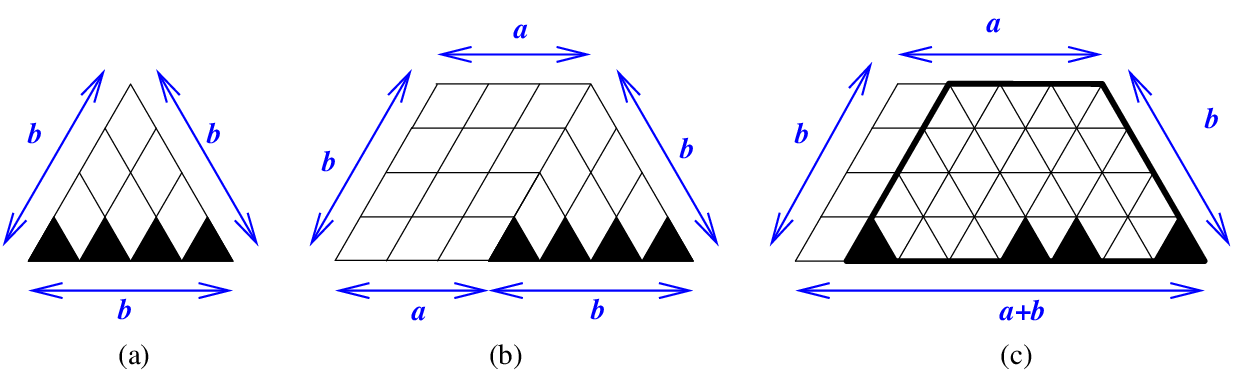}
\caption{Several special cases of the semi-hexagon with dents on the base.}\label{Fig:base2}
\end{figure}

\begin{figure}\centering
\setlength{\unitlength}{3947sp}%
\begingroup\makeatletter\ifx\SetFigFont\undefined%
\gdef\SetFigFont#1#2#3#4#5{%
  \reset@font\fontsize{#1}{#2pt}%
  \fontfamily{#3}\fontseries{#4}\fontshape{#5}%
  \selectfont}%
\fi\endgroup%
\resizebox{!}{5cm}{
\begin{picture}(0,0)%
\includegraphics{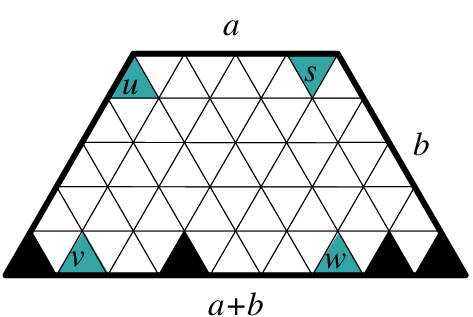}%
\end{picture}%
%
%

\begin{picture}(3770,2532)(5474,-1731)
\put(8875,-1343){\makebox(0,0)[lb]{\smash{{\SetFigFont{12}{14.4}{\rmdefault}{\mddefault}{\itdefault}{\color[rgb]{1,1,1}$s_5$}%
}}}}
\put(8478,-1328){\makebox(0,0)[lb]{\smash{{\SetFigFont{12}{14.4}{\rmdefault}{\mddefault}{\itdefault}{\color[rgb]{1,1,1}$s_4$}%
}}}}
\put(6843,-1336){\makebox(0,0)[lb]{\smash{{\SetFigFont{12}{14.4}{\rmdefault}{\mddefault}{\itdefault}{\color[rgb]{1,1,1}$s_3$}%
}}}}
\put(5605,-1336){\makebox(0,0)[lb]{\smash{{\SetFigFont{12}{14.4}{\rmdefault}{\mddefault}{\itdefault}{\color[rgb]{1,1,1}$s_1$}%
}}}}
\put(6046,-1636){\makebox(0,0)[lb]{\smash{{\SetFigFont{12}{14.4}{\rmdefault}{\mddefault}{\itdefault}{\color[rgb]{0,0,0}$s_2$}%
}}}}
\end{picture}}
\caption{How to apply Kuo condensation to the semi-hexagon with dents on the base.}\label{Fig:base4}
\end{figure}

\begin{figure}\centering
\setlength{\unitlength}{3947sp}%
\begingroup\makeatletter\ifx\SetFigFont\undefined%
\gdef\SetFigFont#1#2#3#4#5{%
  \reset@font\fontsize{#1}{#2pt}%
  \fontfamily{#3}\fontseries{#4}\fontshape{#5}%
  \selectfont}%
\fi\endgroup%
\resizebox{!}{15cm}{
\begin{picture}(0,0)%
\includegraphics{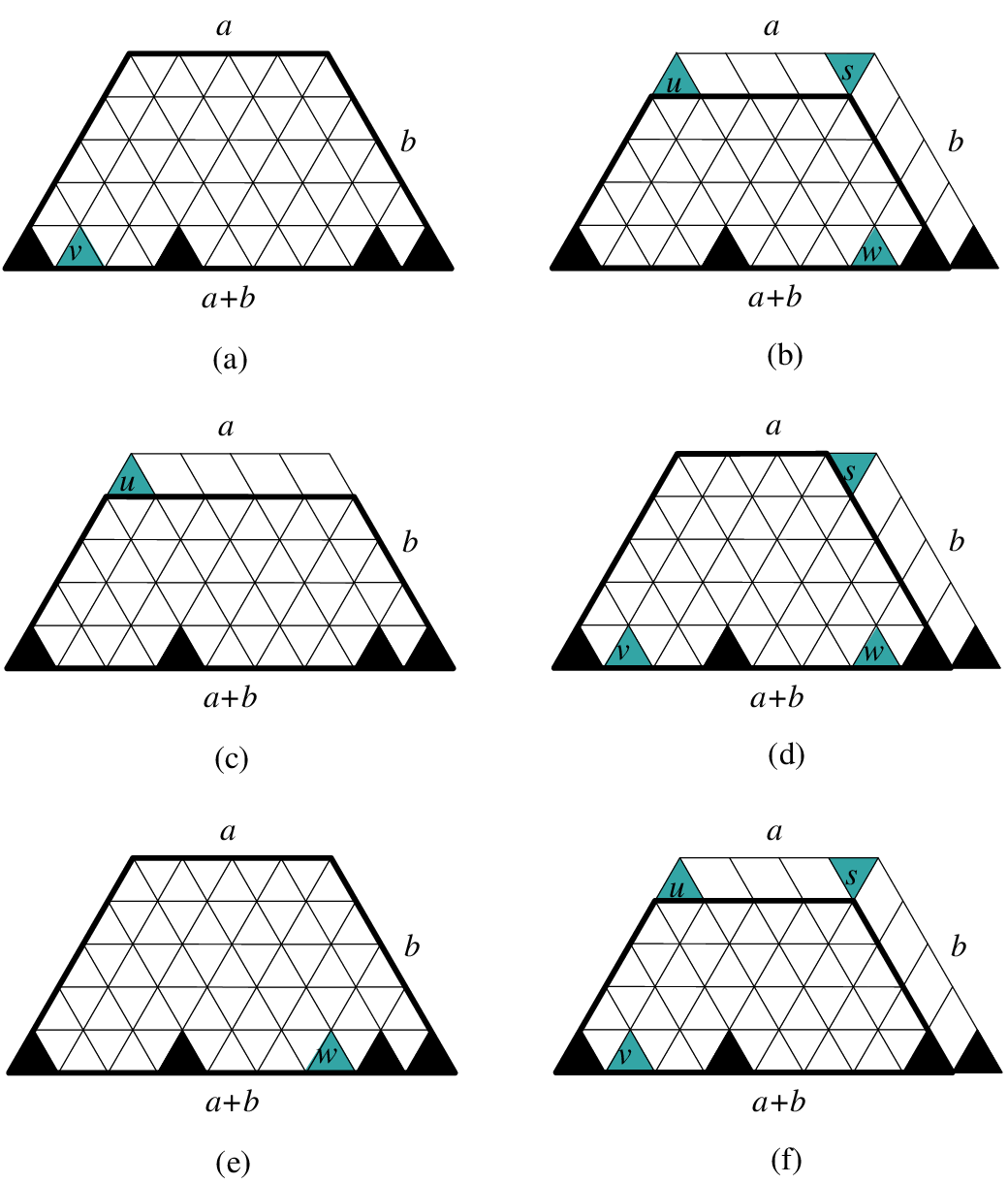}%
\end{picture}%
%
%

\begin{picture}(8307,9766)(959,-8953)
\put(8890,-4642){\makebox(0,0)[lb]{\smash{{\SetFigFont{12}{14.4}{\rmdefault}{\mddefault}{\itdefault}{\color[rgb]{1,1,1}$s_5$}%
}}}}
\put(8493,-4627){\makebox(0,0)[lb]{\smash{{\SetFigFont{12}{14.4}{\rmdefault}{\mddefault}{\itdefault}{\color[rgb]{1,1,1}$s_4$}%
}}}}
\put(6858,-4635){\makebox(0,0)[lb]{\smash{{\SetFigFont{12}{14.4}{\rmdefault}{\mddefault}{\itdefault}{\color[rgb]{1,1,1}$s_3$}%
}}}}
\put(5620,-4635){\makebox(0,0)[lb]{\smash{{\SetFigFont{12}{14.4}{\rmdefault}{\mddefault}{\itdefault}{\color[rgb]{1,1,1}$s_1$}%
}}}}
\put(6873,-7972){\makebox(0,0)[lb]{\smash{{\SetFigFont{12}{14.4}{\rmdefault}{\mddefault}{\itdefault}{\color[rgb]{1,1,1}$s_3$}%
}}}}
\put(8508,-7964){\makebox(0,0)[lb]{\smash{{\SetFigFont{12}{14.4}{\rmdefault}{\mddefault}{\itdefault}{\color[rgb]{1,1,1}$s_4$}%
}}}}
\put(8905,-7979){\makebox(0,0)[lb]{\smash{{\SetFigFont{12}{14.4}{\rmdefault}{\mddefault}{\itdefault}{\color[rgb]{1,1,1}$s_5$}%
}}}}
\put(1090,-1339){\makebox(0,0)[lb]{\smash{{\SetFigFont{12}{14.4}{\rmdefault}{\mddefault}{\itdefault}{\color[rgb]{1,1,1}$s_1$}%
}}}}
\put(8875,-1343){\makebox(0,0)[lb]{\smash{{\SetFigFont{12}{14.4}{\rmdefault}{\mddefault}{\itdefault}{\color[rgb]{1,1,1}$s_5$}%
}}}}
\put(1120,-7975){\makebox(0,0)[lb]{\smash{{\SetFigFont{12}{14.4}{\rmdefault}{\mddefault}{\itdefault}{\color[rgb]{1,1,1}$s_1$}%
}}}}
\put(2358,-7975){\makebox(0,0)[lb]{\smash{{\SetFigFont{12}{14.4}{\rmdefault}{\mddefault}{\itdefault}{\color[rgb]{1,1,1}$s_3$}%
}}}}
\put(3993,-7967){\makebox(0,0)[lb]{\smash{{\SetFigFont{12}{14.4}{\rmdefault}{\mddefault}{\itdefault}{\color[rgb]{1,1,1}$s_4$}%
}}}}
\put(4390,-7982){\makebox(0,0)[lb]{\smash{{\SetFigFont{12}{14.4}{\rmdefault}{\mddefault}{\itdefault}{\color[rgb]{1,1,1}$s_5$}%
}}}}
\put(8478,-1328){\makebox(0,0)[lb]{\smash{{\SetFigFont{12}{14.4}{\rmdefault}{\mddefault}{\itdefault}{\color[rgb]{1,1,1}$s_4$}%
}}}}
\put(6843,-1336){\makebox(0,0)[lb]{\smash{{\SetFigFont{12}{14.4}{\rmdefault}{\mddefault}{\itdefault}{\color[rgb]{1,1,1}$s_3$}%
}}}}
\put(5605,-1336){\makebox(0,0)[lb]{\smash{{\SetFigFont{12}{14.4}{\rmdefault}{\mddefault}{\itdefault}{\color[rgb]{1,1,1}$s_1$}%
}}}}
\put(1105,-4638){\makebox(0,0)[lb]{\smash{{\SetFigFont{12}{14.4}{\rmdefault}{\mddefault}{\itdefault}{\color[rgb]{1,1,1}$s_1$}%
}}}}
\put(2343,-4638){\makebox(0,0)[lb]{\smash{{\SetFigFont{12}{14.4}{\rmdefault}{\mddefault}{\itdefault}{\color[rgb]{1,1,1}$s_3$}%
}}}}
\put(3978,-4630){\makebox(0,0)[lb]{\smash{{\SetFigFont{12}{14.4}{\rmdefault}{\mddefault}{\itdefault}{\color[rgb]{1,1,1}$s_4$}%
}}}}
\put(4375,-4645){\makebox(0,0)[lb]{\smash{{\SetFigFont{12}{14.4}{\rmdefault}{\mddefault}{\itdefault}{\color[rgb]{1,1,1}$s_5$}%
}}}}
\put(4360,-1346){\makebox(0,0)[lb]{\smash{{\SetFigFont{12}{14.4}{\rmdefault}{\mddefault}{\itdefault}{\color[rgb]{1,1,1}$s_5$}%
}}}}
\put(3963,-1331){\makebox(0,0)[lb]{\smash{{\SetFigFont{12}{14.4}{\rmdefault}{\mddefault}{\itdefault}{\color[rgb]{1,1,1}$s_4$}%
}}}}
\put(2328,-1339){\makebox(0,0)[lb]{\smash{{\SetFigFont{12}{14.4}{\rmdefault}{\mddefault}{\itdefault}{\color[rgb]{1,1,1}$s_3$}%
}}}}
\put(5635,-7972){\makebox(0,0)[lb]{\smash{{\SetFigFont{12}{14.4}{\rmdefault}{\mddefault}{\itdefault}{\color[rgb]{1,1,1}$s_1$}%
}}}}
\end{picture}}
\caption{Obtaining recurrence for tiling generating functions  of the semi-hexagons with dents on the base.}\label{Fig:base3}
\end{figure}

\begin{proof}[Proof of Lemma \ref{semilem1}]
We define $t=b-l$, where $l$ is the size of the maximal dent cluster attaching to the lower-right corner of the semi-hexagon $S=S_{a,b}(\textbf{a})$. We prove the lemma by induction on $a+b+t$. 

The base cases are the situations when at least one of the parameters $a,b,t$ equal to $0$. If $b=0$, then our region is degenerated, and the tiling formula is obviously true. If $a=0$ or $t=0$, then our region has only one tiling, as shown in Figure \ref{Fig:base2}(a) or (b), respectively. It is easy to verify the tiling formula in these cases.

 For the induction step we assume that $a,b,t>0$ and that the lemma holds for any semi-hexagons whose sum of $a$-, $b$-, and $t$-parameters is strictly less than $a+b+t$. It is easy to see that we can assume $s_1=1$ and $s_m=x+m$. Otherwise, one can remove forced lozenges from $S$ to obtain a smaller region of the same type (see Figure \ref{Fig:base2}(c) for the case $s_1>1$; the case $s_b<a+b$ is similar by symmetry), and the lemma follows from the induction hypothesis.
 
 Assume that $s_k$ is the first dent position so that there is no dent on right of its (in ptarticular, $s_{k+1}>s_k+1$). We consider the region $R$ obtained from $S$ by  filling the $s_k$-dent. In particular, $R$ has one more up-pointing triangles than down-pointing triangles. We apply  Kuo condensation in Lemma \ref{kuothm1} to the dual graph $G$ of $R$ with the four vertices $u,v,w,s$ as shown in Figure \ref{Fig:base4}. More precisely, the $u$-triangle is the up-pointing unit triangles at the upper-left corner and the $s$-triangle is the down-pointing unit triangle at the upper-right corner. The $v$-triangle is at the position of $\alpha=s_k$ and the $w$ position is at the position $\beta=s_{b-t+1}-1$. Considering the removal of forced lozenges as in Figure  \ref{Fig:base3}, we get the following recurrence
\begin{align}
\M(S_{a,b}(\textbf{s}))\M(S_{a,b-1}((s_i)_{i=1}^{b-1}-\alpha+\beta))&=\M(S_{a+1,b-1}(\textbf{s}-\alpha))\M(S_{a-1,b}((s_i)_{i=1}^{b-1}+\beta))\notag\\
&+\M(S_{a,b}(\textbf{s}-\alpha+\beta))\M(S_{a,b-1}((s_i)_{i=1}^{b-1})).
\end{align}
(The weights of forced lozenges cancel out.) Then the lemma follows from the induction principle.

\end{proof}

Next, we show the proof of Lemma \ref{proctorlem} (Lemma \ref{proctorlem2} could be proved in the same manner).

\begin{figure}\centering
\includegraphics[width=9.5cm]{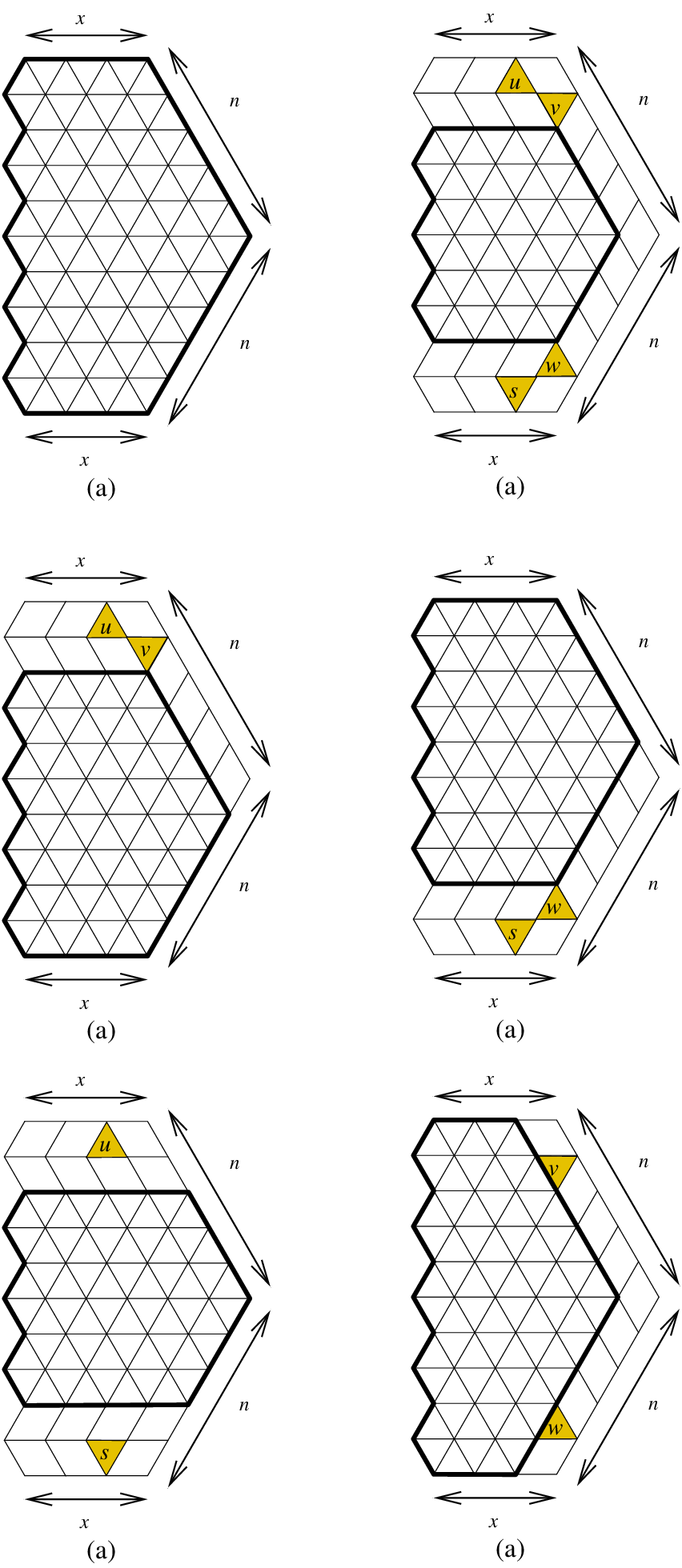}
\caption{Obtaining recurrence for tiling generating functions of halved hexagons.}\label{Fig:halved2}
\end{figure}

\begin{proof}[Proof of Lemma \ref{proctorlem}]
We prove by induction on $x+n$.  The base cases are the situations $x=0$ and $n\leq 1$.

When $x=0$, the region $P_{x,n}$ has only one tiling consisting of vertical lozenges; when $n=0$, then the region is degenerated. It is easy to verify our identity in these cases. If $n=1$, then our region become a  hexagon of side-lengths $x,1,1,x,1,1$. It is easy to see that the hexagon has exactly $x$ tilings, each consists of one  vertical   lozenge, $x$ left lozenges, and $x$ right lozenges. One could calculate the tilling generating function and then easily verify the identity in this case.

For the induction step, we assume that $x>0$ and $n>1$ and that the tiling formula holds for any  halved hexagons whose sum of $x$- and $n$-parameters is strictly less than $x+n$. Applying Kuo condensation in Lemma \ref{kuothm0} to the dual graph $G$ of the halved hexagon $P_{x,n}$, as shown in Figure \ref{Fig:halved2}. We get the following recurrence:
\begin{align}
\M(P_{x,n})\M(P_{x,n-2})=\left(\frac{q^{2x+n}+q^{-2x-n}}{2}\right)\M(P_{x,n-1})\M(P_{x,n-1})+\M(P_{x+1,n-2})\M(P_{x-1,n}).
\end{align}
We note that the factor $\frac{q^{2x+n}+q^{-2x-n}}{2}$ comes from the weight of the right most  vertical lozenge; the weights of all other forced lozenges cancel out. Then  the lemma follows from the induction principle.
\end{proof}

\bibliographystyle{plain}
\bibliography{Semitwodent}

\end{document}